
\documentclass[12pt]{amsart}
\usepackage{graphicx}
\usepackage{amsmath, color}
\usepackage{amscd, amstext}
\usepackage{amsfonts}
\usepackage{amssymb}
\usepackage{amsthm}

\numberwithin{equation}{section}

\setcounter{MaxMatrixCols}{30}
\setcounter{secnumdepth}{2}

\setcounter{tocdepth}{1}

\let\secsymb=\S

\newtheorem{theorem}{Theorem}[section]
\newtheorem{lemma}[theorem]{Lemma}
\newtheorem{proposition}[theorem]{Proposition}

\newtheorem{corollary}[theorem]{Corollary}

\theoremstyle{plain}

\newtheorem{prop}[theorem]{Proposition}
\newtheorem{lem}[theorem]{Lemma}
\newtheorem*{lem*}{Lemma}

\theoremstyle{definition}
\newtheorem{rem}[theorem]{Remark}

\newtheorem{eg}[theorem]{Example}

\newtheorem{remarks}[theorem]{Remarks}

\theoremstyle{definition}
\newtheorem{example}[theorem]{Example}
\newtheorem{remark}[theorem]{Remark}


\newcommand{\be}{\begin{equation}}
\newcommand{\ee}{\end{equation}}
\newcommand{\bes}{\begin{equation*}}
\newcommand{\ees}{\end{equation*}}

\newcommand{\cC}{\mathcal{C}}

\newcommand{\cH}{\mathcal{H}}

\newcommand{\cL}{\mathcal{L}}
\newcommand{\cM}{\mathcal{M}}

\newcommand{\cF}{\mathcal{F}}

\newcommand{\cA}{\mathcal{A}}

\newcommand{\cR}{\mathcal{R}}

\newcommand{\lel}{\left\langle}
\newcommand{\rir}{\right\rangle}
\newcommand{\mb}[1]{\mathbb{#1}}


\newcommand{\Aut}{\operatorname{Aut}}

\newcommand{\dist}{\operatorname{dist}}

\newcommand{\Lat}{\operatorname{Lat}}
\newcommand{\Mult}{\operatorname{Mult}}

\newcommand{\spn}{\operatorname{span}}
\newcommand{\id}{\operatorname{id}}
\newcommand{\lip}{\langle}
\newcommand{\rip}{\rangle}
\newcommand{\ip}[1]{\lip #1 \rip}
\newcommand{\bip}[1]{\big\lip #1 \big\rip}

\renewcommand{\phi}{\phi}
\newcommand{\wot}{\textsc{wot}}
\newcommand{\ol}{\overline}

\newcommand{\re}{\operatorname{Re}}


\newcommand{\bB}{{\mathbb{B}}}
\newcommand{\bC}{{\mathbb{C}}}
\newcommand{\bD}{{\mathbb{D}}}

\newcommand{\bT}{{\mathbb{T}}}

\newcommand{\ba}{{\mathbf{a}}}

  \newcommand{\A}{{\mathcal{A}}}
  \newcommand{\B}{{\mathcal{B}}}

  \newcommand{\F}{{\mathcal{F}}}
  
\renewcommand{\H}{{\mathcal{H}}}

  \newcommand{\M}{{\mathcal{M}}}

\renewcommand{\S}{{\mathcal{S}}}

\newcommand{\rC}{{\mathrm{C}}}

\newcommand{\ep}{\varepsilon}
\renewcommand{\phi}{\varphi}
\newcommand{\upchi}{{\raise.35ex\hbox{$\chi$}}}



\newcommand{\qand}{\quad\text{and}\quad}

\newcommand{\qfor}{\quad\text{for}\ }
\newcommand{\qforal}{\quad\text{for all}\ }

\newcommand{\AND}{\text{ and }}

\newcommand{\FORAL}{\text{ for all }}
\newcommand{\IF}{\text{ if }}

\newcommand{\AD}{\mathrm{A}(\mathbb{D})}

\newcommand{\Hinf}{H^\infty}

\begin{document}

\title{Operator algebras for analytic varieties}

\author[K.R. Davidson]{Kenneth R. Davidson}
\address{Department of Pure Mathematics, University of Waterloo.}
\email{krdavids@math.uwaterloo.ca}

\author[C. Ramsey]{Christopher Ramsey}
\email{ciramsey@math.uwaterloo.ca}

\author[O.M. Shalit]{Orr Moshe Shalit}
\address{Department of Mathematics, Ben-Gurion University of the Negev.}
\email{oshalit@math.bgu.ac.il}
\thanks{First and second authors partially supported by NSERC, Canada. \\ \indent
Third author supported by ISF Grant no.
474/12 and by EU FP7/2007-2013 Grant no. 321749}

\dedicatory{Dedicated to the memory of William B. Arveson}

\begin{abstract}
We study the isomorphism problem for the multiplier algebras of irreducible 
complete Pick kernels. 
These are precisely the restrictions $\cM_V$ of the multiplier algebra 
$\cM$ of Drury-Arveson space to a holomorphic subvariety $V$ of the unit ball $\mb{B}_d$.

We find that $\cM_V$ is completely isometrically isomorphic to $\cM_W$ 
if and only if $W$ is the image of $V$ under a biholomorphic automorphism of the ball. 
In this case, the isomorphism is unitarily implemented.
This is then strengthend to show that, when $d<\infty$, every isometric isomorphism is completely isometric.

The problem of characterizing when two such algebras are 
(algebraically) isomorphic is also studied. 
When $V$ and $W$ are each a finite union of irreducible varieties and
a discrete variety in $\mb{B}_d$ with $d<\infty$, then an isomorphism between $\cM_V$ and $\cM_W$ 
determines a biholomorphism (with multiplier coordinates) between the varieties;
and the isomorphism is composition with this function. 
These maps are automatically weak-$*$ continuous.

We present a number of examples showing that the converse fails in several ways. 
We discuss several special cases in which the converse does hold---particularly,
smooth curves and Blaschke sequences.

We also discuss the norm closed algebras associated to a variety,
and point out some of the differences.
\end{abstract}

\subjclass[2010]{47L30, 47A13, 46E22}
\keywords{Non-selfadjoint operator algebras, reproducing kernel Hilbert spaces}
\maketitle

\section{Introduction}

In this paper, we study operator algebras of multipliers on reproducing kernel Hilbert
spaces associated to analytic varieties in the unit ball of $\mb{C}^d$.
The model is the multiplier algebra $\cM_d$ of the Drury-Arveson space,
a.k.a.\ symmetric Fock space. 
The generators, multiplication by coordinate functions, form a $d$-tuple which
is universal for commuting row contractions \cite{Arv98}.
The Hilbert space is a reproducing kernel Hilbert space which is a complete
Nevanlinna-Pick kernel \cite{DavPittsPick}; and in fact when $d=\infty$ is the
universal complete NP kernel \cite{AM00}.
For these reasons, this space and its multiplier algebra have received a lot
of attention in recent years.

In this paper, we are concerned with multipliers on subspaces of Drury-Arveson space
spanned by the kernel functions they contain. By results in \cite{DavPitts2}, these
operator algebras are also complete quotients of $\cM_d$ by \wot-closed ideals.
The zero set is always an analytic variety $V$ in the ball, and the multiplier algebra
$\cM_V$ is a space of holomorphic functions on $V$. 

The main question that we address is when two such algebras are isomorphic.
We find first that two such algebras $\cM_V$ and $\cM_W$ 
are completely isometrically isomorphic if and only if there is a biholomorphic
automorphism of the ball that carries $V$ onto $W$.
In this case, the isomorphism is unitarily implemented.

The question of algebraic isomorphism (which implies continuous algebraic isomorphism
because the algebras are semisimple) is much more subtle, and there are results only for the case $d<\infty$. 
In an earlier paper \cite{DRS}, the authors considered the case of homogeneous varieties. 
We showed, under some extra assumptions on the varieties, that the algebras are 
isomorphic if and only if there is a biholomorphic map of one variety onto the other. 
In a recent paper, Michael Hartz \cite{Hartz} was able to establish this result 
in complete generality.

In this paper, we establish a special case of what should be the easy direction, showing that 
an isomorphism determines a biholomorphism of $V$ onto $W$. This turns
out to be rather subtle, and we need to restrict our attention to the case in which
the varieties are a finite union of irreducible varieties and a discrete variety. 
The isomorphism is just composition with this biholomorphism.

These methods also allow us to show that an isometric isomorphism 
is just composition with a conformal automorphism of the ball, and thus
is completely isometric and unitarily implemented.

Some counterexamples show that a biholomorphism between varieties does not
always yield an isomorphism of the multiplier algebras. 
We discuss a number of cases where we can establish the desired converse.

Arias and Latr\'emoli\`ere \cite{AriasLat10a} have an interesting paper in which 
they study certain operator algebras of this type in the
case where the variety is a countable discrete subset of the unit disc 
which is the orbit of a point under the action of a Fuchsian group. 
They establish results akin to ours
in the completely isometric case using rather different methods.

\section{Reproducing kernel Hilbert spaces associated to analytic varieties}

\subsection*{Basic notation}

Let  $H^2_d$ be Drury-Arveson space (see \cite{Arv98}). 
$H^2_d$ is the reproducing kernel Hilbert space on $\mb{B}_d$, 
the unit ball of $\mb{C}^d$, with kernel functions
\bes
k_\lambda (z) = \frac{1}{1-\lel z, \lambda\rir} \qfor z,\lambda \in \mb{B}_d.
\ees
We also consider the case $d = \infty$, and then $\mb{C}^d$ is understood as $\ell^2$.
We denote by $\cM_d$ the multiplier algebra $\textrm{Mult}(H^2_d)$ of $H^2_d$. 

Let $Z_1, \ldots, Z_d$ denote multiplication by the coordinate functions on $H^2_d$, 
given by 
\[
 (Z_i h) (z) = z_i h(z) \qfor i=1, \ldots, d.
\]
Let $\cA_d$ denote the norm closed algebra generated by $I, Z_1, \ldots, Z_d$. 
By \cite[Theorem 6.2]{Arv98}, $\cA_d$ is the universal (norm-closed) unital operator algebra generated by a commuting row contraction (see also \cite{Popescu99}).

We write $\cF = \cF(E)$ for the full Fock space
\[
\cF = \mb{C} \oplus E \oplus (E \otimes E) \oplus (E \otimes E \otimes E) \oplus \ldots ,
\] 
where $E$ is a $d$-dimensional Hilbert space.
Fix an orthonormal basis $\{e_1, \ldots, e_d\}$ for $E$. 
On $\cF$, we have the natural shift operators $L_1, \ldots, L_d$ given by 
\[
 L_j e_{i_1} \otimes \cdots \otimes e_{i_k} = 
 e_j \otimes e_{i_1} \otimes \cdots \otimes e_{i_k} 
 \qfor 1 \le j \le d.
\]
The \emph{non-commutative analytic Toeplitz algebra} $\cL_d$ is defined to be the 
unital \wot-closed algebra generated by $L_1, \ldots, L_d$. 

When $d$ is understood, we may write $\cA, \cM, \cL$ and $H^2$ 
instead of $\cA_d, \cM_d, \cL_d$ and $H^2_d$.

\subsection*{The RKHS of a variety}

For our purposes, an \emph{analytic variety} will be understood as 
the common zero set of a family of $H^2$ functions.
If $F$ is a subset of $H^2_d$, considered as functions on $\mb{B}_d$, let 
\[
 V(F) := \{ \lambda \in \mb{B}_d: f(\lambda) = 0 \FORAL f \in F \} .
\]
Propositions \ref{prop:zero_set} and \ref{prop:F_S_F_V} below both show that
there is not much loss of generality in taking this as our definition in this context. 
In particular, if $f\in \cM_d$, then $M_f 1 = f$ is a function in $H^2_d$.
So the zero set of a set of multipliers is an analytic variety.

Define 
\[
 J_V = \{f \in \cM : f(\lambda) = 0 \FORAL \lambda \in V  \} .
\]
Observe that $J_V$ is a \wot-closed ideal in $\cM$.

\begin{proposition} \label{prop:zero_set}
Let $F$ be a subset of $H^2$, and let $V = V(F)$. 
Then 
\[
 V = V(J_V) = \{ \lambda \in \mb{B}_d : f(\lambda) = 0 \FORAL f \in J_V  \}.
\]
\end{proposition}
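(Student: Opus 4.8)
The plan is to establish the two inclusions $V \subseteq V(J_V)$ and $V(J_V) \subseteq V$ separately, noting that the reverse containment $V(J_V) \supseteq V$ is the trivial one. Indeed, every $f \in J_V$ vanishes on $V$ by definition, so any $\lambda \in V$ satisfies $f(\lambda) = 0$ for all $f \in J_V$, giving $V \subseteq V(J_V)$ immediately. The substance of the proposition is therefore the opposite inclusion: if $\lambda \in \mb{B}_d$ is such that $f(\lambda) = 0$ for every $f \in J_V$, then $\lambda \in V = V(F)$.

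To prove $V(J_V) \subseteq V$, I would argue contrapositively: suppose $\lambda_0 \in \mb{B}_d \setminus V(F)$, so there is some $f \in F \subseteq H^2$ with $f(\lambda_0) \neq 0$. The obstacle is that $f$ is only a Hilbert space function, not a multiplier, so $f$ itself need not lie in $J_V$ (and $F$ need not consist of multipliers). The fix is to produce from $f$ an actual multiplier that still vanishes on $V$ but not at $\lambda_0$. Since $\cM = \cM_d$ acts on $H^2$, and in particular since the coordinate multipliers generate a dense-enough supply of functions, one can separate points of the ball by multipliers — concretely, for the given $\lambda_0$ there is a multiplier $g$ (for instance, an affine polynomial $g(z) = 1 - \langle z, \lambda_0\rangle$, or more flexibly any polynomial) with $g(\lambda_0) \neq 0$. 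But that alone doesn't control vanishing on $V$. The right move is: the evaluation functional at $\lambda_0$ is $\langle \cdot\,, k_{\lambda_0}\rangle$, and $f(\lambda_0) = \langle f, k_{\lambda_0}\rangle \neq 0$. I want a multiplier $h$ with $h|_V = 0$ and $h(\lambda_0) \neq 0$; equivalently I want to show $k_{\lambda_0}$ is not in the closed span of $\{k_\mu : \mu \in V\}$ would be too strong. Instead, observe that $V(F)$ as defined is the zero set of $H^2$ functions, and one shows $J_V$ already cuts out $V$ because $\cM$ is rich: given $\lambda_0 \notin V$, pick $\mu \in V$... no — rather, use that polynomials are multipliers and that the ideal of polynomials vanishing on the (relatively closed analytic) set $V$ already has $V$ as its zero set in $\mb{B}_d$, by the local structure of analytic varieties; since $V = V(F)$ is cut out by holomorphic functions, it is an analytic subvariety of $\mb{B}_d$, and the Nullstellensatz-type fact that an analytic variety equals the common zero set of the holomorphic functions (here, even polynomials suffice locally) vanishing on it gives a holomorphic $h$ defined near $\lambda_0$ with $h|_V \equiv 0$, $h(\lambda_0) \neq 0$; multiplying by a suitable bump/Blaschke-type multiplier or truncating, one obtains an honest element of $\cM$ with the same separation property. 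This element lies in $J_V$ and does not vanish at $\lambda_0$, so $\lambda_0 \notin V(J_V)$.

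The main obstacle, then, is the passage from "holomorphic function vanishing on $V$" to "multiplier in $\cM$ vanishing on $V$": one must verify that $J_V$ is large enough to detect all of $V$'s complement. I expect the cleanest route is to use that $\cM$ contains all polynomials, that an analytic variety in $\mb{B}_d$ is locally the zero set of finitely many holomorphic functions, and — crucially — the fact (to be recorded, perhaps as the forthcoming Proposition~\ref{prop:F_S_F_V}) that the $H^2$-closure or multiplier-closure operations do not change the zero set. With that in hand the argument above closes: $V = V(F) \supseteq V(J_V) \supseteq V$, where the last inclusion is the trivial direction and the middle inclusion is what the separation construction provides. Hence $V = V(J_V)$, as claimed.
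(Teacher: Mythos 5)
Your identification of the trivial inclusion $V \subseteq V(J_V)$ and of the real issue --- producing, for each $\lambda_0 \notin V$, a multiplier $h \in J_V$ with $h(\lambda_0) \neq 0$ --- is correct, but your proposed construction of such an $h$ does not work, and that construction is exactly where the content of the proposition lies. The paper closes the gap by citing \cite[Theorem 9.27]{AM02}: the zero set of a single $H^2$ function is a \emph{weak zero set} for $\cM$, i.e.\ an intersection of zero sets of multipliers. Given that, the argument is immediate: $V = V(F)$ is an intersection of zero sets of $H^2$ functions, hence an intersection of zero sets of multipliers, so $V = V(S)$ for some $S \subseteq \cM$; since $S \subseteq J_V$, one gets $V = V(S) \supseteq V(J_V)$. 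This is a genuinely nontrivial property of complete Nevanlinna--Pick spaces and cannot be replaced by the classical local theory of analytic sets.

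Concretely, your plan to take a holomorphic function $h$ defined near $\lambda_0$ with $h|_V \equiv 0$ and $h(\lambda_0) \neq 0$ and then ``multiply by a suitable bump\ldots or truncate'' to obtain an element of $\cM$ fails: a bump function is not holomorphic, so the product leaves the holomorphic category, and there is no truncation procedure that turns a germ of a holomorphic function into a global multiplier. Even if you produced a global \emph{bounded} holomorphic function vanishing on $V$ but not at $\lambda_0$ (itself a nontrivial bounded Nullstellensatz), that would still not suffice, because for $d \ge 2$ the multiplier algebra $\cM_d$ is a proper subalgebra of $H^\infty(\mb{B}_d)$ and membership in it is a strictly stronger condition. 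Appealing to Proposition~\ref{prop:F_S_F_V} is also circular here, since its proof rests on the very same Agler--McCarthy citation. That theorem is the missing ingredient, and with it the whole proof is three lines.
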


\begin{proof}
Obviously $V \subseteq V(J_V)$. 
For the other inclusion, recall that \cite[Theorem 9.27]{AM02} states that a zero set 
of an $H^2$ function is a weak zero set for $\cM$ (i.e.\ the intersection of zero sets of functions 
in $\cM$). 
Since $V$ is the intersection of zero sets for $H^2$, it is a weak zero set for $\cM$; 
i.e., there exists a set $S \subseteq \cM$ such that $V = V(S)$. 
Now, $S \subseteq J_V$, so $V = V(S) \supseteq V(J_V)$.
\end{proof}

Given the analytic variety $V$, we define a subspace of $H^2_d$ by
\[
 \cF_V = \ol{\spn}\{k_\lambda : \lambda \in V\}.
\]
The Hilbert space $\cF_V$ is naturally a reproducing kernel Hilbert space of 
functions on the variety $V$.
One could also consider spaces of the form 
$\cF_S = \ol{\spn}\{k_\lambda : \lambda \in S\}$ 
where $S$ is an arbitrary subset of the ball. 
The following proposition shows that there is no loss of generality in considering 
only analytic varieties generated by $H^2$ functions. 

\begin{proposition}\label{prop:F_S_F_V}
Let $S \subseteq \mb{B}_d$.
Let $J_S$ denote the set of multipliers vanishing on $S$, and let $I_S$ 
denote the set of all $H^2$ functions that vanish on $S$. 
Then 
\[ \cF_S = \cF_{V(I_S)} = \cF_{V(J_S)} . \]
\end{proposition}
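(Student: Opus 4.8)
The plan is to show the two equalities $\cF_S = \cF_{V(I_S)}$ and $\cF_{V(I_S)} = \cF_{V(J_S)}$ by comparing the spanning sets of kernel functions. Since $S \subseteq V(I_S)$ and $S \subseteq V(J_S)$ trivially (every function in $I_S$ or $J_S$ vanishes on $S$ by definition), the inclusions $\cF_S \subseteq \cF_{V(I_S)}$ and $\cF_S \subseteq \cF_{V(J_S)}$ are immediate. Moreover $J_S \subseteq I_S$ — every multiplier vanishing on $S$ is in particular an $H^2$ function vanishing on $S$ — so $V(I_S) \subseteq V(J_S)$, hence $\cF_{V(I_S)} \subseteq \cF_{V(J_S)}$. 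Thus it suffices to prove the single reverse inclusion $\cF_{V(J_S)} \subseteq \cF_S$, i.e.\ that $k_\lambda \in \cF_S$ for every $\lambda \in V(J_S)$.

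The key step is the following duality/orthogonality observation: for $\lambda \in \mb{B}_d$, one has $k_\lambda \in \cF_S$ if and only if $k_\lambda \perp \cF_S^\perp$, and $\cF_S^\perp = \{h \in H^2 : h(\mu) = 0 \ \forall \mu \in S\} = I_S$, since $\langle h, k_\mu \rangle = h(\mu)$. So $\cF_S^\perp = I_S$, and we must show: if $\lambda \in V(J_S)$ then $h(\lambda) = 0$ for all $h \in I_S$, i.e.\ $V(J_S) \subseteq V(I_S)$ (which combined with the reverse inclusion above gives $V(I_S) = V(J_S)$, and then $k_\lambda \perp I_S = \cF_S^\perp$, so $k_\lambda \in \cF_S$). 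To get $V(J_S) \subseteq V(I_S)$, I would invoke the same result used in Proposition~\ref{prop:zero_set}, namely \cite[Theorem 9.27]{AM02}: the zero set $V(I_S)$ of the family $I_S$ of $H^2$ functions is a weak zero set for $\cM$, so there is $T \subseteq \cM$ with $V(I_S) = V(T)$; since each $t \in T$ vanishes on $V(I_S) \supseteq S$ we get $T \subseteq J_S$, whence $V(J_S) \subseteq V(T) = V(I_S)$, as desired.

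Assembling the pieces: we have shown $V(J_S) = V(I_S)$, so $\cF_{V(J_S)} = \cF_{V(I_S)}$; and for $\lambda$ in this common set, $k_\lambda$ is orthogonal to $I_S = \cF_S^\perp$, hence $k_\lambda \in \cF_S$, giving $\cF_{V(I_S)} \subseteq \cF_S$. Combined with the trivial inclusion $\cF_S \subseteq \cF_{V(I_S)}$ this closes the loop and yields $\cF_S = \cF_{V(I_S)} = \cF_{V(J_S)}$.

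The main obstacle is really just bookkeeping the direction of the inclusions correctly — the mathematical content is concentrated in the single appeal to \cite[Theorem 9.27]{AM02} (that zero sets of $H^2$ functions are weak zero sets for $\cM$), exactly as in the proof of Proposition~\ref{prop:zero_set}. Everything else is the elementary fact that $\cF_S^\perp$ consists precisely of the $H^2$ functions vanishing on $S$, which follows directly from the reproducing property $\langle h, k_\mu\rangle = h(\mu)$.
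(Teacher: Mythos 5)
Your proof is correct and follows essentially the same route as the paper: the identification $\cF_S^\perp = I_S$ via the reproducing property handles the first equality (the paper phrases it by showing $\cF_S^\perp \subseteq \cF_{V(I_S)}^\perp$, which is the dual of your statement that $k_\lambda \perp I_S$ for $\lambda \in V(I_S)$), and the appeal to \cite[Theorem 9.27]{AM02} exactly as in Proposition~\ref{prop:zero_set} handles the multiplier zero set. The only difference is that you spell out the second step, which the paper leaves as a one-line remark.
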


\begin{proof}
Clearly $\cF_S \subseteq \cF_{V(I_S)}$. 
Let $f \in \cF_S^\perp$. 
Then $f(x) = 0$ for all $x \in S$; so $f \in I_S$.
Hence by definition, $f(z) = 0$ for all $z \in V = V(I_S)$; 
whence $f \in \cF_{V(I_S)}^\perp$. 
Therefore $\cF_S = \cF_{V(I_S)}$.
The extension to zero sets of multipliers follows again from \cite[Theorem 9.27]{AM02}.
\end{proof}

\begin{remarks}
In general, it is not true that $V(I_S)$ is equal to the smallest analytic variety 
\textit{in the classical sense} containing $S \subseteq \mb{B}_d$. 
In fact, by Weierstrass's Factorization Theorem, 
every discrete set $Z = \{z_n\}_{n=1}^\infty$ 
in $\mb{D}$ is the zero set of some holomorphic function on $\mb{D}$. 
However, if the sequence $Z$ is not a Blaschke sequence, then there is no nonzero 
function in $H^2$ that vanishes on all of it. 
So here $I_Z = \{0\}$, and therefore $V(I_Z) = \mb{D}$. 

One very nice property of classical varieties is that the definition is local.
Because our functions must be multipliers, a strictly local definition does not
seem to be possible. However one could consider the following variant:
$V$ is a variety if for each point $\lambda\in\mb B_d$, there is an $\ep>0$
and a finite set $f_1,\dots,f_n$ in $\cM_d$ 
so that 
\[ b_\ep(\lambda) \cap V = \{ z \in b_\ep(\lambda) : 0 = f_1(z)= \dots = f_n(z) \} .\]
We do not know if every variety of this type is actually the intersection of zero sets.

In particular, we will say that a variety $V$ is {\em irreducible} if for any regular point $\lambda \in V$,
the intersection of zero sets of all multipliers vanishing on a small neighbourhood 
$V\cap b_\ep(\lambda)$ is exactly $V$. However we do not know whether an
irreducible variety is connected. A local definition of our varieties would presumably
clear up this issue. 
\end{remarks}

\subsection*{Ideals and invariant subspaces}
We will apply some results of Davidson-Pitts \cite[Theorem 2.1]{DavPitts2} and
\cite[Corollary 2.3]{DavPittsPick} to the commutative context.

In the first paper, a bijective correspondence is established between the collection of
\wot-closed ideals $J$ of $\cL_d$ and the complete lattice of subspaces which are 
invariant for both $\cL_d$ and its commutant $\cR_d$, the algebra of right multipliers.
The pairing is just the map taking an ideal $J$ to its closed range $\mu(J):=\ol{J \F}$.
The inverse map takes a subspace $N$ to the ideal $J$ of elements with range
contained in $N$.

In \cite[Theorem 2.1]{DavPittsPick}, it is shown that the quotient algebra
$\cL_d/J$ is completely isometrically isomorphic and \wot-homeomorphic 
to the compression of $\cL_d$ to $\mu(J)^\perp$.
In particular, \cite[Corollary 2.3]{DavPittsPick} shows that the multiplier algebra $\cM_d$
is completely isometrically isomorphic to $\cL_d/\cC$, where $\cC$ is the
\wot-closure of the commutator ideal of $\cL_d$. 
In particular, $\mu(\cC)^\perp = H^2_d$.

It is easy to see that there is a bijective correspondence between
the lattice of \wot-closed ideals $\operatorname{Id}(\cM_d)$ of $\cM_d$ and 
the \wot-closed ideals of $\cL_d$ which contain $\cC$.
Similarly there is a bijective correspondence between invariant subspaces $N$ of 
$\cM_d$ and invariant subspaces of $\cL_d$ which contain $\mu(\cC) = H_d^{2\perp}$.
Since the algebra $\cM_d$ is abelian, it is also the quotient of $\cR_d$ by its
commutator ideal, which also has range $H_d^{2\perp}$.
So the subspace $N \oplus H_d^{2\perp}$ is invariant for both $\cL_d$ and $\cR_d$.
Therefore an application of \cite[Theorem 2.1]{DavPitts2} yields the following
consequence:

\begin{theorem} \label{thm:DavPitts2}
Define the map $\alpha : \operatorname{Id}(\cM_d) \to \Lat (\cM_d)$ by 
$\alpha(J) = \ol{J 1}$.
Then $\alpha$ is a complete lattice isomorphism whose inverse $\beta$ is given by
\[ \beta (N) = \{ f \in \cM_d: f \cdot 1 \in N\}. \]
\end{theorem}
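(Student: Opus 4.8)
The plan is to transport the Davidson--Pitts lattice correspondence of \cite[Theorem 2.1]{DavPitts2} through the complete isometric, \wot-homeomorphic identification $\cM_d \cong \cL_d/\cC$ of \cite[Corollary 2.3]{DavPittsPick}, under which $\cM_d$ acts on $H^2_d = \mu(\cC)^\perp \subseteq \cF$ by compression. Write $q\colon \cL_d \to \cM_d$ for the quotient map, so $q(L) = P_{H^2_d} L|_{H^2_d}$. As noted above, $J \mapsto q^{-1}(J)$ is an order isomorphism from $\operatorname{Id}(\cM_d)$ onto the lattice of \wot-closed ideals of $\cL_d$ containing $\cC$, and $N \mapsto N \oplus \mu(\cC)$ is an order isomorphism from $\Lat(\cM_d)$ onto the lattice of $\cL_d$-invariant subspaces of $\cF$ containing $\mu(\cC) = H_d^{2\perp}$; moreover each such subspace is automatically $\cR_d$-invariant, since $\cM_d$ is also the quotient of $\cR_d$ by a commutator ideal with range $\mu(\cC)$. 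All four lattices in sight are complete (they are closed under arbitrary intersection), so these order isomorphisms are automatically complete lattice isomorphisms.

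By \cite[Theorem 2.1]{DavPitts2}, $\widetilde J \mapsto \mu(\widetilde J) = \ol{\widetilde J\cF}$ is a complete lattice isomorphism from the \wot-closed ideals of $\cL_d$ onto the subspaces invariant for both $\cL_d$ and $\cR_d$, with inverse $M \mapsto \{L \in \cL_d : \ol{L\cF} \subseteq M\}$. Restricting to ideals containing $\cC$ yields a complete lattice isomorphism onto the joint-invariant subspaces containing $\mu(\cC)$. Composing with the two identifications of the previous paragraph gives a complete lattice isomorphism $\operatorname{Id}(\cM_d) \to \Lat(\cM_d)$, so it only remains to check that this composite is $\alpha$ and its inverse is $\beta$.

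For this, fix a \wot-closed ideal $J \subseteq \cM_d$ and set $\widetilde J = q^{-1}(J)$. Since $\widetilde J \supseteq \cC$ we have $\ol{\widetilde J\cF} \supseteq \ol{\cC\cF} = H_d^{2\perp}$, so $\ol{\widetilde J\cF} = N' \oplus H_d^{2\perp}$ with $N' = P_{H^2_d}\,\ol{\widetilde J\cF} \subseteq H^2_d$. Because $\mu(\cC) = H_d^{2\perp}$ is $\cL_d$-invariant, each $L \in \widetilde J$ is block lower-triangular for $\cF = H^2_d \oplus H_d^{2\perp}$ with $(1,1)$-entry $q(L)$, so $P_{H^2_d}\widetilde J\cF = JH^2_d$ and hence $N' = \ol{JH^2_d}$. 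Now the constant function $1$ is cyclic for $\cM_d$ (already the coordinate multipliers generate the polynomials from it, and these are dense in $H^2_d$), so $\ol{JH^2_d} = \ol{J\cM_d\,1} = \ol{J\,1}$ since $J$ is an ideal; that is, $\mu(\widetilde J) = \alpha(J) \oplus \mu(\cC)$, exactly the image of $\alpha(J)$ under $N \mapsto N\oplus\mu(\cC)$. Reversing the computation, for $N \in \Lat(\cM_d)$ a multiplier $f = q(L)$ satisfies $\ol{L\cF} \subseteq N\oplus H_d^{2\perp}$ if and only if $\ol{\operatorname{ran} f} = \ol{\cM_d\,f\,1} \subseteq N$, and since $N$ is $\cM_d$-invariant this holds if and only if $f\cdot 1 \in N$; so the inverse correspondence is $\beta(N) = \{f \in \cM_d : f\cdot 1 \in N\}$.

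I expect no real obstacle here: the substantive content is imported from \cite{DavPitts2} and \cite{DavPittsPick}. The only step demanding care is the bookkeeping in the third paragraph---pinning down that $\mu(q^{-1}(J))$ equals $\ol{J\,1}\oplus H_d^{2\perp}$ and not some a priori different subspace---which is precisely where one uses the $\cL_d$-invariance of $H_d^{2\perp}$ (for the triangular form) together with cyclicity of the vacuum vector (to pass from $\ol{JH^2_d}$ to $\ol{J\,1}$).
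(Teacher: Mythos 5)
Your proposal is correct and follows essentially the same route as the paper: the paper's ``proof'' is exactly the preceding discussion, which transports \cite[Theorem 2.1]{DavPitts2} through the identification $\cM_d\cong\cL_d/\cC$ using the bijections between ideals containing $\cC$ (resp.\ invariant subspaces containing $\mu(\cC)=H_d^{2\perp}$) and the observation that $N\oplus H_d^{2\perp}$ is also $\cR_d$-invariant because $\cM_d$ is abelian. The only difference is that you explicitly verify the bookkeeping identifying the transported maps with $\alpha$ and $\beta$ (the triangular form and cyclicity of $1$), which the paper leaves implicit.
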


\noindent Moreover \cite[Theorem 2.1]{DavPittsPick} then yields:

\begin{theorem} \label{thm:DavPitts3}
If $J$ is a \wot-closed ideal of $\cM_d$ with range $N$, then
$\cM_d/J$ is completely isometrically isomorphic and \wot-homeomorphic
to the compression of $\cM_d$ to $N^\perp$.
\end{theorem}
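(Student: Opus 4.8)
The plan is to derive Theorem~\ref{thm:DavPitts3} as a direct transfer of \cite[Theorem 2.1]{DavPittsPick} through the correspondence set up in the paragraph preceding Theorem~\ref{thm:DavPitts2}. Recall that \cite[Theorem 2.1]{DavPittsPick} says that for a \wot-closed ideal $\fJ$ of $\cL_d$, the quotient $\cL_d/\fJ$ is completely isometrically isomorphic and \wot-homeomorphic to the compression of $\cL_d$ to $\mu(\fJ)^\perp$, where $\mu(\fJ) = \ol{\fJ\cF}$. So the first step is: given a \wot-closed ideal $J$ of $\cM_d$ with range $N = \ol{J\cdot 1}$, lift it to the unique \wot-closed ideal $\fJ$ of $\cL_d$ containing $\cC$ that corresponds to $J$ under the bijection already established. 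One checks that $\mu(\fJ) = N \oplus H_d^{2\perp}$: indeed $\fJ \supseteq \cC$ gives $\mu(\fJ) \supseteq \mu(\cC) = H_d^{2\perp}$, and modulo $\cC$ the ideal $\fJ$ is carried to $J$ acting on $H_d^2$, whose range is exactly $N$ by definition of $\alpha$.

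Next I would identify the quotients. Since $\cC \subseteq \fJ$, there is a natural completely isometric, \wot-homeomorphic identification $\cL_d/\fJ \cong (\cL_d/\cC)/(\fJ/\cC) \cong \cM_d/J$, using the identification $\cM_d \cong \cL_d/\cC$ from \cite[Corollary 2.3]{DavPittsPick} and the fact that $\fJ/\cC$ corresponds to $J$. On the other side, $\mu(\fJ)^\perp = (N \oplus H_d^{2\perp})^\perp = N^\perp \cap H_d^2 = N^\perp$, the orthocomplement taken inside $H_d^2$ (here $N \subseteq H_d^2$). The compression of $\cL_d$ to $\mu(\fJ)^\perp$ therefore lands in $H_d^2$, where $\cL_d$ acts as $\cM_d$ (again by \cite[Corollary 2.3]{DavPittsPick}), so this compression is precisely the compression of $\cM_d$ to $N^\perp$. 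Composing the two completely isometric \wot-homeomorphic identifications with the statement of \cite[Theorem 2.1]{DavPittsPick} yields the claim.

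The one point requiring a little care — and the closest thing to an obstacle — is verifying that the various identifications are mutually compatible, i.e.\ that the completely isometric \wot-homeomorphism $\cM_d/J \to P_{N^\perp}\cM_d|_{N^\perp}$ one gets is genuinely the compression map $f + J \mapsto P_{N^\perp} M_f|_{N^\perp}$, rather than some abstract isomorphism. This amounts to chasing the quotient map $\cL_d \to \cL_d/\fJ$ and the compression $\cL_d \to P_{\mu(\fJ)^\perp}\cL_d|_{\mu(\fJ)^\perp}$ through the restriction to $H_d^2$, and observing that restricting a compression to an invariant (for $\cM_d$) subspace is again a compression; this is routine since $N^\perp$ is semi-invariant for $\cM_d$ as it is the orthogonal difference of the two invariant subspaces $H_d^2 \supseteq N$. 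Everything else is formal bookkeeping with the dictionary already in place.
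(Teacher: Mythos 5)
Your proposal is correct and follows essentially the same route as the paper, which obtains Theorem~\ref{thm:DavPitts3} precisely by passing through the bijection between \wot-closed ideals of $\cM_d$ and \wot-closed ideals of $\cL_d$ containing $\cC$, identifying $\mu(\fJ) = N \oplus H_d^{2\perp}$, and then invoking \cite[Theorem 2.1]{DavPittsPick}. You have simply written out the bookkeeping (including the semi-invariance of $N^\perp$) that the paper leaves implicit.
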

\smallbreak

\subsection*{The multiplier algebra of a variety}\hspace*{-.7ex}
The reproducing kernel Hilbert space $\cF_V$ comes with its multiplier algebra 
$\cM_V = \Mult(\cF_V)$.
This is the algebra of all functions $f$ on $V$ such that $f h \in \cF_V$ 
for all $h \in \cF_V$. 
A standard argument shows that each multiplier determines a bounded
linear operator $M_f \in \B(\cF_V)$ given by $M_f h = f h$.
We will usually identify the function $f$ with its multiplication operator $M_f$.
We will also identify the subalgebra of $\B(\cF_V)$ consisting of the $M_f$'s 
and the algebra of functions $\cM_V$ (endowed with the same norm). 
One reason to distinguish $f$ and $M_f$ is that sometimes we need to consider 
the adjoints of the operators $M_f$. 
The distinguishing property of these adjoints is that 
$M_f^* k_\lambda = \ol{f(\lambda)}k_\lambda$ for $\lambda \in V$, in the sense that
if $A^* k_\lambda = \ol{f(\lambda)}k_\lambda$  for $\lambda \in V$,
then $f$ is a multiplier.

The space $\cF_V$ is therefore invariant for the adjoints of multipliers;
and hence it is the complement of an invariant subspace of $\M$.
Thus an application of Theorem~\ref{thm:DavPitts3} and the
complete Nevanlinna-Pick property yields:

\begin{proposition}\label{prop:complete_quotient}
Let $V$ be an analytic variety in $\bB_d$. Then
\[
 \cM_V = \{f |_{V} : f \in \cM \}.
\]
Moreover the mapping $\phi : \cM \rightarrow \cM_V$ given by 
$\phi(f) = f|_V$ induces a completely isometric isomorphism 
and \wot-homeomorphism of $\cM / J_V$ onto $\cM_V$.
For any $g \in \cM_V$ and any $f\in\cM$ such that $f|_V=g$,
we have $M_g = P_{\cF_V} M_f|_{\cF_V}$.
Given any $F \in M_k(\cM_V)$, one can choose $\widetilde F \in M_k(\cM)$
so that $\widetilde F|_V = F$ and $\|\widetilde F\|=\|F\|$.
\end{proposition}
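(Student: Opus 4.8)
The plan is to deduce Proposition~\ref{prop:complete_quotient} directly from the structure theory already assembled, treating each assertion in turn. First I would observe that $\cF_V = \ol{\spn}\{k_\lambda : \lambda \in V\}$ is invariant under $M_f^*$ for every $f \in \cM$, since $M_f^* k_\lambda = \ol{f(\lambda)} k_\lambda$; hence $\cF_V^\perp$ is an invariant subspace $N$ for $\cM$. By Theorem~\ref{thm:DavPitts2}, $N = \alpha(J) = \ol{J \cdot 1}$ for the \wot-closed ideal $J = \beta(N) = \{f \in \cM : f \cdot 1 \in N\}$. The point to nail down is that this $J$ is exactly $J_V$: indeed $f \cdot 1 = f \in \cF_V^\perp$ means $\langle f, k_\lambda\rangle = f(\lambda) = 0$ for all $\lambda \in V$, which is precisely the defining condition for $J_V$. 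So $N^\perp = \cF_V$ and the compression picture of Theorem~\ref{thm:DavPitts3} applies to $J_V$.

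Next I would run the quotient identification. Theorem~\ref{thm:DavPitts3} gives that $\cM/J_V$ is completely isometrically isomorphic and \wot-homeomorphic to $P_{\cF_V}\cM|_{\cF_V}$, via $f + J_V \mapsto P_{\cF_V} M_f|_{\cF_V}$. It remains to see that this compression algebra coincides with $\cM_V$ acting on $\cF_V$, and that the abstract quotient map is implemented by restriction of functions $f \mapsto f|_V$. For the latter: if $g \in \cF_V$ then, writing $g = \sum c_i k_{\lambda_i}$-type limits, one checks $(P_{\cF_V} M_f|_{\cF_V} g)(\lambda) = \langle P_{\cF_V} M_f g, k_\lambda\rangle = \langle M_f g, k_\lambda\rangle = \langle g, M_f^* k_\lambda\rangle = f(\lambda) g(\lambda)$ for $\lambda \in V$, so the compression of $M_f$ is multiplication by $f|_V$ on $\cF_V$; this shows $\{f|_V : f \in \cM\} \subseteq \cM_V$ and that the quotient map is $\phi(f) = f|_V$. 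Conversely, any $h \in \cM_V$ has an adjoint $M_h^*$ with $M_h^* k_\lambda = \ol{h(\lambda)} k_\lambda$; the distinguishing-property remark in the text (an operator $A$ on $\cF_V$ with $A^* k_\lambda = \ol{g(\lambda)} k_\lambda$ forces $g \in \cM_V$) combined with the fact that $\cF_V$ is co-invariant shows $M_h$ lifts to a multiplier of $H^2$, giving the reverse inclusion $\cM_V \subseteq \{f|_V : f \in \cM\}$ and surjectivity of $\phi$; its kernel is visibly $J_V$.

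The matricial statement is the only part requiring a little extra care, and I expect it to be the main obstacle. The complete isometry of $\cM/J_V \cong \cM_V$ already says $\|F\|_{M_k(\cM_V)} = \operatorname{dist}(\widetilde F_0, M_k(J_V))$ for any lift $\widetilde F_0 \in M_k(\cM)$ of $F$, so the infimum of $\|\widetilde F\|$ over lifts equals $\|F\|$; the content is that this infimum is \emph{attained}. Here I would invoke the quotient structure at the level of $\cL_d$: by \cite[Theorem 2.1]{DavPittsPick}, $M_k(\cM_d/J_V)$ sits completely isometrically as the compression $P_{N^\perp} M_k(\cM_d)|_{N^\perp}$ with $N^\perp = \cF_V$, and for compressions to a co-invariant subspace the norm-preserving lift is obtained by a standard dilation-type argument—one can take $\widetilde F$ to be $M_k(\cM_d)$-valued supported appropriately, or cite the completely isometric identification of the quotient norm with the compression norm which has $P_{\cF_V}\widetilde F_0|_{\cF_V}$ of norm $\le \|F\|$ and then correct by an element of $M_k(J_V)$ that is orthogonal in the right sense. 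Concretely, $F = P_{\cF_V}\widetilde F_0|_{\cF_V}$ and one sets $\widetilde F = $ the "minimal" lift whose existence follows from the commutant lifting / Nevanlinna-Pick interpolation packaged in the complete Pick property; since $\cM_d$ is a complete Pick algebra, the relevant matricial interpolation problem is solvable with the sharp norm, which is exactly the assertion. I would state this last line as a consequence of the complete Pick property together with Theorem~\ref{thm:DavPitts3}, since both are available.
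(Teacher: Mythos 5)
Your first two paragraphs follow the paper's own route: identify $\cF_V^\perp$ with $\ol{J_V\, 1}$ via the ideal/subspace correspondence, invoke Theorem~\ref{thm:DavPitts3}, and verify that the compression of $M_f$ to $\cF_V$ acts as multiplication by $f|_V$. That part is sound. The problem is the surjectivity step, and it is a genuine gap. Co-invariance of $\cF_V$ gives only the inclusion $\{f|_V : f\in\cM\}\subseteq \cM_V$. The ``distinguishing property'' you cite characterizes which operators \emph{on $\cF_V$} are multiplication operators by functions on $V$; it says nothing about extending such a function to a multiplier on the whole ball. The reverse inclusion $\cM_V\subseteq\{f|_V : f\in\cM\}$ is precisely a (matrix-valued) Nevanlinna--Pick extension problem, and no amount of co-invariance bookkeeping produces the extension: for a general reproducing kernel this inclusion can fail. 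The paper handles it in one stroke using the complete Pick property of the Drury--Arveson kernel: for $F\in M_k(\cM_V)$ with $\|F\|=1$, the standard necessary condition gives positivity of $\big[(I_k-F(\lambda_j)F(\lambda_i)^*)\ip{k_{\lambda_i},k_{\lambda_j}}\big]$ over every finite subset of $V$, and then \cite{DavPittsPick} produces $\widetilde F\in M_k(\cM)$ with $\widetilde F|_V=F$ and $\|\widetilde F\|=1$. This single application yields surjectivity \emph{and} the norm-preserving lift simultaneously; your proposal instead claims surjectivity for free and reserves the Pick property only for norm attainment.

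Your final paragraph does eventually name the right tool, so the argument is repairable, but the mechanisms you sketch there are off target. There is no ``correction by an element of $M_k(J_V)$ orthogonal in the right sense'': operator norms do not admit such orthogonal decompositions, and the minimal-norm lift is not produced by a dilation of the compression. If one already knew the restriction map were onto with the quotient norm, attainment of $\dist(\widetilde F_0, M_k(J_V))$ would follow softly from weak-$*$ compactness of balls in $M_k(\cM)$ and weak-$*$ closedness of $J_V$; but that does not substitute for the Pick argument, which is what establishes surjectivity in the first place. I recommend deleting the co-invariance ``proof'' of the reverse inclusion and replacing the last paragraph with the explicit Pick matrix computation.
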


\begin{proof}
Theorem~\ref{thm:DavPitts3} provides the isomorphism between $\cM / J_V$
and the restriction of the multipliers to $N^\perp$ where $N = \ol{J_V 1}$.
Since $J_V$ vanishes on $V$, if $f\in J_V$, we have 
\[
 \ip{M_f h,k_\lambda} = \ip{h, M_f^* k_\lambda} = 0 
 \qforal \lambda\in V \AND h \in H^2_d .
\]
So $N$ is orthogonal to $\cF_V$. 
Conversely, if $M_f$ has range orthogonal to $\cF_V$,
the same calculation shows that $f\in J_V$. 
Since the pairing between subspaces and ideals is bijective, we deduce that
$N=\cF_V^\perp$.
The mapping of $\cM / J_V$ into $\cM_V$ is given by compression to $\cF_V$ 
by sending $f$ to $P_{\cF_V} M_f|_{\cF_V}$.

It is now evident that the restriction of a multiplier $f$ in $\cM$ to $V$ yields
a multiplier on $\cF_V$, and that the norm is just $\|f+ J_V\| = \|P_{\cF_V} M_f|_{\cF_V}\|$.
We need to show that this map is surjective and completely isometric.
This follows from the complete Nevanlinna-Pick property as in 
\cite[Corollary 2.3]{DavPittsPick}.
Indeed, if $F \in M_k(\cM_V)$ with $\|F\|=1$, then standard computations 
show that if $\lambda_1,\dots,\lambda_n$ lie in $V$, then
\[
\Big[\big(I_k - F(\lambda_j) F(\lambda_i)^* \big) \ip{ k_{\lambda_i}, k_{\lambda_j}}  \Big]_{n\times n}  
\]
is positive semidefinite. 
By \cite{DavPittsPick}, this implies that there is a matrix multiplier 
$\widetilde{F} \in M_k(\cM)$ with $\|\widetilde{F}\| = 1$ such that 
$\widetilde{F} |_V = F$.
\end{proof}

We can argue as in the previous subsection that there is a bijective correspondence
between \wot-closed ideals of $\cM_V$ and its invariant subspaces:

\begin{corollary} \label{cor:DavPitts1}
Define the map $\alpha : \operatorname{Id}(\cM_V) \rightarrow \Lat (\cM_V)$ by $\alpha(J) = \ol{J 1}$.   Then $\alpha$ is a complete lattice isomorphism whose inverse $\beta$ is given by
\[\beta (N) = \{ f \in \cM_V: f\cdot 1 \in N\}. \]
\end{corollary}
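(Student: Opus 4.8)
The plan is to reduce Corollary~\ref{cor:DavPitts1} to Theorem~\ref{thm:DavPitts2} by transporting the lattice isomorphism $\alpha$ for $\cM_d$ through the complete quotient map $\phi\colon\cM\to\cM_V$, $\phi(f)=f|_V$, from Proposition~\ref{prop:complete_quotient}. First I would recall that $\phi$ induces a completely isometric, \wot-homeomorphic isomorphism $\cM/J_V\cong\cM_V$; in particular $\phi$ is surjective, \wot-continuous and open for the \wot-topology, so it carries \wot-closed ideals to \wot-closed ideals and \wot-closed invariant subspaces to \wot-closed invariant subspaces. Since $\cM_V$ is realized concretely on $\cF_V=N^\perp$ (where $N=\ol{J_V 1}=\cF_V^\perp$) with $M_g=P_{\cF_V}M_f|_{\cF_V}$ whenever $\phi(f)=g$, the cyclic vector $1\in H^2_d$ maps to $P_{\cF_V}1$, which is again a cyclic vector for $\cM_V$ because $\cF_V=\ol{\spn}\{k_\lambda:\lambda\in V\}$ and $\cM_V$ separates points of $V$ (indeed the coordinate functions do).

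Next I would set up the correspondence explicitly. On the ideal side, every \wot-closed ideal $\mathfrak{J}\subseteq\cM_V$ pulls back to a \wot-closed ideal $\widehat{\mathfrak{J}}:=\phi^{-1}(\mathfrak{J})\supseteq J_V$ in $\cM_d$, and $\mathfrak{J}=\phi(\widehat{\mathfrak{J}})$ by surjectivity. On the subspace side, a \wot-closed $\cM_V$-invariant subspace $M\subseteq\cF_V$ corresponds to $M\oplus\cF_V^\perp\subseteq H^2_d$, which is \wot-closed and $\cM_d$-invariant and contains $\cF_V^\perp=N$; conversely any $\cM_d$-invariant $\widehat N\supseteq N$ splits as $\widehat N=(\widehat N\cap\cF_V)\oplus\cF_V^\perp$ since $\cF_V$ is $\cM_d^*$-invariant. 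One then checks the diagram commutes: for $\mathfrak{J}\subseteq\cM_V$,
\[
\ol{\,\widehat{\mathfrak{J}}\cdot 1\,}=\ol{\,\mathfrak{J}\cdot P_{\cF_V}1\,}\oplus\cF_V^\perp,
\]
which follows because elements of $\widehat{\mathfrak{J}}$ differ from lifts of elements of $\mathfrak{J}$ by elements of $J_V$, and $\ol{J_V\cdot1}=\cF_V^\perp$. Applying $\alpha$ from Theorem~\ref{thm:DavPitts2} to $\widehat{\mathfrak{J}}$ and intersecting with $\cF_V$ therefore yields $\ol{\mathfrak{J}\cdot1}$ (writing $1$ for $P_{\cF_V}1$). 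That $\alpha(\mathfrak{J})=\ol{\mathfrak{J}\cdot1}$ is a lattice isomorphism onto $\Lat(\cM_V)$ with the stated inverse $\beta(M)=\{f\in\cM_V:f\cdot1\in M\}$ is then immediate from the corresponding statement for $\cM_d$: $\alpha$ for $\cM_V$ is conjugate, via the bijections $\mathfrak{J}\leftrightarrow\widehat{\mathfrak{J}}$ and $M\leftrightarrow M\oplus\cF_V^\perp$, to the restriction of $\alpha$ for $\cM_d$ to the sublattice of ideals containing $J_V$, which maps onto the sublattice of invariant subspaces containing $\cF_V^\perp$. Lattice operations (sums, intersections) are preserved because $\phi$ and compression to $\cF_V$ are, and completeness of the lattice isomorphism is inherited from Theorem~\ref{thm:DavPitts2}.

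The main obstacle I anticipate is the bookkeeping around the \wot-topology: one must be sure that $\phi$ is genuinely a \wot-homeomorphism onto $\cM_V$ (given by Proposition~\ref{prop:complete_quotient}), that $\phi^{-1}(\mathfrak{J})$ is \wot-closed when $\mathfrak{J}$ is, and that closures are compatible with the direct-sum decomposition $H^2_d=\cF_V\oplus\cF_V^\perp$ — in particular that $\ol{\widehat{\mathfrak{J}}\cdot1}$ really does decompose as displayed, rather than merely containing the right-hand side. This last point uses that $\cF_V^\perp$ is $\cM_d$-invariant (so $\ol{J_V\cdot1}=\cF_V^\perp$ sits inside $\ol{\widehat{\mathfrak{J}}\cdot1}$) together with the fact that $P_{\cF_V}$ commutes with the relevant limits because $\cF_V^\perp$ is reducing for none of the $M_f$ but is coinvariant, so compression is \wot-continuous on bounded sets. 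Once these topological points are nailed down, the algebraic verification is routine and parallels the argument given for Theorem~\ref{thm:DavPitts2}.
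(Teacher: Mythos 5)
Your proposal is correct and takes essentially the same route as the paper: the paper's proof of this corollary is precisely to ``argue as in the previous subsection,'' i.e.\ to transport the lattice isomorphism of Theorem~\ref{thm:DavPitts2} through the complete quotient $\cM \to \cM_V$ of Proposition~\ref{prop:complete_quotient}, via the correspondences $\mathfrak{J} \leftrightarrow \phi^{-1}(\mathfrak{J}) \supseteq J_V$ and $M \leftrightarrow M \oplus \cF_V^\perp$, exactly as you set up. (One terminological slip worth fixing: $\cF_V^\perp$ is \emph{invariant}, not coinvariant, for $\cM_d$ --- which is in fact the property you use when identifying $P_{\cF_V} M_f 1$ with $M_{\phi(f)} P_{\cF_V} 1$.)
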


\begin{remark}
By Theorem 4.2 in \cite{AM00}, every irreducible complete Nevan\-linna-Pick kernel 
is equivalent to the restriction of the kernel of Drury-Arveson space to a subset of the ball. 
It follows from this and from the above discussion that every multiplier algebra of an
irreducible complete Nevanlinna-Pick kernel is completely isometrically isomorphic
to one of the algebras $\cM_V$ that we are considering here.
\end{remark}

\begin{remark}
By the universality of $Z_1, \ldots, Z_d$  \cite{Arv98}, for every 
unital operator algebra $\mathcal{B}$ that is generated by a \emph{pure} 
commuting row contraction $T = (T_1, \ldots, T_d)$, there exists a unital 
homomorphism $\phi_T: \cM \rightarrow \mathcal{B}$ that gives rise to a 
natural functional calculus 
\[ f(T_1, \ldots, T_d) = \phi_T(f)  \qfor f \in \cM . \]
So it makes sense to say that a commuting row contraction $T$ annihilates 
$J_V$ if $\phi_T$ vanishes on $J_V$.  
By Proposition \ref{prop:complete_quotient}, we may identify $\cM_V$ with 
the quotient $\cM/J_V$, thus we may identify $\cM_V$ as the universal 
\wot-closed unital operator algebra generated by a \emph{pure} commuting 
row contraction $T = (T_1, \ldots, T_d)$ that annihilates $J_V$. 
\end{remark}

\section{The character space of $\cM_V$}\label{sec:characters}

If $A$ is a Banach algebra, denote the set of multiplicative linear functionals on $A$ 
by $M(A)$; and endow this space with the weak-$*$ topology.
We refer to elements of $M(A)$ as \emph{characters}. 
Note that all characters are automatically unital and continuous with norm one. 
When $A$ is an operator algebra, characters are automatically completely contractive.

When $V$ is an analytic variety in $\mb{B}_d$, we will abuse notation and 
let $Z_1, \ldots, Z_d$ also denote the images of the coordinate functions 
$Z_1, \ldots, Z_d$ of $\cM$ in $\cM_V$.
Since $\big[ Z_1,\ \ldots,\ Z_d \big]$ is a row contraction, 
\[ \big\| \big(\rho(Z_1), \ldots, \rho(Z_d) \big)\big\| \le 1 \qforal \rho \in M(\cM_V) .\]
The map $\pi : M(\cM_V) \rightarrow \ol{\mb{B}}_d$ given by 
\[
 \pi(\rho) = (\rho(Z_1), \ldots, \rho(Z_d))
\]
is continuous as a map from $M(\cM_V)$, with the weak-$*$ topology, 
into $\ol{\mb{B}}_d$ (endowed with the weak topology in the case $d= \infty$). 
We define 
\[ \ol{V}^{\cM} = \pi((M(\cM_V)) . \]

Since $\pi$ is continuous, $\ol{V}^{\cM}$ is a (weakly) compact subset of $\ol{\mb{B}_d}$.
For every $\lambda \in \ol{V}^{\cM}$, the {\em fiber} over $\lambda$ is defined 
to be the set $\pi^{-1}(\lambda)$ in $M(\cM_V)$. 
We will see below that $V \subseteq \ol{V}^{\cM}$, 
and that when $d<\infty$ the fiber over every $\lambda \in V$ is a singleton.

Every unital homomorphism $\phi : A \rightarrow B$ between Banach algebras
induces a mapping $\phi^* : M(B) \rightarrow M(A)$ by $\phi^* \rho = \rho \circ \phi$. 
If $\phi$ is an isomorphism, then $\phi^*$ is a homeomorphism. 
We will see below that in many cases the map 
$\phi^* : M(\cM_W) \rightarrow M(\cM_V)$ has additional structure, 
the most important aspect being that $\phi^*$ induces a holomorphic map 
from $W$ into $V$. 

\subsection*{The weak-$*$ continuous characters of $\cM_V$}

In the case of $\cM_d$, the weak-$*$ continuous characters coincide with
the point evaluations at points in the open ball \cite{AriasPopescu95, DavPitts1}
\[
 \rho_\lambda(f) = f(\lambda) = \ip{ f \nu_\lambda, \nu_\lambda} 
 \qfor \lambda \in \mb B_d,
\]
where $\nu_\lambda = k_\lambda/\|k_\lambda\|$. 
The fibers over points in the boundary sphere are at least as complicated as the
fibers in $M(H^\infty)$ \cite{DavPitts2}, which are known to be extremely large \cite{Hoffman}.

As a quotient of a dual algebra by a weak-$*$ closed ideal, the algebra $\cM_V$ 
inherits a weak-$*$ topology. 
As an operator algebra concretely represented on a reproducing kernel Hilbert space, 
$\cM_V$ also has the weak-operator topology (\wot). 
In \cite[Lemma 11.9]{DRS} we observed that, as is the case for the free semigroup 
algebras $\cL_d$ \cite{DavPitts1}, the weak-operator and weak-$*$ topologies on
$\cM_V$ coincide. 
The setting there was slightly different, but the proof remains the same.
It relies on the observation \cite{AriasPopescu00} that $\cM_V$ 
has property $\mb A_1(1)$.

\begin{lemma} \label{lem:weak_weakstar}
The weak-$*$ and weak-operator topologies on $\cM_V$ coincide.
\end{lemma}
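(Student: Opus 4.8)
The plan is to deduce the coincidence of the weak-$*$ and weak-operator topologies from the fact that $\cM_V$ has property $\mb A_1(1)$, following the template of the analogous result for free semigroup algebras in \cite{DavPitts1} and the version for homogeneous varieties in \cite[Lemma 11.9]{DRS}. Recall that an algebra $\A\subseteq\B(\H)$ has property $\mb A_1(1)$ if every weak-$*$ continuous functional $\psi$ on $\A$ with $\|\psi\|<1$ can be written as a vector functional $\psi(A)=\ip{Ax,y}$ with $\|x\|\,\|y\|<1$; this holds for $\cM_V$ by \cite{AriasPopescu00} (or by compressing the corresponding property from $\cL_d$, since $\cF_V$ is coinvariant). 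The key point is that $\mb A_1(1)$ forces the unit ball of $\A$ to have the same weak-$*$ and \wot-relative topologies, and on a dual algebra the topology is determined by its behavior on the unit ball.

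First I would observe that the \wot\ on bounded sets is always weaker than (or equal to) the weak-$*$ topology: each vector functional $A\mapsto\ip{Ax,y}$ is weak-$*$ continuous on $\cM_V$ (being implemented by the trace-class operator, i.e.\ rank-one, $y\otimes\bar x$ composed with the inclusion $\cM_V\hookrightarrow\B(\cF_V)$, which is weak-$*$ continuous because $\cM_V$ is a dual operator algebra whose predual is a quotient of the trace class). Hence every \wot-open set is weak-$*$ open, so the \wot\ is coarser. For the reverse inclusion on bounded sets, let $\psi$ be any weak-$*$ continuous linear functional on $\cM_V$; rescaling we may assume $\|\psi\|<1$, and then property $\mb A_1(1)$ gives vectors $x,y\in\cF_V$ with $\psi(A)=\ip{Ax,y}$. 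Thus every weak-$*$ continuous functional is \wot-continuous, and since a bounded convex set has the same closure in two topologies when the topologies have the same continuous functionals, the two topologies agree on bounded subsets of $\cM_V$.

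Finally I would upgrade this from bounded sets to all of $\cM_V$. Here the relevant fact is that $\cM_V$, being the weak-$*$ closed quotient $\cM/J_V$ of the dual algebra $\cM$ (Proposition~\ref{prop:complete_quotient}), has a predual, and by the Krein--Smulian theorem a convex set in a dual Banach space is weak-$*$ closed iff its intersection with every ball is weak-$*$ closed. The same Krein--Smulian argument applies to the \wot\ once one knows it is the weak-$*$ topology on balls. Concretely: a subspace (or convex set) of $\cM_V$ is \wot-closed iff its intersection with each ball $r\cdot(\cM_V)_1$ is \wot-closed, iff (by the bounded-set result) that intersection is weak-$*$ closed, iff the set is weak-$*$ closed. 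Since the two topologies are locally convex and have the same closed convex sets, they coincide.

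The main obstacle is the passage from bounded sets to the whole algebra: the Krein--Smulian / Banach--Dieudonn\'e argument needs the predual structure handled carefully, and one must make sure the version of property $\mb A_1(1)$ being invoked is exactly the one that yields vectors in $\cF_V$ itself (not merely in the ambient $H^2_d$). Both of these are standard once set up — the first because $\cM_V=\cM/J_V$ genuinely is a dual space with the quotient weak-$*$ topology, and the second because $\cF_V$ is coinvariant for $\cM$ so $\mb A_1(1)$ for $\cM_d$ (from \cite{AriasPopescu00}) restricts to $\mb A_1(1)$ for the compression $\cM_V$ — which is precisely the observation recorded before the statement. So the proof is essentially a citation plus the Krein--Smulian packaging, exactly as in \cite[Lemma 11.9]{DRS}.
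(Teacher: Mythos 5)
Your proposal is essentially the paper's own argument: the paper proves this lemma by citation to \cite[Lemma 11.9]{DRS}, noting only that it rests on the property $\mb A_1(1)$ from \cite{AriasPopescu00}, which is exactly the engine of your proof. One small repair: your closing inference (``two locally convex topologies with the same closed convex sets coincide'') is false in general (e.g.\ weak versus norm topology on a Banach space); but you do not need it, nor do you need Krein--Smulian, since both the weak-$*$ topology and the \wot\ are by definition the weak topologies induced by their respective spaces of continuous functionals, and once $\mb A_1(1)$ shows these two spaces of functionals are equal, the topologies are literally identical.
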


\begin{proposition}\label{prop:charM_V}
The \wot-continuous characters of $\cM_V$ can be identified with $V$ via the correspondence $\lambda \leftrightarrow \rho_\lambda$. 
If $d<\infty$, then $\ol{V}^{\cM} \cap \mb{B}_d = V$, and for for every $\lambda \in V$ the fiber $\pi^{-1}(\lambda)$ is a singleton. 
\end{proposition}

\begin{proof}
As $\cM_V$ is the multiplier algebra of a reproducing kernel Hilbert space on $V$, 
it is clear that for each $\lambda \in V$, the evaluation functional 
\[ \rho_\lambda(f) = f(\lambda) = \ip{ f \nu_\lambda, \nu_\lambda} \] 
is a \wot-continuous character.

On the other hand, the quotient map from the free semigroup algebra $\cL$ 
onto $\cM_V$ is weak-operator continuous. 
Thus, if $\rho$ is a \wot-continuous character of $\cM_V$, then it induces
a \wot-continuous character on $\cL$ by composition. 
Therefore, using \cite[Theorem 2.3]{DavPitts2}, we find that $\rho$ must be equal 
to the evaluation functional $\rho_\lambda$ at some point $\lambda \in \mb{B}_d$. 
Moreover $\rho_\lambda$ annihilates $J_V$.
By Proposition \ref{prop:zero_set}, the point $\lambda$ lies in $V$. Thus we have an identification of the \wot-continuous characters with $V$. 

Now assume that $d<\infty$. If $\rho$ is a character on $\cM_V$ such that $\pi(\rho) = \lambda \in \mb B_d$,
then again it induces a character $\tilde\rho$ on $\cL$ with the property that
$\tilde\rho(L_1,\dots,L_d) = \lambda$. 
Since $d<\infty$, we have
by \cite[Theorem 3.3]{DavPitts2}, that $\tilde\rho$ is \wot-continuous
and coincides with point evaluation.
Hence by the previous paragraph, $\lambda$ belongs to $V$, so $\rho = \rho_\lambda$.
This provides the identification of $\pi^{-1}(V)$ with $V$ in the case $d<\infty$. 
\end{proof}

Thus the character space $M(\cM_V)$ consists of $V$ and $M(\cM_V) \setminus V$, which we call the \emph{corona}. 
By definition, the corona is fibered over $\ol{V}^{\cM} \setminus V$, 
and by the above proposition, when $d<\infty$ this latter set is contained in $\partial \mb{B}_d$.

\begin{remark}
When $d = \infty$, it may happen that $\pi^{-1}(\lambda)$ has more than one element, and it may also happen that $\ol{V}^{\cM} \cap \mb{B}_d \supsetneq V$. Such examples were found by Michael Hartz 
(private communication).
\end{remark}

\subsection*{Isomorphisms and induced maps}

\begin{proposition}\label{prop:induced_map}
Let $V$ and $W$ be varieties in $\B_d$, and let $\phi : \M_V \rightarrow \M_W$ be a unital homomorphism. 
Then $\phi$ gives rise to function $F_\phi : W \rightarrow \ol{\mb{B}_d}$ by 
\[
F_\phi = \pi \circ \phi^*\big|_W. 
\]
Moreover, there exist multipliers $F_1, F_2, \ldots \in \M_\infty$ such that $F_\phi = (F_1\big|_W, \, F_2\big|_W,\, \ldots)$. For every $\lambda \in W$, $\phi^*(\rho_\lambda)$ lies in the fiber over $F_\phi(\lambda)$. If $\phi$ is completely bounded or $d<\infty$, then $F_\phi$ extends to a holomorphic function defined on $\bB_d$. 
\end{proposition}

\begin{proof}
By Proposition \ref{prop:charM_V}, we may identify $W$ as a subset of $M(\M_W)$, so the definition $F_\phi = \pi \circ \phi^*\big|_W$ makes sense. 
Now for every $i=1,2,\ldots$, we compute for every $\lambda \in W$
\[
\phi^* (\rho_\lambda)(Z_i) = \phi(Z_i) (\lambda). 
\]
Now, $\phi(Z_i) \in \M_W$ for all $i$. 
By Proposition \ref{prop:complete_quotient}, for every $i$ there is an element $F_i \in \M$ such that $\phi(Z_i) = F_i \big|_W$ and $\|F_i\| = \|\phi(Z_i)\|$. 
Moreover, the norm of the row operator $\big [ M_{F_1},\ \ldots,\ M_{F_d} \big]$ can be chosen equal to the multiplier norm of $\big[ \phi(Z_1),\ \ldots,\ \phi(Z_d) \big]$, in case the latter is finite. 
In any case we find that for all $\lambda \in W$, 
\[
F_\phi(\lambda) = (F_1(\lambda), F_2(\lambda), \ldots ). \qedhere
\]
\end{proof}

\begin{proposition}\label{prop:weak_star_homo}
Let $V$ and $W$ be varieties in $\bB_d$. If $\phi : \M_V \rightarrow \M_W$ is a unital homomorphism such that $\phi^*$ takes weak-$*$ continuous characters to weak-$*$ continuous characters, then $F_\phi(W) \subseteq V$ and $\phi$ is given by 
\[
\phi(f) = f \circ F_\phi \quad , \quad f \in \M_V. 
\]
\end{proposition}
\begin{proof}
Since $\phi^*$ takes weak-$*$ continuous functionals to weak-$*$ continuous functional, $\phi^*(\rho_\lambda)$ is an evaluation functional. 
Since it lies in the fiber $\pi^{-1}(F(\lambda))$ by definition, then $\phi^*(\rho_\lambda) = \rho_{F_\phi(\lambda)}$ and $F_\phi(\lambda) \in V$ for every $\lambda \in W$. 

Let $f \in \M_V$ and let $\lambda \in W$. Then 
\[
\phi(f)(\lambda) = \rho_\lambda (\phi(f)) = \phi^*(\rho_\lambda)(f) = f(F_\phi(\lambda)).
\]
This shows that $\phi$ is implemented by composition with $F_\phi$. 
\end{proof}

\begin{corollary}\label{cor:weak_star_iso}
Let $V$ and $W$ be varieties in $\bB_d$. If $\phi : \M_V \rightarrow \M_W$ is a unital  homomorphism that is weak-$*$ continuous, then  $F_\phi(W) \subseteq V$ and $\phi$ is given by 
\[
\phi(f) = f \circ F_\phi \quad , \quad f \in \M_V. 
\]
Moreover, if there there exists a weak-$*$ continuous isomorphism $\phi : \M_V \rightarrow \M_W$, then $F_\phi(W) = V$, $F_{\phi^{-1}}(V) = W$, and there are sequences of multipliers $F_1, F_2, \ldots \in \M$ and $G_1, G_2, \ldots \in \M$ such that 
\[
F_\phi = (F_1\big|_W,\, F_2\big|_W,\, \ldots ), 
\]
and 
\[
F_{\phi^{-1}} = (G_1\big|_V,\, G_2\big|_V,\, \ldots ). 
\]
\end{corollary}

\begin{proof}
The first follows from Proposition \ref{prop:weak_star_homo}. The second part follows easily once we note that the inverse of a weak-$*$ continuous isomorphism is weak-$*$ continuous. 
\end{proof}

\begin{remark}
In the case $d<\infty$, since the maps $F_i$ and $G_i$ above are defined on the whole unit ball $\bB_d$, the maps $F_\phi$ and $F_{\phi^{-1}}$ extend to the ball, and we find that $V$ and $W$ are biholomorphic. Since the maps involved are multipliers --- not merely bounded holomorphic functions --- then we say that $V$ and $W$ are {\em multiplier biholomorphic}. 
When $d=\infty$ we do not say that $V$ and $W$ are biholomorphic because for $\lambda \in \bB_\infty \setminus W$, we do not know if $(F_1(\lambda), F_2(\lambda), \ldots, )$ is in $\ell^2$. 
\end{remark}

\begin{corollary}\label{cor:weak_star_iso}
Let $V$ and $W$ be varieties in $\bB_d$ for $d<\infty$. If there there exists a weak-$*$ continuous isomorphism $\phi : \M_V \rightarrow \M_W$, then $V$ and $W$ are multiplier biholomorphic. 
\end{corollary}

In the next sections we will try to control $F_\phi$ given assumptions on the variety or the isomorphisms involved, in order to obtain a classification. Without the assumption of finite dimensionality or weak-$*$ continuity, the situation becomes significantly more involved.

\section{Completely isometric isomorphisms }\label{S:cc_iso}

The (completely) isometric automorphisms of $\cM$ arise as composition with an automorphism of the ball 
(i.e., a biholomorphism of the ball onto itself). 
This can be deduced from \cite[Section 4]{DavPitts2}, or alternatively from 
Theorems 3.5 and 3.10 in \cite{Popescu10}. 
In \cite[Section 9]{DRS}, we wrote down the explicit form of the unitaries 
on $H^2$ that implement these automorphisms. 
We will use these unitaries to construct unitarily implemented, completely isometric 
isomorphisms of the multiplier algebras that we are studying. 
In addition, we will show that all completely isometric isomorphisms 
of these algebras arise in this way. 

\begin{proposition} \label{prop:auto_implemt_iso}
Let $V$ and $W$ be varieties in $\mb{B}_d$.
Let $F$ be an automorphism of $\mb{B}_d$ that maps $W$ onto $V$. 
Then $f \mapsto f \circ F$ is a unitarily implemented completely 
isometric isomorphism of $\cM_V$ onto $\cM_W$;
i.e. $M_{f\circ F} = U M_f U^*$. 
The unitary $U^*$ is the linear extension of the map 
\[ U^* k_w = c_w k_{F(w)} \qfor  w \in W ,  \] 
where $c_w = (1-\|F^{-1}(0)\|^2)^{1/2} \ol{  k_{F^{-1}(0)} (w)}$.
\end{proposition}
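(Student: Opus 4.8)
The plan is to verify the formula for $U^*$ directly, check that $U$ is unitary from $\cF_V$ onto $\cF_W$, and then confirm the intertwining relation $M_{f\circ F}U^* = U^* M_f$, which gives $M_{f\circ F} = U M_f U^*$ once $U = (U^*)^*$ is identified. First I would recall the automorphism formula for $\cM_d$: from \cite[Section 9]{DRS} we know that composition with an automorphism $F$ of $\mb B_d$ is unitarily implemented on $H^2_d$, and the implementing unitary $U_F^*$ acts on kernel functions by $U_F^* k_\mu = c_\mu k_{F(\mu)}$ where $c_\mu = (1-\|F^{-1}(0)\|^2)^{1/2}\,\ol{k_{F^{-1}(0)}(\mu)}$. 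The computational heart of this is the classical automorphism identity
\[
1 - \lel F(z), F(w)\rir = \frac{(1-\|a\|^2)\,(1-\lel z,w\rir)}{(1-\lel z,a\rir)(1-\lel a,w\rir)},
\]
where $a = F^{-1}(0)$, which shows that $\lel U_F^* k_w, U_F^* k_z\rir = c_z \ol{c_w} \lel k_{F(w)}, k_{F(z)}\rir = \lel k_w, k_z\rir$, so $U_F^*$ extends to an isometry of $H^2_d$; since $F$ is invertible the same argument applied to $F^{-1}$ gives a two-sided inverse, so $U_F^*$ is unitary. This is the step I expect to be the only real computation, and it is already essentially in \cite{DRS}, so I would cite it rather than redo it.

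Next I would observe that, because $F$ maps $W$ onto $V$ bijectively, we have $U_F^* \cF_W = \ol{\spn}\{k_{F(w)} : w\in W\} = \ol{\spn}\{k_\lambda : \lambda \in V\} = \cF_V$, so $U := U_F|_{\cF_V}$ is a unitary from $\cF_V$ onto $\cF_W$ whose adjoint $U^*$ is the claimed map on kernels. Then for $f\in\cM_V$, pick (via Proposition~\ref{prop:complete_quotient}) an extension $\tilde f\in\cM$ with $\tilde f|_V = f$; note $\tilde f\circ F\in\cM$ restricts on $W$ to $f\circ F$, and $f\circ F$ is a multiplier of $\cF_W$ because for $w\in W$,
\[
(U^* M_{\tilde f}^* U)^* k_w \text{-type computation: } M_{\tilde f\circ F}^* k_w = \ol{\tilde f(F(w))}\,k_w = \ol{f\circ F(w)}\,k_w,
\]
so $M_{f\circ F} = P_{\cF_W} M_{\tilde f\circ F}|_{\cF_W}$ by Proposition~\ref{prop:complete_quotient}. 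The key adjoint identity on the big space is $U_F^* M_{\tilde f}^* = M_{\tilde f\circ F}^* U_F^*$: evaluating both sides on $k_w$ gives $c_w \ol{\tilde f(F(w))} k_{F(w)}$ on the left and $\ol{(\tilde f\circ F)(w)}\, c_w k_{F(w)}$ on the right, which agree; since the $k_w$ span $H^2_d$ this holds as operators, hence $U_F M_{\tilde f\circ F}^* U_F^* = M_{\tilde f}^*$, i.e.\ $U_F M_{\tilde f\circ F} U_F^* = M_{\tilde f}$.

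Finally I would compress to the subspaces: since $\cF_V = U_F\cF_W$ is co-invariant for $M_{\tilde f}$ (it is the complement of an invariant subspace of $\cM$) and $\cF_W$ is co-invariant for $M_{\tilde f\circ F}$, compressing the identity $U_F M_{\tilde f\circ F} U_F^* = M_{\tilde f}$ to $\cF_V$ yields $U M_{f\circ F} U^* = M_f$ on $\cF_V$, equivalently $M_{f\circ F} = U^* M_f U$ on $\cF_W$ — matching the statement with the roles of $U$ and $U^*$ as written. That $f\mapsto f\circ F$ is then an algebra isomorphism of $\cM_V$ onto $\cM_W$ is immediate since $M_{(fg)\circ F} = M_{f\circ F}M_{g\circ F} = U^*M_fM_gU = U^*M_{fg}U$, and it is completely isometric because conjugation by the fixed unitary $U$ is completely isometric; surjectivity follows by running the argument with $F^{-1}$ in place of $F$. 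The only genuine obstacle is bookkeeping: keeping straight which unitary ($U$ versus $U^*$) implements which direction, and confirming that restriction of multipliers from $\cM$ to the subvarieties is compatible with all of the above — both of which are handled cleanly by Proposition~\ref{prop:complete_quotient}.
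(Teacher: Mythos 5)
Your overall strategy is the same as the paper's: identify $U^*$ on kernel functions, check unitarity via the automorphism identity, note that $U^*$ carries $\cF_W$ onto $\cF_V$, and verify the intertwining by passing to adjoints on kernel vectors. However, the central intertwining step fails as written. You claim $U_F^* M_{\tilde f}^* = M_{\tilde f\circ F}^* U_F^*$ and assert that both sides applied to $k_w$ give $c_w\,\ol{\tilde f(F(w))}\,k_{F(w)}$. But $M_{\tilde f}^* k_w = \ol{\tilde f(w)}\,k_w$, so the left side is $\ol{\tilde f(w)}\,c_w k_{F(w)}$ --- the eigenvalue is $\ol{\tilde f(w)}$, not $\ol{\tilde f(F(w))}$ --- while the right side is $c_w\,\ol{\tilde f(F(w))}\,k_{F(w)}$. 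These do not agree. The correct identity has $\tilde f$ and $\tilde f\circ F$ interchanged: $U_F^* M_{\tilde f\circ F}^* = M_{\tilde f}^* U_F^*$, equivalently $M_{\tilde f\circ F} = U_F M_{\tilde f} U_F^*$, which one also sees directly from $U_F h = (1-\|a\|^2)^{1/2} k_a\,(h\circ F)$, since $U_F(\tilde f h) = (\tilde f\circ F)\, U_F h$.

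This error propagates: your final formula $M_{f\circ F} = U^* M_f U$ has $U$ and $U^*$ interchanged relative to the statement (and relative to the truth), and the remark that it ``matches the statement with the roles of $U$ and $U^*$ as written'' papers over a genuine discrepancy rather than resolving it --- the two formulas are different assertions, and only one is correct. Once the intertwining is fixed, your compression argument does give $M_{f\circ F} = U M_f U^*$ with $U = U_F|_{\cF_V}\colon \cF_V\to\cF_W$ exactly as claimed, and the rest of the proposal (unitarity of $U_F$ via the automorphism identity, the identification $U_F^*\cF_W = \cF_V$, co-invariance of $\cF_V$ and $\cF_W$) is sound and matches the paper. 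Note also that the paper avoids the detour through extensions $\tilde f$ and compression of a big-space identity: it computes $U M_f^* U^* k_w = \ol{(f\circ F)(w)}\,k_w$ directly for $w\in W$, which simultaneously shows that $f\circ F$ is a multiplier of $\cF_W$ and gives the unitary equivalence in one stroke.
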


\begin{proof}
Let $F$ be such an automorphism, and set $\alpha = F^{-1}(0)$. 
By \cite[Theorem 9.2]{DRS}, the unitary map $U \in \B(H^2)$ is given by
\[
 U h = (1-\|\alpha\|^2)^{1/2} k_{\alpha} (h \circ F) \qfor h \in H^2 .
\]
As $F(W) = V$, $U$ takes the functions in $H^2$ 
that vanish on $V$ to the functions in $H^2$ that vanish on $W$. 
Therefore it takes $\cF_V$ onto $\cF_W$. 

Let us compute $U^*$. 
For $h \in H^2$ and $w \in W$, we have
\begin{align*}
 \lel h, U^* k_w \rir &= \lel U h , k_w \rir \\
 &= \lel (1-\|\alpha\|^2)^{1/2} k_{\alpha} (h \circ F) , k_w \rir \\ 
 &= (1-\|\alpha\|^2)^{1/2} k_{\alpha} (w)\, h (F(w)) \\
 &=  \lel h , c_w k_{F(w)} \rir ,
\end{align*}
where $c_w = (1-\|F^{-1}(0)\|^2)^{1/2} \ol{  k_{F^{-1}(0)} (w)}$. 
Thus $U^* k_w = c_w k_{F(w)}$. 
Note that since $U^*$ is a unitary, $|c_w| = \|k_w\|/\|k_{F(w)}\|$. 

Finally, we show that conjugation by $U$ implements the isomorphism 
between $\cM_V$ and $\cM_W$ given by composition with $F$.
Observe that $U c_w k_{F(w)} = k_w$. 
For $f \in \cM_V$ and $w \in W$, 
\[
 U M_f^* U^* k_w =  U M_f^* c_w k_{F(w)} 
 = \ol{f(F(w))} U c_w k_{F(w))} =  \ol{(f\circ F)(w)} k_w .
\]
Therefore $f\circ F$  is a multiplier on $\cF_W$ and $M_{f\circ F} = U M_f U^*$.
\end{proof}

Now we turn to the converse.

\begin{lemma}\label{L:cc_homo_holo}
Let $V \subseteq \mb{B}_d$ and $W \subseteq \mb{B}_{d'}$ be varieties. 
Let $\phi$ be a unital, completely contractive algebra isomorphism of
$\cM_V$ into $\cM_W$. If $d=\infty$ then assume also that $\phi$ is isometric. 
Then there exists a holomorphic map $F: \mb{B}_{d'} \to \mb{B}_d$ 
such that
\begin{enumerate}
\item $F(W) \subseteq V$.
\item $F|_{W} = \phi^*|_{W}$.
\item the components $F_1, \ldots, F_d$ of $F$ form a row contraction
 of operators in $\cM_{d'}$.
\item $\phi$ is given by composition with $F$, that is
\[
 \phi(f) = f \circ F \qfor f \in \cM_V .
\]
\end{enumerate}
\end{lemma}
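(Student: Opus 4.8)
The plan is to build $F$ one coordinate at a time using the functional calculus available for pure commuting row contractions. Let $Z_1,\dots,Z_d$ denote the coordinate multipliers in $\cM_V$, and set $f_i = \phi(Z_i) \in \cM_W$ for $1\le i\le d$. Since $\phi$ is unital and completely contractive and $[Z_1,\dots,Z_d]$ is a row contraction in $M_{1,d}(\cM_V)$, the row $[f_1,\dots,f_d]=\phi^{(1,d)}([Z_1,\dots,Z_d])$ is a row contraction in $M_{1,d}(\cM_W)$; this gives (3). Because $\cM_W$ consists of multipliers on $\cF_W\subseteq H^2_{d'}$, each $f_i$ is a bounded holomorphic function on $\mb B_{d'}$, and $F=(f_1,\dots,f_d)\colon\mb B_{d'}\to\ol{\mb B}_{d'}$ is holomorphic. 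The first genuine point is to see that $F$ maps into the \emph{open} ball $\mb B_d$: a holomorphic map of the ball into its closure whose coordinates form a nonconstant row contraction cannot touch the boundary sphere at an interior point by the maximum modulus principle, so either $F(\mb B_{d'})\subseteq\mb B_d$ or $F$ is a constant unimodular point of $\partial\mb B_d$ — and the constant case is ruled out because $\phi$ is injective (a constant $F$ would force $\phi(Z_i)$ to be scalars, collapsing the range).

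Next I would identify $F$ with $\phi^*$ on $W$. For $w\in W$ let $\rho_w\in M(\cM_W)$ be evaluation at $w$, which is \wot-continuous by Proposition~\ref{prop:charM_V}. Then $\rho_w\circ\phi$ is a character of $\cM_V$, and since $\phi$ is \wot-continuous (being a quotient map composed with a \wot-homeomorphism via Proposition~\ref{prop:complete_quotient}, or directly because completely contractive isomorphisms between these algebras respect the topology by Lemma~\ref{lem:weak_weakstar}) it is a \wot-continuous character, hence equals $\rho_\lambda$ for a unique $\lambda=\lambda(w)\in V$, again by Proposition~\ref{prop:charM_V}. Evaluating on the coordinate functions gives $\lambda(w)_i=\rho_w(\phi(Z_i))=f_i(w)$, so $\phi^*(\rho_w)=\rho_{F(w)}$ and $F(W)\subseteq V$, establishing (1) and (2). (One should remark here that it is not a priori clear $\phi$ is \wot-continuous without Lemma~\ref{lem:weak_weakstar}; the clean route is to note $\rho_w\circ\phi$ is bounded and multiplicative, then check it is \wot-continuous by testing against the \wot-dense subalgebra of polynomials in the $Z_i$, on which it agrees with $\rho_{F(w)}$.)

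Finally, (4): for any polynomial $p$ in $Z_1,\dots,Z_d$ we have, by multiplicativity of $\phi$, $\phi(p(Z))=p(f_1,\dots,f_d)$, and evaluating at $w\in W$ gives $\phi(p)(w)=p(F(w))=(p\circ F)(w)$. Since polynomials are \wot-dense in $\cM_V$ (the $Z_i$ generate $\cM_d$ as a \wot-closed algebra, hence the polynomials are \wot-dense in $\cM_V$ by the quotient map of Proposition~\ref{prop:complete_quotient}) and both $\phi$ and composition-with-$F$ are \wot-continuous, $\phi(f)=f\circ F$ for all $f\in\cM_V$. I expect the main obstacle to be the bookkeeping around weak-$*$/\wot-continuity of $\phi$ and of the composition map $f\mapsto f\circ F$ — in particular verifying that $f\circ F$ genuinely lies in $\cM_W$ for every $f\in\cM_V$, not just for polynomials. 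The slick way to handle that last point is to observe that $F$ being a row-contractive tuple of multipliers in $\cM_{d'}$ means $(f_1,\dots,f_d)$ is a pure commuting row contraction annihilating $J_V$ (it does, since $\phi$ factors through $\cM_V=\cM_d/J_V$), so the universal functional calculus of the remark following Proposition~\ref{prop:complete_quotient} supplies a \wot-continuous unital homomorphism $\cM_d\to\cM_W$, $f\mapsto f(f_1,\dots,f_d)$, which descends to $\cM_V$ and must coincide with $\phi$; this simultaneously gives membership $f\circ F\in\cM_W$ and the formula $\phi(f)=f\circ F$.
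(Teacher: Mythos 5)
Your construction of $F$ and the argument that its range lies in the open ball $\mb{B}_d$ match the paper's. The genuine gap is in how you identify $\phi^*(\rho_w)$ with the point evaluation $\rho_{F(w)}$, which is the heart of the lemma. You deduce this from the claim that $\phi$ is \wot-continuous, but neither of your justifications works: Proposition~\ref{prop:complete_quotient} concerns the quotient map $\cM_d\to\cM_V$, not an abstract completely contractive isomorphism $\phi:\cM_V\to\cM_W$; and Lemma~\ref{lem:weak_weakstar} only says that the weak-$*$ and \wot\ topologies agree on each algebra separately---it says nothing about continuity of $\phi$ between them. In the paper, \wot-continuity of such maps is a \emph{consequence} of the composition form (Theorem~\ref{theorem:weak_continuity}), proved only after this lemma, so assuming it here is circular. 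Your fallback---that $\rho_w\circ\phi$ agrees with $\rho_{F(w)}$ on the \wot-dense polynomials, hence is \wot-continuous and equal to it---also fails: a norm-continuous character is not determined by its values on a \wot-dense subalgebra (every character of $H^\infty$ in the fiber over a boundary point $\zeta$ agrees with ``evaluation at $\zeta$'' on polynomials, yet these characters are pairwise distinct). The same defect undermines the closing ``slick'' argument for (4), which again tries to match $\phi$ with a \wot-continuous map by checking agreement on a \wot-dense set.

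The repair is a fact you already have in hand but do not use: $\rho_w\circ\phi$ is a character of $\cM_V$ whose image under $\pi$ is $F(w)$, which you showed lies in the \emph{open} ball. The second assertion of Proposition~\ref{prop:charM_V} (that $\ol{V}^{\cM}\cap\mb{B}_d=V$ and the fiber over each interior point is the singleton $\{\rho_\lambda\}$) applies to \emph{arbitrary} characters, not only \wot-continuous ones---the automatic continuity is supplied by \cite[Theorem 3.3]{DavPitts2} inside that proposition's proof. This is exactly the paper's route: it yields $\phi^*(\rho_w)=\rho_{F(w)}$ with $F(w)\in V$ directly, giving (1) and (2), and then (4) follows at once by evaluating $\phi(f)(w)=[\phi^*(\rho_w)](f)=f(F(w))$ for each $w\in W$, with no density or continuity argument needed (in particular $f\circ F$ lies in $\cM_W$ because it \emph{equals} $\phi(f)$).
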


\begin{proof}
We define $F = F_\phi$ to be the row contractive multiplier 
$F_\phi = \big[ F_1, \ldots, F_d \big]$ with coefficients in $\cM_{d'}$ as in Proposition \ref{prop:induced_map}.
As $F$ is contractive as a multiplier, it is also contractive in the sup norm.
Moreover, since $\phi$ is injective, the $F_i$ and $F$ are non-constant holomorphic functions. 
Therefore $F$ must have range in the open ball $\mb{B}_d$. 

By Proposition \ref{prop:weak_star_homo} it remains to show that $\phi^*$ takes weak-$*$ continuous characters to weak-$*$ continuous characters. 
When $d < \infty$ this follows from Propositions \ref{prop:charM_V} and \ref{prop:induced_map} and the fact noted above that $F$ takes values in $\bB_d$. 

If $d=\infty$ we invoke \cite[Corollary 6.6]{KenYan12}, which states that $\M_V$ and $\M_W$ both have strongly unique preduals. It then follows that if $\phi$ is an isometric isomorphism between theses algebras, then $\phi$ is also a weak-$*$ homeomorphism. Therefore $\phi^*$ takes weak-$*$ continuous characters to weak-$*$ continuous characters. 
\end{proof}

\begin{rem}\label{R:cartan}
For the following lemma, we need some results of Cartan and Rudin on bounded domains in $\mb{C}^n$
for the case of $\mb{B}_\infty$.

Cartan's uniqueness theorem \cite[Theorem~2.1.1]{RudinBall} says that a holomorphic function
$F:\Omega\to\Omega$, where $\Omega$ is a bounded domain in $\mb{C}^n$, that satisfies
$F(p)=p$ and $F'(p)=I$ is the identity function $F(z)=z$. 
This remains valid for bounded domains in $\ell^2$ using the same proof.
This yields the corollary \cite[Theorem~2.1.3]{RudinBall} that if $\Omega$ is a bounded circular
domain containing $0$, and $F$ is a biholomorphism of $\Omega$ such that $F(0)=0$,
then $F$ is linear. This only relies on the previous result, so is also valid for $\mb{B}_\infty$.

We also need a result of Rudin \cite[Theorem~8.2.3]{RudinBall} that if $F:\mb{B}_d \to \mb{B}_d$
is holomorphic, then the fixed point set is affine. This is deduced from a result that if $\Omega$ is a balanced, bounded, strictly convex domain in $\mb{C}^n$, and $F:\Omega\to\Omega$ is holomorphic
and $F(0)=0$, then the fixed points of $F$ coincide with the fixed points of $F'(0)$.
The main idea is that $\Omega$ is the unit ball of a Banach space.
So again, these results are also valid for $\mb{B}_\infty$.
\end{rem}

\begin{lemma}\label{L:comp_iso_iso_biholo}
Let $0 \in V \subseteq \mb{B}_d$ and $0 \in W \subseteq \mb{B}_{d'}$ be varieties. 
Let $\phi : \cM_V \rightarrow \cM_W$ be a completely isometric isomorphism
such that $\phi^*\rho_0 = \rho_0$. 
Then there exists an isometric linear map $F$ of $\mb{B}_{d'}\cap \spn{W}$ 
onto $\mb{B}_d \cap \spn{V}$ such that $F(W) = V$, $F(0)=0$
and $F|_W = \phi^*$.
\end{lemma}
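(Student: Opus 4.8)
The plan is to apply the previous lemma in both directions and then extract linearity from the isometric (rather than merely completely isometric) hypothesis. First I would apply Lemma~\ref{L:cc_homo_holo} to $\phi$: since $\phi$ is a unital complete isometry, it is in particular completely contractive, so there is a holomorphic $F : \mb{B}_{d'} \to \mb{B}_d$ with $F(W) \subseteq V$, $F|_W = \phi^*|_W$, the components of $F$ forming a row contraction in $\cM_{d'}$, and $\phi(f) = f \circ F$. Applying the same lemma to $\phi^{-1} : \cM_W \to \cM_V$ yields a holomorphic $G : \mb{B}_d \to \mb{B}_{d'}$ with $G(V) \subseteq W$ and $\phi^{-1}(g) = g \circ G$. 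Composing the two functional-calculus identities gives $f = \phi^{-1}(\phi(f)) = f \circ F \circ G$ for all $f \in \cM_V$, and symmetrically; taking $f$ to be the coordinate functions shows $F \circ G = \id$ on $V$ and $G \circ F = \id$ on $W$, so $F$ restricts to a biholomorphism (with multiplier coordinates) of $W$ onto $V$. The normalization $\phi^* \rho_0 = \rho_0$ forces $F(0) = 0$.

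Next I would use the isometry hypothesis to see that $F$ is linear. The idea is the classical Cartan-type argument: since $F$ is a contractive holomorphic self-map pattern (more precisely, $\|F\| \le 1$ in sup norm, $F(0)=0$, and $F$ has a holomorphic inverse $G$ with $\|G\|\le 1$, $G(0)=0$), the Schwarz lemma on the ball forces the derivative $F'(0)$ to be an isometry from $\spn W$ into $\mb{C}^d$ and $F = F'(0)$ on $\spn W$. Concretely, because $\phi$ is an isometric \emph{isomorphism}, the composition $g = f \circ F$ has the same multiplier norm as $f$; restricting attention to the coordinate functions and their finite linear combinations, and using that the multiplier norm dominates the sup norm (which on $\mb{B}_d$ detects exactly the contractive linear functionals via $\|(\lambda_1 Z_1 + \dots + \lambda_d Z_d)\|_\infty = \|\lambda\|$), one sees that $\ell \mapsto \ell \circ F$ preserves the norms of linear functionals; an extremal/uniqueness analysis in the Schwarz lemma then yields that $F$ agrees on $\spn W$ with a linear isometry $F'(0)$ onto $\spn V$. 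Since $F(W) = V$ and $F$ is a bijection there, $F'(0)$ carries $\mb{B}_{d'} \cap \spn W$ onto $\mb{B}_d \cap \spn V$ and $W$ onto $V$, giving all the claimed properties with $F := F'(0)$.

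I expect the main obstacle to be the passage from ``isometric isomorphism'' to ``$F$ is the restriction of a linear map.'' Complete isometry would allow one to invoke the structure of unitarily implemented maps directly, but here we have only isometry on the algebra level, so the argument must run through the induced action on characters and the precise extremal case of the Schwarz/Cartan lemma on the ball — specifically, one must argue that the only way an isometric self-composition can preserve the norm of \emph{every} linear functional $\lambda \cdot Z$ is for $F$ to send $\spn W$ linearly and isometrically onto $\spn V$. The subtlety is that $F$ need not be defined or well-behaved off $\spn W$; one should work on the ball of $\spn W$ (or pass to the span) and use that $V$ spans $\spn V$ so that the inverse $G$ provides the matching bound in the other direction, pinning down the linear term of the Taylor expansion and killing the higher-order terms.
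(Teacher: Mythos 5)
The first half of your argument coincides with the paper's: apply Lemma~\ref{L:cc_homo_holo} to $\phi$ and to $\phi^{-1}$ (this is exactly where complete isometry is used, to get row-contractive multiplier coordinates for both $F$ and $G$), and conclude that $F\circ G$ fixes $V$ pointwise and $G\circ F$ fixes $W$ pointwise. But the second half has a genuine gap. The crucial step is to pass from ``$H=F\circ G$ is a holomorphic self-map of $\mb{B}_d$ fixing $0$ and fixing every point of $V$'' to ``$H$ is the identity on all of $\mb{B}_d$.'' The paper does this by citing Rudin's fixed-point theorem \cite[Theorem 8.2.2]{RudinBall}: the fixed-point set of such an $H$ is $\mb{B}_d$ intersected with the fixed subspace of $H'(0)$, an affine set; since it contains the spanning set $V$ it is all of $\mb{B}_d$ (after the harmless reduction $\spn V=\mb{C}^d$, $\spn W=\mb{C}^{d'}$). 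Only then is $F$ a biholomorphic automorphism of the ball fixing $0$, and Cartan's uniqueness theorem \cite[Theorem 2.1.3]{RudinBall} makes it a linear unitary. You acknowledge this subtlety at the end (``killing the higher-order terms'') but never actually resolve it; without knowing $F\circ G=\id$ off of $V$ itself, the Schwarz/Cartan machinery does not apply to $F$.

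Your proposed substitute --- that $\ell\mapsto\ell\circ F$ preserves norms of linear functionals and an extremal Schwarz-lemma analysis then linearizes $F$ --- does not work as stated. The norm of $\lambda_1Z_1+\dots+\lambda_dZ_d$ in $\cM_V$ is the \emph{quotient} norm (Proposition~\ref{prop:complete_quotient}), which is in general strictly smaller than $\|\lambda\|_2$ (e.g.\ when $V$ is a small set near the origin), so preservation of these norms under $\phi$ gives no direct control on $F'(0)$. And even granting that $F'(0)$ is isometric, the vanishing of the higher-order Taylor terms of $F$ is precisely the content of the Cartan-type theorems, which require $F$ to be (or to be part of) a biholomorphism of the full balls --- the fact you have not yet established. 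A minor point: you describe the hypothesis as ``only isometry on the algebra level,'' but the lemma assumes a \emph{completely} isometric isomorphism; that stronger hypothesis is exactly what lets you run Lemma~\ref{L:cc_homo_holo} in both directions, and no extremal analysis of linear functionals is needed anywhere.
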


\begin{proof}
By making $d$ smaller, we may assume that $\mb{C}^d = \spn{V}$. 
Similarly, we may assume $\mb{C}^{d'} = \spn{W}$.

By Lemma \ref{L:cc_homo_holo} applied to $\phi$, there is a holomorphic map 
$F$ of $\mb{B}_{d'}$ into $\mb{B}_d$ that implements $\phi^*$.
Thus $F(W) \subseteq V$ and $F(0)=0$. 
By the same lemma applied to $\phi^{-1}$, there is a holomorphic map $G$
of $\mb{B}_d$ into $\mb{B}_{d'}$ that implements $(\phi^{-1})^*$.
Hence $G(V) \subseteq W$ and $G(0)=0$. 
Now, $\phi^*$ and $(\phi^{-1})^*$ are inverses of each other.
Therefore $F \circ G |_V$ and $G \circ F |_W$ are the identity maps.

Let $H = F \circ G$. 
Then $H$ is a holomorphic map of $\mb{B}_d$ into itself, such that 
$H|_V$ is the identity. In particular $H(0) = 0$. 
By \cite[Theorem 8.2.2]{RudinBall} and Remark~\ref{R:cartan}, 
the fixed point set of $H$ is an affine set equal to 
the fixed point set of $H'(0)$ in $\mb{B}_d$.
Therefore $H$ is the identity on $\mb{B}_d$ since $\mb{C}^d = \spn{V}$. 
Applying the same reasoning to $G \circ F$, we see that $F$ is a biholomorphism of 
$\mb{B}_{d'}$ onto $\mb{B}_d$ such that $F(W) = V$.
In particular, $d'=d$.
It now follows from a theorem of Cartan \cite[Theorem~2.1.3]{RudinBall} and Remark~\ref{R:cartan}
that $F$ is a unitary linear map. 
\end{proof}

Now we combine these lemmas to obtain the main result of this section.

If $V \subseteq \mb{B}_d$ and $W \subseteq \mb{B}_{d'}$ are varieties, 
then we can consider them both as varieties in $\mb{B}_{\max(d,d')}$. 
Therefore, we may assume that $d=d'$. 
This does not change the operator algebras.  See \cite[Remark 8.1]{DRS}.

\begin{theorem}\label{theorem:complete_iso_iff_auto}
Let $V$ and $W$ be varieties in $\mb{B}_d$. 
Then $\cM_V$ is completely isometrically isomorphic to $\cM_W$ if and only 
if there exists an automorphism $F$ of $\mb{B}_d$ such that $F(W) = V$. 

In fact, every completely isometric isomorphism $\phi: \cM_V \rightarrow \cM_W$ 
arises as composition $\phi(f) = f \circ F$ where $F$ is such an automorphism. 
In this case, $\phi$ is unitarily implemented by the unitary sending the 
kernel function $k_w \in \cF_W$ to a scalar multiple of the kernel function 
$k_{F(w)} \in \cF_V$.
\end{theorem}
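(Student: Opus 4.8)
The plan is to deduce Theorem~\ref{theorem:complete_iso_iff_auto} by combining the constructive direction (Proposition~\ref{prop:auto_implemt_iso}) with the two preceding lemmas, after a normalization step that moves a distinguished point to the origin. First I would dispose of the easy direction: if $F$ is an automorphism of $\mb B_d$ with $F(W)=V$, then Proposition~\ref{prop:auto_implemt_iso} already gives that $f\mapsto f\circ F$ is a unitarily implemented completely isometric isomorphism of $\cM_V$ onto $\cM_W$, with the unitary acting on kernel functions as claimed. So the content is the converse, together with the assertion that \emph{every} completely isometric isomorphism has this form.

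For the converse, suppose $\phi:\cM_V\to\cM_W$ is a completely isometric isomorphism. The idea is to reduce to the situation of Lemma~\ref{L:comp_iso_iso_biholo}, where the isomorphism fixes the character $\rho_0$. Pick any point $w_0\in W$ and let $\lambda_0=\pi(\phi^*(\rho_{w_0}))$; by Lemma~\ref{L:cc_homo_holo} applied to $\phi$ (which is in particular unital and completely contractive), $\phi^*(\rho_{w_0})=\rho_{\lambda_0}$ for some $\lambda_0\in V$. Now choose automorphisms $\Psi$ of $\mb B_d$ with $\Psi(0)=\lambda_0$ and $\Theta$ of $\mb B_d$ with $\Theta(w_0)=0$ (conformal automorphisms of the ball act transitively). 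Set $V_0=\Psi^{-1}(V)$ and $W_0=\Theta(W)$; these are varieties containing $0$. By Proposition~\ref{prop:auto_implemt_iso}, composition with $\Psi$ gives a completely isometric isomorphism $\cM_{V}\to\cM_{V_0}$ and composition with $\Theta^{-1}$ gives one $\cM_{W_0}\to\cM_W$, hence their composite $\psi:\cM_{V_0}\to\cM_{W_0}$ is a completely isometric isomorphism. Tracking the induced maps on characters, one checks $\psi^*\rho_0=\rho_0$: indeed $\psi^*=(\text{restriction of }\Theta)\circ\phi^*\circ(\text{restriction of }\Psi)$ on characters, and $\Psi(0)=\lambda_0$, $\phi^*\rho_{\lambda_0}\text{-side sends }\rho_{w_0}$, $\Theta(w_0)=0$, so $\rho_0\mapsto\rho_{w_0}\mapsto\rho_{\lambda_0}\mapsto\rho_0$. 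Then Lemma~\ref{L:comp_iso_iso_biholo} applies to $\psi$: there is a unitary linear map $F_0$ with $F_0(W_0)=V_0$, $F_0(0)=0$, $F_0|_{W_0}=\psi^*$, and $\psi(g)=g\circ F_0$. Unwinding the normalization, $F:=\Psi\circ F_0\circ\Theta$ is an automorphism of $\mb B_d$ with $F(W)=V$, and the corresponding composition map agrees with $\phi$ because each of the three pieces is realized as composition and $\phi^*$ on $W$ determines $\phi$ by Lemma~\ref{L:cc_homo_holo}(4).

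Finally, the unitary-implementation statement follows by assembling the unitaries: $\phi$ equals composition with the automorphism $F$, and Proposition~\ref{prop:auto_implemt_iso} (applied directly to $F$, without the detour) shows $M_{f\circ F}=UM_fU^*$ with $U^*k_w=c_w k_{F(w)}$, i.e.\ $U$ carries $k_w\in\cF_W$ to a scalar multiple of $k_{F(w)}\in\cF_V$. I expect the main obstacle to be the bookkeeping in the normalization step---verifying carefully that after pre- and post-composing with the ball automorphisms the resulting isomorphism genuinely fixes $\rho_0$, and that the composition-with-$F$ description survives being built out of three separate composition maps. The analytic heart of the argument is entirely contained in the two lemmas (the Cartan/fixed-point argument in Lemma~\ref{L:comp_iso_iso_biholo} and the functional-calculus/character identification in Lemma~\ref{L:cc_homo_holo}), so the theorem itself is a matter of correctly orchestrating these and the automorphism transitivity of $\mb B_d$.
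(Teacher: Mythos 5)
Your proposal is correct and follows essentially the same route as the paper: the easy direction and the unitary implementation via Proposition~\ref{prop:auto_implemt_iso}, and the converse by normalizing a point of $W$ and its image under $\phi^*$ to the origin (the paper compresses this into ``we may assume $0\in V$, $0\in W$, $\phi^*(0)=0$'') and then invoking Lemmas~\ref{L:cc_homo_holo} and~\ref{L:comp_iso_iso_biholo}. The only slips are cosmetic bookkeeping in the directions of the conjugating composition maps (e.g.\ composition with $\Theta^{-1}$ carries $\cM_W$ to $\cM_{W_0}$, not the reverse) and the implicit extension of the isometry from $\spn W_0$ to a unitary on all of $\mb{C}^d$, which the paper also notes must be done.
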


\begin{proof}
If there is such an automorphism, then the two algebras are completely
isometrically isomorphic by Proposition~\ref{prop:auto_implemt_iso};
and the unitary is given explicitly there.

Conversely, assume that $\phi$ is a completely isometric isomorphism 
of $\cM_V$ onto $\cM_W$.
By Lemma~\ref{L:cc_homo_holo}, $\phi^*$ maps $W$ into $V$.
Pick a point $w_0 \in W$ and set $v_0 = \phi^*(w_0)$.
By applying automorphisms of $\mb{B}_d$ that move
$v_0$ and $w_0$ to $0$ respectively,
and applying Proposition~\ref{prop:auto_implemt_iso}, 
we may assume that $0 \in V$ and $0 \in W$ and $\phi^*(0)=0$.

Now we apply Lemma~\ref{L:comp_iso_iso_biholo} to obtain an 
isometric linear map $F$ of the ball $\mb{B}_d\cap \spn{W}$  
onto the ball $\mb{B}_d \cap \spn{V}$ such that $F|_W = \phi^*$.
In particular, $\spn W$ and $\spn V$ have the same dimension. 
We may extend the definition of $F$ to a unitary map on $\mb C^d$,
and so it extends to a biholomorphism of $\mb{B}_d$.

Now Proposition~\ref{prop:auto_implemt_iso} yields a unitary which implements
composition by $\phi^*$.  
By Lemma~\ref{L:cc_homo_holo}, every completely isometric isomorphism $\phi$
is given as a composition by $\phi^*$.
So all maps have the form described.
\end{proof}

There is a converse to Lemma \ref{L:cc_homo_holo}, which may provide 
an alternative proof for one half of Theorem \ref{theorem:complete_iso_iff_auto}. 
Arguments like the following are not uncommon in the theory of RKHS; 
see for example \cite[Theorem 5]{Jury07}.

\begin{proposition}\label{prop:row_holo_cc}
Let $V \subseteq \mb{B}_d$ and $W \subseteq \mb{B}_{d'}$ be varieties. 
Suppose that there exists a holomorphic map $F: \mb{B}_{d'} \rightarrow \mb{B}_d$ 
that satisfies $F(W) \subseteq V$, such that the components 
$f_1, \ldots, f_d$ of $F$ form a row contraction of operators in $\cM_{d'}$.
Then the map given by composition with $F$
\[
 \phi(f) = f \circ F \qfor f \in \cM_V 
\]
yields a unital, completely contractive algebra homomorphism of $\cM_V$ into $\cM_W$. 
\end{proposition}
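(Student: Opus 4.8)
The plan is to turn the statement into a fact about Pick matrices, leaning on the complete Nevanlinna--Pick property of $\cF_V$ and $\cF_W$ (these are restrictions of Drury--Arveson space; cf.\ Proposition~\ref{prop:complete_quotient}). Since $F(W)\subseteq V$, the assignment $\phi(f)=f\circ F$ is visibly a well-defined, unital, multiplicative map on functions on $W$, so the only substantive points are that $f\circ F$ actually lies in $\cM_W$ and that the amplifications of $\phi$ are contractive. Concretely, it suffices to prove: for every $k\ge 1$, every $G=[g_{rs}]\in M_k(\cM_V)$ with $\|G\|\le 1$, and all finitely many $\mu_1,\dots,\mu_n\in W$, the $nk\times nk$ block matrix $T$ whose $(i,j)$ block is
\[
 T_{ij}=\big(I_k-(G\circ F)(\mu_i)(G\circ F)(\mu_j)^*\big)\,\frac{1}{1-\langle \mu_i,\mu_j\rangle}
\]
is positive semidefinite. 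Granting this, the complete NP property of $\cF_W$ (exactly as used in the proof of Proposition~\ref{prop:complete_quotient}) shows that $G\circ F\in M_k(\cM_W)$ with $\|G\circ F\|\le 1$; taking $k=1$ shows $\phi$ maps $\cM_V$ contractively into $\cM_W$, and general $k$ that $\phi$ is completely contractive. As $\phi$ is multiplicative and unital, this proves the proposition.

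First I would extract the content of the row-contraction hypothesis. Write $M_F=[\,M_{f_1}\ \cdots\ M_{f_d}\,]$ as a row operator on $H^2_{d'}$, so $M_FM_F^*\le I$. Since $M_{f_l}^*k_\mu=\overline{f_l(\mu)}\,k_\mu$, a direct computation gives, for $\mu,\mu'\in\mb B_{d'}$,
\[
 \big\langle (I-M_FM_F^*)k_{\mu'},\,k_{\mu}\big\rangle \;=\; \frac{1-\langle F(\mu),F(\mu')\rangle}{1-\langle \mu,\mu'\rangle}\;=:\;A(\mu,\mu') .
\]
Hence $A$ is a positive kernel on $\mb B_{d'}$; in particular $[A(\mu_i,\mu_j)]_{i,j}\succeq 0$ for $\mu_i\in W$. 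Because $F$ has values in the open ball, $\langle F(\mu),F(\mu')\rangle\neq 1$, and so we have the purely algebraic identity
\[
 \frac{1}{1-\langle \mu,\mu'\rangle}\;=\;A(\mu,\mu')\cdot\frac{1}{1-\langle F(\mu),F(\mu')\rangle}.
\]

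Now I would substitute this into $T$. Put $\nu_i=F(\mu_i)$; by hypothesis $\nu_i\in V$, and $(G\circ F)(\mu_i)=G(\nu_i)$, so
\[
 T_{ij}\;=\;A(\mu_i,\mu_j)\cdot\big(I_k-G(\nu_i)G(\nu_j)^*\big)\,\frac{1}{1-\langle \nu_i,\nu_j\rangle}.
\]
Let $Q$ be the $nk\times nk$ Pick matrix of $G$ at the points $\nu_i\in V$, i.e.\ the matrix with $(i,j)$ block $\big(I_k-G(\nu_i)G(\nu_j)^*\big)(1-\langle\nu_i,\nu_j\rangle)^{-1}$; this is positive semidefinite by the elementary necessity direction of the Pick characterization applied to the contractive multiplier $G$ of $\cF_V$. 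Let $\widehat A$ be the $nk\times nk$ matrix obtained from $[A(\mu_i,\mu_j)]_{i,j}$ by inflating each scalar entry to a constant $k\times k$ block; then $\widehat A=[A(\mu_i,\mu_j)]_{i,j}\otimes\mathbf 1\mathbf 1^*$ is a Kronecker product of positive semidefinite matrices, hence positive semidefinite. Since $T=\widehat A\circ Q$ (Hadamard product), Schur's product theorem gives $T\succeq 0$, as needed.

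I do not expect a real obstacle. The two points to watch are the bookkeeping in the Hadamard-product step (one must inflate $[A(\mu_i,\mu_j)]$ to \emph{constant} $k\times k$ blocks, not tensor with $I_k$) and the closing appeal to the complete NP property of $\cF_W$ to turn positivity of all finite Pick matrices into an honest bounded multiplier with the norm estimate---precisely the ingredient supplied by \cite{DavPittsPick} in the proof of Proposition~\ref{prop:complete_quotient}. A second route, in the spirit of the universality arguments used elsewhere, would be to check that $(M_{f_1},\dots,M_{f_d})$ is a \emph{pure} commuting row contraction on $H^2_{d'}$, obtain from \cite{Arv98} a weak-$*$ continuous completely contractive unital homomorphism $\Phi:\cM_d\to\cM_{d'}$ with $\Phi(Z_i)=M_{f_i}$, verify on weak-$*$ continuous characters that $\Phi(g)=g\circ F$, and observe that $\Phi$ carries $J_V$ into $J_W$ and hence descends to $\cM_V\to\cM_W$; but establishing purity already requires an argument, whereas the kernel computation above is self-contained.
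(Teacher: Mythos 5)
Your proof is correct and follows essentially the same route as the paper's: positivity of the Pick matrix at $F(\mu_1),\dots,F(\mu_n)$, Schur-multiplied by the positive kernel $\bigl[(1-\langle F(\mu_i),F(\mu_j)\rangle)/(1-\langle\mu_i,\mu_j\rangle)\bigr]$ coming from the row-contraction hypothesis, followed by the complete Nevanlinna--Pick property of $\cF_W$. The only cosmetic difference is that you derive the positivity of that kernel directly from $M_FM_F^*\le I$ on $H^2_{d'}$ and spell out the block-inflation bookkeeping, which the paper leaves implicit.
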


\begin{proof}
Composition obviously gives rise to a unital homomorphism, so all we have 
to demonstrate is that $\phi$ is completely contractive.
We make use of the complete NP property of these kernels.
 
Let $G \in M_k(\cM_V)$ with $\|G\|\leq 1$. 
Then for any $N$ points $w_1$, \dots, $w_N$ in $W$, we get $N$ points
$F(w_1)$, \dots, $F(w_N)$ in $V$.
The fact that $\|G\|\le 1$
implies that the $N \times N$ matrix with $k\times k$ matrix entries
\[
 \begin{bmatrix}
 \dfrac{I_k - (G\circ F)(w_i) (G\circ F)(w_j)^*}{1-\ip{F(w_i), F(w_j)}}
 \end{bmatrix}_{N\times N}
 \ge 0.
\]
Also, since $\|F\|\le1$ as a multiplier on $\cF_W$, we have that
\[
 \begin{bmatrix}
 \dfrac{1-\ip{F(w_i), F(w_j)}}{1-\ip{w_i,w_j}}
 \end{bmatrix}_{N\times N}
 \ge 0.
\]
Therefore the Schur product of these two positive matrices is positive:
\[
 \begin{bmatrix}
 \dfrac{I_k - (G\circ F)(w_i) (G\circ F)(w_j)^*}{1-\ip{w_i,w_j}}
 \end{bmatrix}_{N\times N}
 \ge 0.
\]
Now the complete NP property yields that $G\circ F$ is a contractive multiplier
in $M_k(\cM_W)$.
\end{proof}

%

\section{Isomorphisms of algebras and biholomorphisms} %
\label{sec:isomorphisms_biholomorphisms}

We turn now to the question: \emph{when does there exist an $($algebraic$)$ 
isomorphism between $\cM_V$ and $\cM_W$?} 
This problem is more subtle, and we frequently need to assume that 
the variety sits inside a finite dimensional ambient space. 
Even the construction of the biholomorphism seems to rely
on some delicate facts about complex varieties.

We begin with a well-known automatic continuity result.
Recall that a commutative Banach algebra is \emph{semi-simple} 
if the Gelfand transform is injective.

\begin{lemma}\label{lem:homo_is_cont}
Let $V$ and $W$ be varieties in $\mb{B}_d$. 
Every homomorphism from $\cM_V$ to $\cM_W$ is norm continuous.
\end{lemma}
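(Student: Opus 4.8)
The plan is to reduce this to the standard automatic-continuity machinery for semisimple commutative Banach algebras. First I would recall (or prove in a line) that $\cM_W$ is semisimple: its Gelfand transform is injective because the characters $\rho_\lambda$, $\lambda \in W$, separate points of $\cM_W$ — indeed $\cM_W$ is literally an algebra of functions on $W$, so if $\widehat{f}(\rho_\lambda) = f(\lambda) = 0$ for all $\lambda \in W$ then $f = 0$. (The same holds for $\cM_V$.) This puts us in the hypotheses of the Rickart/Johnson automatic continuity theorem: \emph{every homomorphism from a Banach algebra into a semisimple commutative Banach algebra is continuous}; equivalently, any homomorphism into $\cM_W$ has closed — hence (by the closed graph theorem) trivial — separating ideal.

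The key step is therefore to invoke this theorem. Concretely, let $\varphi \colon \cM_V \to \cM_W$ be a homomorphism and let $\mathfrak{S}(\varphi) = \{ g \in \cM_W : \exists\, f_n \to 0 \text{ in } \cM_V \text{ with } \varphi(f_n) \to g \}$ be the separating space. A standard stability lemma shows $\mathfrak{S}(\varphi)$ is a closed ideal of $\cM_W$, and $\varphi$ is continuous if and only if $\mathfrak{S}(\varphi) = \{0\}$. For each character $\rho$ of $\cM_W$, the composition $\rho \circ \varphi$ is a character of $\cM_V$ (here one uses that $\cM_V$ is unital and commutative, so its characters are automatically continuous of norm one), hence continuous; a further standard lemma then gives that $\rho$ annihilates $\mathfrak{S}(\varphi)$. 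Since this holds for every character $\rho$ and $\cM_W$ is semisimple, $\mathfrak{S}(\varphi) = \{0\}$, so $\varphi$ is continuous.

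The main obstacle, such as it is, is simply citing the right form of the automatic-continuity theorem and confirming that $\cM_V$ and $\cM_W$ genuinely are commutative Banach algebras with separating character space — both of which are immediate from the fact, recorded in Proposition~\ref{prop:complete_quotient} and Proposition~\ref{prop:charM_V}, that $\cM_W$ is a quotient of the commutative algebra $\cM$ consisting of holomorphic functions on $W$, with point evaluations at points of $W$ furnishing enough characters. There is no analytic difficulty here; the entire content is the Banach-algebra folklore result, so the write-up should be a short paragraph establishing semisimplicity and then a one-line appeal to Rickart's theorem (as in, e.g., the treatments of automatic continuity for uniform algebras or commutative $C^*$-algebras).
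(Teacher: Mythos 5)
Your proposal is correct and follows the same route as the paper: the paper simply notes that $\cM_V$ and $\cM_W$ are semisimple (point evaluations separate elements) and then cites the standard automatic-continuity theorem for homomorphisms into semisimple commutative Banach algebras. Your additional sketch of the separating-space argument just unpacks the proof of that cited theorem, so nothing is genuinely different.
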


\begin{proof}
The algebras that we are considering are easily seen to be semi-simple.
A general result in the theory of commutative Banach algebras says that 
every homomorphism into a semi-simple algebra is automatically continuous 
(see \cite[Prop.~4.2]{Dales}).
\end{proof}

\begin{lemma} \label{L:weak_algiso_biholo_discrete}
Let $V$ and $W$ be varieties in $\mb{B}_d$ and $\mb{B}_{d'}$, 
respectively, with $d'<\infty$. 
Let $\phi : \cM_V \rightarrow \cM_W$ be an algebra isomorphism. 
Suppose that $\lambda$ is an isolated point in $W$. Then $\phi^*(\rho_\lambda)$ is an 
evaluation functional at a point in $V$.
\end{lemma}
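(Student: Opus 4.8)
The plan is to exploit the fact that an isolated point $\lambda \in W$ corresponds to a minimal idempotent (rank-one projection) in $\cM_W$, and that isomorphisms preserve such algebraic structure. Since $\lambda$ is isolated in $W$, the singleton $\{\lambda\}$ is itself a (relatively open and closed) subvariety of $W$, so $\cF_W$ splits orthogonally as $\bC k_\lambda \oplus \cF_{W\setminus\{\lambda\}}$, and correspondingly $\cM_W \cong \bC \oplus \cM_{W\setminus\{\lambda\}}$ as a direct sum of algebras via Corollary~\ref{cor:DavPitts1} (the subspace $\bC k_\lambda$ is invariant for $\cM_W$ and its complement is too). Let $e_\lambda \in \cM_W$ be the idempotent that is $1$ at $\lambda$ and $0$ on $W\setminus\{\lambda\}$; it is a minimal idempotent in the sense that $e_\lambda \cM_W e_\lambda = \bC e_\lambda$, because this compressed corner is just the multiplier algebra of the one-point space.

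Next, transport this idempotent across the isomorphism: set $p = \phi^{-1}(e_\lambda) \in \cM_V$. Then $p$ is an idempotent with $p\,\cM_V\, p = \phi^{-1}(e_\lambda \cM_W e_\lambda) = \bC p$, so $p$ is a minimal idempotent of $\cM_V$. The key step is to identify the minimal idempotents of $\cM_V$ with evaluation functionals at isolated points of $V$. Indeed, since $\cM_V$ is semisimple (Lemma~\ref{lem:homo_is_cont} and its proof) and commutative, the range of $p$ in the Gelfand picture is a single character $\rho$, and $p$ is the characteristic function of the set $\{\mu \in \ol V^{\cM} : \rho_\mu = \rho\}$ — but more concretely, $M_p$ is an idempotent in $\B(\cF_V)$ commuting with $\cM_V$. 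I would argue that $M_p$ must be a projection onto a reducing-type subspace: writing $N = \ol{M_p \cF_V}$, $N$ is invariant for $\cM_V$ and its complement too (since $M_p$ and $I - M_p$ are both in the commutant via the abelian, $\mb A_1(1)$ structure), and by Corollary~\ref{cor:DavPitts1} this corresponds to a decomposition $V = V_1 \sqcup V_2$ into two subvarieties, with $p$ the indicator of $V_1$; minimality of $p$ forces $V_1$ to be a single point $v$ which is then relatively open in $V$, i.e. isolated. Evaluating the identity $\phi(f)\cdot e_\lambda = \phi(f)(\lambda)\, e_\lambda$ (which holds because $e_\lambda$ is supported at $\lambda$) and applying $\phi^{-1}$ gives $f \cdot p = \rho_v(f)\, p$ for all $f$, whence $\phi^*(\rho_\lambda) = \rho_v$, an evaluation at the point $v \in V$.

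The main obstacle is the identification step: showing a minimal idempotent of $\cM_V$ necessarily comes from an isolated point of $V$. The subtlety is that a priori the idempotent $M_p$ need not be selfadjoint, and one must rule out the possibility that its range subspace, while invariant, does not split off a genuine relatively-clopen subvariety containing only one point. I expect to handle this by combining the commutation of $M_p$ with all of $\cM_V$, the semisimplicity (so $M_p$ is determined by where its Gelfand transform is $1$ versus $0$, a clopen subset $S$ of $M(\cM_V)$), and the lattice correspondence of Corollary~\ref{cor:DavPitts1} to conclude $S \cap V$ is a relatively clopen subvariety; then minimality of the corner $p\cM_V p = \bC p$ forces $S \cap V$ to be a single point (any subvariety with more than one point has multiplier algebra strictly larger than $\bC$, e.g.\ it separates two of its points by a coordinate function), and relative clopenness in $V \subseteq \bB_d$ with $V$ locally a variety makes that point isolated. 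The finite-dimensionality hypothesis $d' < \infty$ enters only to guarantee $\phi^*(\rho_\lambda)$ lands at an interior point rather than in the corona — but in fact once we know $\phi^*(\rho_\lambda) = \rho_v$ with $v \in V \subseteq \bB_d$, this is automatic; the hypothesis is more naturally needed in the converse direction and in later results, so I would double-check whether it is genuinely used here or is a standing assumption carried along for uniformity with the surrounding lemmas.
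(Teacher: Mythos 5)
Your overall strategy---transport an idempotent supported at $\lambda$ across $\phi$---is close in spirit to the paper's argument, but as written there are two genuine gaps. First, the existence of the idempotent $e_\lambda\in\cM_W$ is not established. The claimed orthogonal splitting $\cF_W=\bC k_\lambda\oplus\cF_{W\setminus\{\lambda\}}$ is false (kernel functions at distinct points are never orthogonal), and more seriously it is not clear that $W\setminus\{\lambda\}$ is a variety in the paper's sense, i.e.\ that some multiplier vanishes on $W\setminus\{\lambda\}$ but not at $\lambda$; the paper explicitly flags its ignorance about such complementary-subvariety questions. The clean way to get $e_\lambda$ is Shilov's idempotent theorem applied to $\rho_\lambda$ as an isolated point of the \emph{full} character space $M(\cM_W)$---and this is exactly where $d'<\infty$ enters: $\pi$ is continuous from $M(\cM_W)$ into $\ol{\mb{B}}_{d'}$ with the weak topology, so a norm-isolated point of $W$ yields an isolated character only when the weak and norm topologies on the ball agree. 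Your closing speculation that the hypothesis $d'<\infty$ is ``carried along for uniformity'' is therefore a misreading; it is load-bearing at the very first step. Second, your identification of the minimal idempotent $p=\phi^{-1}(e_\lambda)$ with a point evaluation never rules out the possibility that the single character supporting the Gelfand transform of $p$ lies in the corona $M(\cM_V)\setminus V$, in which case you would only conclude that $\phi^*(\rho_\lambda)$ equals that corona character. The missing (one-line) observation is that an element of $\cM_V$ vanishing at every point of $V$ is the zero multiplier, so a nonzero idempotent cannot be supported entirely off $V$.

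For comparison, the paper's proof is essentially the mirror image of yours and much shorter: $\rho_\lambda$ is isolated in $M(\cM_W)$ (using $d'<\infty$ as above), $\phi^*$ is a homeomorphism of character spaces (by automatic continuity, Lemma~\ref{lem:homo_is_cont}), so $\phi^*(\rho_\lambda)$ is isolated in $M(\cM_V)$; Shilov's theorem then produces the characteristic function of $\{\phi^*(\rho_\lambda)\}$ as an idempotent in $\cM_V$, and if $\phi^*(\rho_\lambda)$ were in the corona this idempotent would vanish on all of $V$, hence be $0$, contradicting that its Gelfand transform is nonzero. Applying Shilov directly on the $V$ side avoids both issues above; the minimal-idempotent analysis and the lattice correspondence of Corollary~\ref{cor:DavPitts1} are not needed.
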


\begin{proof}
The character $\rho_\lambda$ is an isolated point in $M(\cM_W)$. 
(Here is where we need $d'<\infty$). 
Since $\phi^*$ is a homeomorphism, $\phi^*(\rho_\lambda)$ must also be an 
isolated point in $M(\cM_V)$. 
By Shilov's idempotent theorem (see \cite[Theorem 21.5]{BonsallDuncan}), 
the characteristic function $\upchi_{\phi^*(\rho_\lambda)}$ of $\phi^*(\rho_\lambda)$
belongs to $\cM_V$. 
Now suppose that $\phi^*(\rho_\lambda)$ is in the corona $M(\cM_V) \setminus V$. 
Then $\upchi_{\phi^*(\rho_\lambda)}$ vanishes on $V$.
Therefore, as an element of a multiplier algebra, this means that 
$\upchi_{\phi^*(\rho_\lambda)} = 0$. 
Therefore $\upchi_{\phi^*(\rho_\lambda)}$ must vanish on the entire maximal ideal 
space, which is a contradiction. 
Thus $\phi^*(\rho_\lambda)$ lies in $V$.
\end{proof}

Next we want to show that any algebra isomorphism $\phi$ between $\cM_V$ and $\cM_W$
must induce a biholomorphism between $W$ and $V$. This identification will be the
restriction of $\phi^*$ to the characters of evaluation at points of $W$. 
In order to achieve this, we need to make some additional assumptions.

Our difficulty is basically that we do not have enough information about varieties.
In the classical case, if one takes a regular point $\lambda \in V$, 
takes the connected component of $\lambda$ in the set of all regular points of $V$, 
and closes it up (in $\mb{B}_d$), then one obtains a subvariety. Moreover
the closure of the complement of this component is also a variety 
\cite[ch.3, Theorem~1G]{Whitney}.

However our varieties are the intersections of zero sets of a family of multipliers.
Let us say that a variety $V$ is \textit{irreducible} if for any regular point $\lambda \in V$,
the intersection of zero sets of all multipliers vanishing on a small neighbourhood 
$V\cap b_\ep(\lambda)$ is exactly $V$.
We do not know, for example, whether an irreducible variety in our sense is connected.
Nor do we know that if we take an irreducible subvariety of a variety, 
then there is a complementary subvariety as in the classical case.

A variety $V$ is said to be {\em discrete} if it has no accumulation points in $\mb{B}_d$. 

We will resolve the isomorphism problem in two situations. 
The first is the case of a finite union of
irreducible varieties and a discrete variety. 
The second is the case of an isometric isomorphism.
In the latter case, the isomorphism will turn out to be completely isometric.
This yields a different approach to the results of the previous section. 
In either case we will need to assume that $d<\infty$. 

We need some information about the maximal ideal space $M(\cM_V)$.
Recall that there is a canonical projection $\pi$ into $\ol{\mb{B}}_d$ obtained by
evaluation at $[Z_1,\dots,Z_d]$. For any point $\mu$ in the unit sphere,
$\pi^{-1}(\mu)$ is the fiber of $M(\cM_V)$ over $\mu$.
We saw in Proposition~\ref{prop:charM_V} that when $d<\infty$, for $\lambda\in\mb{B}_d$,
$\pi^{-1}(\lambda)$ is the singleton $\{\rho_\lambda\}$, the point evaluation at $\lambda$. 

By Proposition \ref{prop:weak_star_homo}, in order to show that an isomorphism is given by composition with a biholomorphism, we need to show that $\phi^*$ takes weak-$*$ continuous characters to weak-$*$ continuous characters. Since fibers over internal points of the ball are singletons, it suffices to show that $F_\phi$ takes values in the open ball. Since $\pi$ is injective on $\pi^{-1}(V)$ (when $d<\infty$) we may identify $\phi^*\big|_W$ with $F_\phi$. 

The following lemma is analogous to results about Gleason parts for function algebras.
However part (2) shows that this is different from Gleason parts, as 
disjoint subvarieties of $V$ will be at a distance of less than 2 apart.
This is because $\cM_V$ is a (complete) quotient of $\cM_d$, and thus
the difference $\|\rho_\lambda-\rho_\mu\|$ is the same whether evaluated as
functionals on $\cM_V$ or $\cM_d$. In the latter algebra, $\lambda$ and $\nu$
do lie in the same Gleason part.

\begin{lemma} \label{L:fibers} 
Let $V$ be a variety in $\mb{B}_d$. 
\begin{enumerate}
\item Let $\phi \in \pi^{-1}(\mu)$ for $\mu\in\partial \mb{B}_d$.
Suppose that $\psi \in M(\cM_V)$ satisfies $\|\psi-\phi\|<2$.
Then $\psi$ also belongs to $\pi^{-1}(\mu)$.

\item If $\lambda$ and $\mu$ belong to $V$, then $\|\rho_\mu-\rho_\lambda\| \le 2r <2$,
where $r$ is the pseudohyperbolic distance between $\mu$ and $\lambda$.
\end{enumerate}
\end{lemma}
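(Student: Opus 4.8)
The plan is to treat both parts by passing from $\cM_V$ to $\cM_d$ itself, using that $\cM_V$ is a \emph{complete} quotient of $\cM_d$ (Proposition~\ref{prop:complete_quotient}) and that norms of differences of characters are computed by testing against contractive multipliers, which lift. For part (2), I would reduce to the disc: given $\lambda,\mu\in V$, choose a single coordinate direction or more simply exhibit a contractive multiplier $b\in\cM_d$ with $b(\lambda)$ and $b(\mu)$ having pseudohyperbolic distance $r$; the obvious candidate is a M\"obius-type multiplier built from $\langle z,\cdot\rangle$, or one can invoke that $\lambda$ and $\mu$ lie in the same Gleason part of $\cM_d$ and that the Gleason distance there is controlled by the pseudohyperbolic (Kobayashi/Bergman) distance of $\mathbb B_d$. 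Concretely, I expect to use that for $\|f\|\le 1$ in $\cM_d$, $|f(\lambda)-f(\mu)|\le \frac{|f(\lambda)-f(\mu)|}{|1-\overline{f(\mu)}f(\lambda)|}\cdot|1-\overline{f(\mu)}f(\lambda)|$ and then bound $\frac{|f(\lambda)-f(\mu)|}{|1-\overline{f(\mu)}f(\lambda)|}\le r$ (Schwarz–Pick along the holomorphic function $f$, since $f$ maps $\mathbb B_d$ into $\overline{\mathbb D}$ and $r$ is the Kobayashi distance), while $|1-\overline{f(\mu)}f(\lambda)|\le 2$. Taking the supremum over such $f$ gives $\|\rho_\lambda-\rho_\mu\|\le 2r<2$, and since the quotient map $\cM_d\to\cM_V$ only shrinks the dual norm — but in fact, as noted in the remark preceding the lemma, it preserves it because the quotient is complete — the same bound holds in $\cM_V$.

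For part (1), the key input is the classical ``fibers are at mutual distance $2$'' phenomenon for $H^\infty$ of the disc, transported to the ball. Suppose $\phi\in\pi^{-1}(\mu)$ with $\mu\in\partial\mathbb B_d$ and $\psi\in M(\cM_V)$ with $\pi(\psi)=\nu\neq\mu$. I want to produce a contractive multiplier $f\in\cM_d$ with $|\phi(f)-\psi(f)|$ close to $2$, forcing $\|\psi-\phi\|=2$ by contraposition. First I would reduce to the scalar case by composing with the linear functional $z\mapsto\langle z,\mu\rangle$ followed by a suitable M\"obius transformation, or equivalently: since $\pi(\phi)=\mu$ and $\pi(\psi)=\nu$ are distinct points of $\overline{\mathbb B_d}$ with $\mu$ on the sphere, pick a holomorphic $h:\mathbb B_d\to\mathbb D$ (a coordinate of an automorphism of the ball) carrying $\mu\mapsto 1$ and $\nu$ into $\mathbb D$ or to a different boundary point. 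Then $h$ is a contractive multiplier in $\cM_d$, and $\phi(h)$ lies on $\partial\mathbb D$ (being the image of a character in the fiber over the boundary) while $\psi(h)$ lies strictly inside, or in a different boundary fiber. Now apply the $H^\infty$ fiber result: if $\phi(h)\in\partial\mathbb D$ then in the algebra $\cM_1=H^\infty_{\mathbb D}$-side one has that any character not in the same fiber over $\phi(h)$ is at distance exactly $2$; pulling an appropriate $g\in H^\infty(\mathbb D)$ with $\|g\|\le 1$ and $g\circ h\in\cM_d$ (composition with a contractive multiplier stays a contractive multiplier, by Proposition~\ref{prop:row_holo_cc} with $d=1$) separates $\phi$ and $\psi$ by nearly $2$. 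The contrapositive is exactly the statement.

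The main obstacle I anticipate is the passage in part (1) from the abstract character $\phi$ over $\mu\in\partial\mathbb B_d$ to the one-variable situation in a way that genuinely controls $\psi$: one must ensure that the auxiliary multiplier $h$ actually sends $\phi$ into the boundary fiber over $1$ in $M(H^\infty(\mathbb D))$ and sends $\psi$ somewhere separated from it, and that composition with $h$ is a well-defined (completely contractive) map $\cM_1\to\cM_d$ so that the separating function pulls back to $\cM_d$ and then descends to $\cM_V$. The cleanest route is: (i) note $\cM_V$ is a complete quotient of $\cM_d$, so it suffices to separate the induced characters $\tilde\phi,\tilde\psi$ on $\cM_d$ by nearly $2$; (ii) choose $h$ a linear-fractional coordinate of a ball automorphism with $h(\mu)=1$, so $\tilde\phi(h)$ is a boundary character value in the disc over $1$ and $\tilde\psi(h)=:a$ with either $|a|<1$ or $a$ in a different boundary point (here we use $\nu\neq\mu$ plus the Schwarz lemma); (iii) invoke Hoffman's theorem that distinct fibers of $M(H^\infty)$ are at distance $2$, realized by an interpolating Blaschke product $b$; (iv) set $f=b\circ h\in\cM_d$, which is a contractive multiplier by the $d=1$ case of Proposition~\ref{prop:row_holo_cc}, and check $|\tilde\phi(f)-\tilde\psi(f)|$ is within $\varepsilon$ of $2$. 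Since $\varepsilon$ is arbitrary this gives $\|\phi-\psi\|=2$, contradicting the hypothesis, so $\nu=\mu$, i.e. $\psi\in\pi^{-1}(\mu)$. Part (2) is routine once the Schwarz–Pick estimate above is in place.
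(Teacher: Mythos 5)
Your proof of part (2) is essentially the paper's: lift a unit-ball element of $\cM_V$ to a contractive multiplier on $\mb{B}_d$ (this is where the complete quotient property, Proposition~\ref{prop:complete_quotient}, is used), apply the Schwarz--Pick estimate to get $\bigl|\tfrac{f(\mu)-f(\lambda)}{1-f(\mu)\ol{f(\lambda)}}\bigr|\le r$, and multiply by $|1-f(\mu)\ol{f(\lambda)}|\le 2$; your observation that the dual norm is unchanged in passing from $\cM_V$ to $\cM_d$ is exactly the remark the authors make before the lemma. For part (1) you take a genuinely different and considerably heavier route. The paper simply applies a ball automorphism moving $\mu$ to $(1,0,\dots,0)$ and $\nu=\pi(\psi)$ to $(-1,0,\dots,0)$ (or to $(-1+\ep,0,\dots,0)$ when $\psi=\rho_\lambda$ is an interior evaluation), invokes Proposition~\ref{prop:auto_implemt_iso} to see this is a complete isometry, and then tests against the single coordinate multiplier $Z_1$ to get $\|\psi-\phi\|\ge|(\psi-\phi)(Z_1)|=2$ (resp.\ $2-\ep$, then let $\ep\to0$). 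You instead push everything into one variable via a coordinate $h$ of a ball automorphism and invoke Hoffman's theory that distinct fibers of $M(H^\infty)$ are at distance $2$, pulling back a separating function through the completely contractive composition map of Proposition~\ref{prop:row_holo_cc}. This works --- the characters do evaluate such rational $h$ at $\mu$ and $\nu$ because $h$ is a norm limit of polynomials in the $Z_i$ --- but the detour through Hoffman and interpolating Blaschke products is unnecessary: once you have $h$ with $h(\mu)=1$ and $h(\nu)=-1$ (or $-1+\ep$), the function $h$ itself already separates $\phi$ and $\psi$ by $2$ (or $2-\ep$), which is precisely the paper's one-line argument. What your version buys is a self-contained reduction to classical one-variable corona geometry; what the paper's buys is brevity and no reliance on any structure of $M(H^\infty)$ beyond the values of the coordinate functions.
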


\begin{proof}
If $\psi\in\pi^{-1}(\nu)$ for $\nu\ne\mu$ in the sphere, then there is an automorphism of
$\mb{B}_d$ that takes $\mu$ to $(1,0,\dots,0)$ and $\nu$ to $(-1,0,\dots,0)$.
Proposition~\ref{prop:auto_implemt_iso} shows that composition by this automorphism is
a completely isometric automorphism. 
So we may suppose that $\mu=(1,0,\dots,0)$ and $\nu=(-1,0,\dots,0)$.
But then 
\[ \|\psi-\phi\| \ge |(\psi-\phi)(Z_1)| = 2 .\]

Similarly, if $\psi = \rho_\lambda$ for some $\lambda \in V$, 
then for any $0 < \ep < 1$, there is an automorphism of
$\mb{B}_d$ that takes $\mu$ to $(1,0,\dots,0)$ and $\nu$ to $(-1+\ep,0,\dots,0)$.
The same conclusion is reached by letting $\ep$ decrease to $0$.

If $\lambda$ and $\mu$ belong to $V$, then there is an automorphism $\gamma$ of $\mb B_d$
sending $\lambda$ to $0$ and $\mu$ to some $v:=(r,0,\dots,0)$ where $0<r<1$ 
is the pseudohyperbolic distance between $\lambda$ and $\mu$.
Given any multiplier $f \in \cM_V$ with $\|f\|=1$,  Proposition~\ref{prop:complete_quotient}
provides a multiplier $\tilde f$ in $\cM_d$ so that $\tilde f|_V=f$ and $\|\tilde f\|=1$.
In particular, $\tilde f\circ\gamma^{-1}$ is holomorphic on $\mb B_d$ and $\|\tilde f\circ\gamma^{-1}\|_\infty \le 1$.
Hence the Schwarz Lemma \cite[Theorem~8.1.4]{RudinBall} shows that 
\[ 
\left| \frac{f(\mu) - f(\lambda)}{1- f(\mu)\ol{f(\lambda)}}\right| = \left| \frac{\tilde f\circ\gamma^{-1}(v) - \tilde f\circ\gamma^{-1}(0)}{1- \tilde{f}\circ\gamma^{-1}(v)\ol{\tilde{f}\circ\gamma^{-1}(0)}}\right| \le  r .\]
Hence
\[ 
 \| \rho_\mu - \rho_\lambda \| = 
 \sup_{\|f\|\le1} |(\rho_\mu-\rho_\lambda)(f)| \le 
 r \sup_{\|f\|\le1} | 1- f(\mu)\ol{f(\lambda)}| 
 \le 2r . \qedhere
\]
\end{proof}

This provides some immediate information about norm continuous maps between
these maximal ideal spaces.

\begin{corollary}\label{C:fibers}
Let $V$ and $W$ be varieties in $\bB_d$, $d<\infty$. 
Suppose that $\phi$ is a continuous algebra homomorphism of $\cM_V$ into $\cM_W$.
\begin{enumerate}
\item Then $\phi^*$ maps each irreducible subvariety of $W$ into $V$ or into a single
fiber of the corona. 

\item If $\phi$ is an isomorphism, and $V$ and $W$ are the disjoint union of finitely 
many irreducible subvarieties, then $\phi^*$ must map $W$ onto $V$.

\item If $\phi$ is an isometric isomorphism, then $\phi^*$ maps $W$ onto $V$ and preserves the
pseudohyperbolic distance.

\end{enumerate}
\end{corollary}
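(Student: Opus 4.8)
The plan is to combine Lemma~\ref{L:fibers} with the earlier structural results, handling the three parts in order, since (2) builds on (1) and (3) builds on (2).

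For part (1), let $W_0$ be an irreducible subvariety of $W$. Since $\phi^*$ is continuous and $W_0$ is connected (or, lacking connectedness in our sense, one should argue on a connected piece and then invoke irreducibility to spread the conclusion), the image $\phi^*(W_0)$ is a connected subset of $M(\cM_V)$. By Proposition~\ref{prop:charM_V}, $M(\cM_V)$ is the disjoint union of $V$ and the corona, where the corona is fibered over $\ol V^{\cM}\setminus V\subseteq\partial\mb B_d$. The key point is that Lemma~\ref{L:fibers}(1) shows each corona fiber $\pi^{-1}(\mu)$ is relatively clopen in the corona, since any character within distance $2$ of a point of $\pi^{-1}(\mu)$ again lies in $\pi^{-1}(\mu)$, and distinct fibers are at distance exactly $2$. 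So the corona is partitioned into clopen pieces $\pi^{-1}(\mu)$. Meanwhile $V$ itself, by Lemma~\ref{L:fibers}(2), has the property that any two of its points are at distance less than $2$; and again any point of the corona is at distance $2$ from... no---here one must be slightly careful, since a point of $V$ near the boundary could in principle be close to a corona fiber. The cleaner route: Lemma~\ref{L:fibers}(1) applied with $\psi\in V$ (a point evaluation) shows that if $\psi=\rho_\lambda$ is within distance $2$ of some $\phi\in\pi^{-1}(\mu)$ with $\mu\in\partial\mb B_d$, then $\rho_\lambda\in\pi^{-1}(\mu)$, which is impossible since $\lambda\in\mb B_d$. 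Hence $V$ is at distance $\ge 2$ from each corona fiber, so $V$ together with the family $\{\pi^{-1}(\mu)\}$ is a clopen partition of $M(\cM_V)$. A connected set must therefore lie entirely in $V$ or entirely in a single fiber; applying this to the connected regular locus of $W_0$ and using the definition of irreducibility (the multipliers vanishing on a neighbourhood of a regular point vanish on all of $W_0$), we conclude $\phi^*(W_0)\subseteq V$ or $\phi^*(W_0)\subseteq\pi^{-1}(\mu)$ for one $\mu$.

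For part (2), write $W=W_1\sqcup\dots\sqcup W_m$ as a disjoint union of irreducible subvarieties. By part (1), each $\phi^*(W_j)$ lies in $V$ or in a single corona fiber. Suppose some $\phi^*(W_j)$ lies in a corona fiber $\pi^{-1}(\mu)$. Since $\phi$ is an isomorphism, $\phi^*$ is a homeomorphism of $M(\cM_W)$ onto $M(\cM_V)$, and the same analysis applies to $(\phi^{-1})^*=(\phi^*)^{-1}$: it maps $V$ and each corona fiber of $\cM_V$ into $V$-pieces or fiber-pieces of $\cM_W$. But $W$ is a union of finitely many irreducible subvarieties with no corona among them (the corona of $\cM_W$ is $M(\cM_W)\setminus W$), so $(\phi^*)^{-1}$ must carry the corona fiber $\pi^{-1}(\mu)\subseteq M(\cM_V)$ into the corona of $\cM_W$. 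Then $\phi^*(W_j)\subseteq\pi^{-1}(\mu)$ forces $W_j\subseteq(\phi^*)^{-1}(\pi^{-1}(\mu))\subseteq M(\cM_W)\setminus W$, contradicting $W_j\subseteq W$. Hence every $\phi^*(W_j)\subseteq V$, so $\phi^*(W)\subseteq V$; by symmetry $(\phi^*)^{-1}(V)\subseteq W$, and since these maps are mutually inverse we get $\phi^*(W)=V$.

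For part (3), suppose in addition $\phi$ is isometric. By part (2) (isometric isomorphisms are in particular isomorphisms, but we cannot yet invoke the finite-union hypothesis here; instead note that part (1) already gives, for an isometric $\phi$, that $\phi^*$ maps $W$ into $V$—every point evaluation $\rho_\lambda$ with $\lambda\in W$ has some point of $V$ within distance $<2$ by the same clopen-partition argument, provided we know $\phi^*(\rho_\lambda)\notin$ corona; and an isometry applied to the estimate $\|\rho_\mu-\rho_\lambda\|\le 2r<2$ of Lemma~\ref{L:fibers}(2) shows $\phi^*(\rho_\lambda)$ cannot be in a corona fiber since such fibers are at distance $2$). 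Applying the same to $\phi^{-1}$ gives $\phi^*(W)=V$. Finally, for $\lambda,\mu\in W$ with pseudohyperbolic distance $r$, and $F(\lambda),F(\mu)$ their images in $V$ with pseudohyperbolic distance $s$, the map of characters is isometric so $\|\rho_{F(\lambda)}-\rho_{F(\mu)}\|=\|\rho_\lambda-\rho_\mu\|$. One then computes both norms exactly: the Schwarz–Pick computation in Lemma~\ref{L:fibers}(2) in fact yields $\|\rho_\mu-\rho_\lambda\|=2r$ (the inequality there is an equality, since testing against a suitable automorphism coordinate of the ball saturates it), hence $2r=2s$, i.e. $\phi^*$ preserves the pseudohyperbolic distance.

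The main obstacle is the subtlety around connectedness and irreducibility flagged in the paper's own remarks: the clopen-partition argument in part (1) delivers "contained in $V$ or in a single fiber" only for connected sets, whereas our varieties are not known to be connected. The fix is to run the argument on the connected components of the regular locus of an irreducible $W_0$ and then use the definition of irreducibility to conclude the same for all of $W_0$ from any one regular point; making this transition airtight—ensuring that a multiplier separating $V$ from the corona on a neighbourhood of one regular point of $W_0$ actually pins down $\phi^*(W_0)$ globally—is the delicate step, and is exactly why the hypothesis in part (2) is a \emph{finite} disjoint union of \emph{irreducible} subvarieties.
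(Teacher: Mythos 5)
Your part (1) is essentially the paper's argument (the clopen decomposition of $M(\cM_V)$ into $V$ and the corona fibers coming from Lemma~\ref{L:fibers}, plus irreducibility to pass from one component to the whole subvariety), but you never actually produce the multiplier that makes the irreducibility step run. The paper's device is concrete: after normalizing the fiber to sit over $(1,0,\dots,0)$, the function $h=\phi(Z_1)-1\in\cM_W$ vanishes at $w\in W$ exactly when $\phi^*(\rho_w)$ lies in that fiber; so $h$ vanishes on a neighbourhood of the regular point $\lambda$ in $W_1$ and hence, by irreducibility, on all of $W_1$. You flag this as ``the delicate step'' but leave it unexecuted, and it is also the ingredient your parts (2) and (3) are missing.

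Part (2) as written is circular. You assume $\phi^*(W_j)\subseteq\pi^{-1}(\mu)$ and then assert that $(\phi^*)^{-1}$ carries $\pi^{-1}(\mu)$ into the corona of $M(\cM_W)$ --- but that assertion is precisely the negation of your hypothesis (under the hypothesis, $W_j\subseteq(\phi^*)^{-1}(\pi^{-1}(\mu))$, so the preimage of the fiber meets $W$), and part (1) gives you no control over corona fibers: they are not irreducible subvarieties, and they are not known to be norm-connected (they contain many Gleason parts at mutual distance $2$), so the clopen-partition argument does not apply to them. The paper's proof is genuinely function-theoretic instead: choose $h=h_2\cdots h_n$ vanishing on $W_2\cup\dots\cup W_n$ but not on $W_1$, set $f=\phi^{-1}(h)\neq0$, and observe that $(Z_1-1)f$ is a nonzero element of $\cM_V$ with $\phi((Z_1-1)f)$ vanishing on all of $W$, contradicting injectivity. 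Part (3) has the same hole: isometry of $\phi^*$ plus Lemma~\ref{L:fibers}(2) shows the image of $W$ lies in a \emph{single} piece of the partition, but nothing you wrote rules out that piece being one corona fiber --- again one needs $\phi(Z_1-1)=0$ to get the contradiction. Finally, your claimed identity $\|\rho_\mu-\rho_\lambda\|=2r$ is false: Lemma~\ref{L:fibers}(2) only gives $\le 2r$, and the supremum $\sup_{\|f\|\le1}|1-f(\mu)\ol{f(\lambda)}|$ cannot equal $2$ since $|f|<1$ at interior points unless $f$ is constant (the correct value is $2r/(1+\sqrt{1-r^2})$, or in the paper's form $r\sup_{\|f\|\le1}|1-f(\mu)\ol{f(\lambda)}|$). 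Your strategy of showing that $\|\rho_\mu-\rho_\lambda\|$ is a strictly increasing function of $r$ alone is sound and matches the paper's intent, but the formula must be corrected.
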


\begin{proof}
(1) Let $W_1$ be an irreducible subvariety of $W$, and let
$\lambda$ be any regular point of $W_1$. We do not assert that $W_1$ is connected.

Suppose that $\phi^*(\rho_\lambda)$ is a point evaluation at some point $\mu$ in $\mb B_d$. 
Then by Proposition~\ref{prop:charM_V}, $\mu$ belongs to $V$.
Since $\phi$ is norm continuous, by Lemma~\ref{L:fibers} it must map the connected 
component of $\lambda$ into a connected component of $V$.

Similarly, suppose that $\rho_\lambda$ is mapped by $\phi^*$ into a fiber of the corona.
Without loss of generality, we may suppose that it is the fiber over $(1,0,\dots,0)$.
Since $\phi$ is norm continuous, by Lemma~\ref{L:fibers} it must map the connected 
component of $\lambda$ into this fiber as well.
Suppose that there is some point $\mu$ in $W_1$ mapped into $V$ or into another fiber.
So the whole connected component of $\mu$ is also mapped into $V$ or another fiber. 
Then the function $h = \phi(Z_1) - 1$ vanishes on the component of $\lambda$ but
does not vanish on the component containing $\mu$.
This contradicts the fact that $W_1$ is irreducible.
Thus the whole subvariety must map entirely into a single fiber or entirely into $V$.

(2) Suppose that $W$ is the union of irreducible subvarieties $W_1, \dots,W_n$.
Fix a point $\lambda \in W_1$. For each $2\le i \le n$, there is a multiplier $h_i \in \cM_{d'}$
which vanishes on $W_i$ but $h_i(\lambda) \ne 0$. 
Hence $h = h_2 h_3 \cdots h_k |_{W}$ belongs to $\cM_W$ and vanishes on 
$\cup_{i=2}^k W_i$ but not on $W_1$. 
Therefore $\phi^{-1}(h) = f$ is a non-zero element of $\cM_V$.
Suppose that $\phi^*(W_1)$ is contained in a fiber over a point in the boundary of
the sphere, say $(1,0,\dots,0)$. 
Since $Z_1-1$ is non-zero on $V$, we see that $(Z_1-1)f$ is not the zero function.
However, $(Z_1-1)f$ vanishes on $\phi^*(W_1)$.
Therefore $\phi((Z_1-1)f)$ vanishes on $W_1$ and on $\cup_{i=2}^k W_i$.
Hence $\phi((Z_1-1)f) = 0$, contradicting injectivity.
We deduce that $W_1$ is mapped into $V$.

By interchanging the roles of $V$ and $W$, we deduce that $\phi^*$ must map $W$ onto $V$.

(3) In the isometric case, we can make use of \cite[Corollary 6.6]{KenYan12}, which says that $\M_V$ and $\M_W$ have strongly unique preduals, from which it follows that $\phi^*$ preserves weak-$*$ continuous functionals. Thus $W$ is mapped into $V$.
Reversing the role of $V$ and $W$ shows that this map is also onto $V$.

The proof of Lemma~\ref{L:fibers}(2) actually yields more information, namely that 
$\|\rho_\lambda-\rho_\mu\|$ is a function of the pseudohyperbolic distance $r$, 
\[ \|\rho_\lambda-\rho_\mu\| = r \sup_{\|f\|\le1} | 1- f(\mu)\ol{f(\lambda)}| .\] 
In the proof of that lemma we only used that the left hand side is less than or equal to the right hand side, but it is easy to see that one obtains equality by choosing a particular $f$. So the fact that the quantities $\|\rho_\lambda-\rho_\mu\|$ and $\sup_{\|f\|\le1} | 1- f(\mu)\ol{f(\lambda)}|$ are preserved by an isometric isomorphism implies that the
pseudohyperbolic distance $r$ is also preserved.
\end{proof}

\begin{remarks} \label{R:fiber}
(1) In a previous version of this paper, we claimed incorrectly that if $\phi$ is a surjective
continuous homomorphism of $\cM_V$ onto $\cM_W$, then $\phi^*$ must map $W$
into $V$. This is false, and we thank Michael Hartz for pointing this out.  
This follows from Hoffman's theory \cite{Hoffman_disks} of analytic disks in 
the corona of $H^\infty$.
There is an analytic map $L$ of the unit disk $\mb D$ into the corona of $M(H^\infty)$,
mapping onto a Gleason part, with the property that $\phi(h)(z) = h (L(z))$ is a homomorphism 
of $H^\infty$ onto itself \cite[ch.X\secsymb 1]{Garnett}.
Therefore the map $\phi^*$ maps the disk into the corona via $L$.

(2) The main obstacle preventing us from establishing part (2) of the corollary in greater
generality is that we do not know that if $\lambda \in W$, then there is an irreducible
subvariety $W_1 \subset W$ containing $\lambda$ and another subvariety $W_2 \subset W$
so that $\lambda \not\in W_2$ and $W=W_1 \cup W_2$. 
As mentioned in the introduction, for any classical analytic variety this is possible
\cite[ch.3, Theorem~1G]{Whitney}. But our definition requires these subvarieties to
be the intersection of zero sets of multipliers. Moreover our proof makes significant use
of these functions. So we cannot just redefine our varieties to have a local definition
as in the classical case even if we impose the restriction that all functions are multipliers.
A better understanding of varieties in our context is needed.

(3) Costea, Sawyer and Wick \cite{CSW} establish a corona theorem for the algebra $\cM_d$ for $d<\infty$.
That is, the closure of the ball $\mb B_d$ in $M(\cM_d)$ is the entire maximal ideal space.
This result may also hold for the quotients $\cM_V$, but we are not aware of any direct proof
deducing this from the result for the whole ball.

A corona theorem for $\cM_V$ would resolve the difficulties in case (2).
The topology on $V = \mb B_d \cap M(\cM_V)$ coincides with the usual one.
In particular, each component has closed complement. The corona theorem
would establish that every open subset of any fiber is in the closure of its complement.
Thus any homeomorphism $\phi^*$ of $M(\cM_W)$ onto $M(\cM_V)$ must take
$W$ onto $V$. 
However it is likely that the corona theorem  for $\cM_V$ is much more difficult than our problem.
\end{remarks}

Now we can deal with the case in which our variety is a finite union of
nice subvarieties, where nice will mean either irreducible or discrete. 

\begin{theorem} \label{theorem:algiso_biholo}
Let $V$ and $W$ be varieties in $\mb{B}_d$, with $d<\infty$,
which are the union of finitely many irreducible varieties and a discrete variety. 
Let $\phi$ be a unital algebra isomorphism of $\cM_V$ onto $\cM_W$. 
Then there exist holomorphic maps $F$ and $G$ from $\mb{B}_{d}$ into $\mb{C}^d$ 
with coefficients in $\cM_d$ such that 
\begin{enumerate}
 \item $F|_{W} = \phi^*|_W \qand  G|_V = (\phi^{-1})^*|_V $
 \item $G \circ F |_W = \id_W \qand F \circ G|_V = \id_V$
 \item $\phi(f) = f \circ F \qfor f \in \cM_V$, and
 \item $\phi^{-1}(g) = g \circ G \qfor g \in \cM_W$. 
\end{enumerate}
\end{theorem}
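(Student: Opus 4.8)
The plan is to combine the automatic continuity of Lemma~\ref{lem:homo_is_cont} with the fibre analysis of this section. Since $\phi$ and $\phi^{-1}$ are norm continuous, $\phi^*$ is a homeomorphism of $M(\cM_W)$ onto $M(\cM_V)$ with inverse $(\phi^{-1})^*$. The crux is to show that $\phi^*$ carries the point evaluations $W\subseteq M(\cM_W)$ \emph{onto} the point evaluations $V\subseteq M(\cM_V)$. Granted this, $F$ and $G$ are obtained by lifting the images of the coordinate functions to multipliers on the ball, exactly as in the proof of Lemma~\ref{L:cc_homo_holo}.

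To prove $\phi^*(W)\subseteq V$, fix the given decomposition $W=W_1\cup\dots\cup W_n\cup D$ with each $W_i$ irreducible and $D$ discrete (which we may take irredundant), and take $\lambda\in W$. If $\lambda$ lies in no $W_i$, then, since the $W_i$ are closed in $\mb{B}_d$ and $D$ is discrete, $\lambda$ is isolated in $W$, and Lemma~\ref{L:weak_algiso_biholo_discrete} --- which is where the hypothesis $d<\infty$ enters --- shows $\phi^*(\rho_\lambda)$ is evaluation at a point of $V$. If instead $\lambda\in W_i$ for some $i$, then by Corollary~\ref{C:fibers}(1) the set $\phi^*(W_i)$ lies either in $V$ or in a single corona fibre, and the second possibility is excluded by the argument in the proof of Corollary~\ref{C:fibers}(2): after moving the fibre over $(1,0,\dots,0)$ by a ball automorphism (Proposition~\ref{prop:auto_implemt_iso}), one uses Proposition~\ref{prop:zero_set} to find $h\in\cM_W$ vanishing on $\bigcup_{j\neq i}W_j\cup D$ but not on $W_i$, observes that $g:=\phi^{-1}(h)$ is non-zero while $Z_1-1$ never vanishes on $V$, and derives a contradiction with injectivity from the fact that $\phi\big((Z_1-1)g\big)=\phi(Z_1-1)\,h$ vanishes on all of $W$. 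Hence $\phi^*(W)\subseteq V$; the same argument applied to $\phi^{-1}$ gives $(\phi^{-1})^*(V)\subseteq W$, and since these maps are mutually inverse, $\phi^*$ restricts to a bijection of $W$ onto $V$.

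Now construct $F$. By Proposition~\ref{prop:complete_quotient} choose, for each $i$, a multiplier $f_i\in\cM_d$ with $f_i|_W=\phi(Z_i)$, and set $F=(f_1,\dots,f_d):\mb{B}_d\to\mb{C}^d$, a holomorphic map with coefficients in $\cM_d$. For $\lambda\in W$ one computes, exactly as in the proof of Lemma~\ref{L:cc_homo_holo}, that $[\phi^*(\rho_\lambda)](Z_i)=\rho_\lambda(\phi(Z_i))=f_i(\lambda)$, so $\phi^*(\rho_\lambda)$ lies in the fibre over $F(\lambda)$; by the previous paragraph $\phi^*(\rho_\lambda)\in V\subseteq\mb{B}_d$, whence Proposition~\ref{prop:charM_V} forces $\phi^*(\rho_\lambda)=\rho_{F(\lambda)}$ with $F(\lambda)\in V$. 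Thus $F(W)\subseteq V$ and $F|_W=\phi^*|_W$, and for any $f\in\cM_V$ and $\lambda\in W$, $\phi(f)(\lambda)=\rho_\lambda(\phi(f))=[\phi^*(\rho_\lambda)](f)=f(F(\lambda))$, so $\phi(f)=f\circ F$ on $W$ and in particular $f\circ F\in\cM_W$; this is property (3). Applying the same construction to $\phi^{-1}$ gives $G=(g_1,\dots,g_d):\mb{B}_d\to\mb{C}^d$ with coefficients in $\cM_d$, $g_j|_V=\phi^{-1}(Z_j)$, $G(V)\subseteq W$, $G|_V=(\phi^{-1})^*|_V$ and property (4); together these yield (1). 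Finally, since $F|_W=\phi^*|_W$, $G|_V=(\phi^{-1})^*|_V$ and $(\phi^{-1})^*=(\phi^*)^{-1}$, we get $G\circ F|_W=\id_W$ and $F\circ G|_V=\id_V$, which is (2).

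The only genuinely delicate step is the inclusion $\phi^*(W)\subseteq V$. Both hypotheses --- that $\phi$ is an \emph{isomorphism} (injectivity is essential for the contradiction) and that $V,W$ are finite unions of irreducible varieties and a discrete variety --- are used here, and for good reason: by Remarks~\ref{R:fiber}(1), a merely surjective continuous homomorphism can map analytic disks into the corona, so some rigidity is unavoidable. The available rigidity comes precisely from separating the finitely many irreducible components by multipliers, from the propagation along connected components furnished by Lemma~\ref{L:fibers}, and from Shilov's idempotent theorem for the isolated points (Lemma~\ref{L:weak_algiso_biholo_discrete}), which is also why $d$ must be finite. Everything after that is bookkeeping already carried out in Lemma~\ref{L:cc_homo_holo}.
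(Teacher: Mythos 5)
Your proposal is correct and follows essentially the same route as the paper: isolated points of the discrete part are handled via Shilov's idempotent theorem (Lemma~\ref{L:weak_algiso_biholo_discrete}), the irreducible components are forced into $V$ by the modification of Corollary~\ref{C:fibers}(2) using a multiplier vanishing on the other components and on $D$ together with the $(Z_1-1)$ injectivity argument, and $F$, $G$ are then obtained by lifting $\phi(Z_i)$, $\phi^{-1}(Z_j)$ via Proposition~\ref{prop:complete_quotient} as in Lemma~\ref{L:cc_homo_holo}. Your case analysis for points of $D$ lying in some $W_i$ is in fact slightly more careful than the paper's wording, but the substance is identical.
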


\begin{proof}
First we show that $\phi^*$ maps $W$ into $V$.
Write 
\[ W = D \cup W_1\cup \dots \cup W_n \]
where $D$ is discrete and each $W_i$ is an irreducible variety.
The points in $D$ are isolated, and thus are mapped into $V$ by 
Lemma~\ref{L:weak_algiso_biholo_discrete}.
A minor modification of Corollary~\ref{C:fibers}(2) deals with the irreducible subvarieties.
Since $D$ is a variety, there is a multiplier $k\in \cM_d$ which vanishes on $D$
and is non-zero at a regular point $\lambda \in W_1$.
Proceed as in the proof of the lemma, but define $f = h_2\dots h_n k$.
Then the argument is completed in the same manner.
Reversing the roles of $V$ and $W$ shows that $\phi^*$ maps $W$ onto $V$.
Similarly, one obtains that $(\phi^{-1})^*$ maps $V$ onto $W$. The remaining statements therefore follow from Proposition \ref{prop:weak_star_homo}. 
%
\end{proof}

\begin{remark}\label{rem:poly_components}
Note that in the above theorem, 
the map $F$ can be chosen to be a polynomial if and only if 
the algebra homomorphism $\phi$ takes the coordinate functions to 
(restrictions of) polynomials; and hence takes polynomials to polynomials. 
Likewise, $F$ can be chosen to have components which are continuous multipliers 
if and only if $\phi$ takes the coordinate functions to continuous multipliers; 
and hence takes all continuous multipliers to continuous multipliers. 
\end{remark}


\begin{corollary}\label{cor:iso_of_Md}
Every algebraic automorphism of $\cM_d$ for $d$ finite is completely isometric, 
and is unitarily implemented.
\end{corollary}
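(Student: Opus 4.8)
The plan is to apply Theorem~\ref{theorem:algiso_biholo} with $V = W = \mb B_d$, which is certainly the (trivially irreducible) union of finitely many irreducible varieties. Given an algebraic automorphism $\phi$ of $\cM_d$, the theorem produces holomorphic maps $F, G : \mb B_d \to \mb C^d$ with coefficients in $\cM_d$ satisfying $G \circ F = \id_{\mb B_d}$, $F \circ G = \id_{\mb B_d}$, $\phi(f) = f \circ F$ and $\phi^{-1}(g) = g \circ G$. The key point is that $F$ is then a biholomorphic self-map of $\mb B_d$, i.e.\ an automorphism of the ball. The hard part in Theorem~\ref{theorem:algiso_biholo} is already behind us (showing $\phi^*$ maps the variety onto the variety); here we just need to extract the geometric consequence.

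Once $F$ is known to be an automorphism of $\mb B_d$, I would invoke Proposition~\ref{prop:auto_implemt_iso} with $V = W = \mb B_d$: composition with $F$ is a unitarily implemented, completely isometric isomorphism of $\cM_d$ onto itself, implemented by the unitary $U$ sending $k_w$ to $c_w k_{F(w)}$. But $\phi(f) = f \circ F$ is exactly this composition map. Hence $\phi$ itself is completely isometric and unitarily implemented, which is the assertion.

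The only genuine step requiring a word is why $F$, a priori merely a holomorphic self-map of $\mb B_d$ with $G$ as a two-sided holomorphic inverse, is an \emph{automorphism}: a holomorphic bijection of $\mb B_d$ onto itself whose inverse is also holomorphic is by definition a biholomorphic automorphism, and by Cartan's theorem \cite[Theorem~2.1.3]{RudinBall} (or directly) these are exactly the conformal automorphisms of the ball. So the structure of the argument is: (1) specialize Theorem~\ref{theorem:algiso_biholo} to $\cM_d$ to get $F$ biholomorphic; (2) observe $F \in \Aut(\mb B_d)$; (3) apply Proposition~\ref{prop:auto_implemt_iso} to conclude $\phi = (\,\cdot\,) \circ F$ is completely isometric and unitarily implemented. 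I do not anticipate any real obstacle — the work has been done in Theorem~\ref{theorem:algiso_biholo} and Proposition~\ref{prop:auto_implemt_iso}; this corollary is the immediate payoff of combining them in the case $V = W = \mb B_d$.
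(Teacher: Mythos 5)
Your proposal is correct and follows essentially the same route as the paper: the paper's proof likewise cites Theorem~\ref{theorem:algiso_biholo} to see that the automorphism is composition with a biholomorphic self-map of $\mb B_d$ (hence a conformal automorphism), and then applies Proposition~\ref{prop:auto_implemt_iso} to conclude it is completely isometric and unitarily implemented.
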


\begin{proof}
The previous theorem shows that every automorphism is implemented as
composition by a biholomorphic map of the ball onto itself,
i.e.\ a conformal automorphism of $\mb B_d$.
Proposition \ref{prop:auto_implemt_iso} shows that these automorphisms
are completely isometric and unitarily implemented.
\end{proof}

Now we consider the isometric case.

\begin{theorem}\label{T:isometric}
Let $V$ and $W$ be varieties in $\mb{B}_d$, with $d<\infty$.
Every isometric isomorphism of $\cM_V$ onto $\cM_W$ is completely isometric, 
and thus is unitarily implemented.
\end{theorem}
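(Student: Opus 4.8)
The plan is to reduce the isometric case to the completely isometric machinery of Section~\ref{S:cc_iso} by first extracting a pseudohyperbolic-preserving bijection $\phi^*:W\to V$ and then upgrading it to a conformal automorphism of the ball. First I would invoke Corollary~\ref{C:fibers}(3): since $\phi$ is an isometric isomorphism, $\phi^*$ maps $W$ bijectively onto $V$ and preserves the pseudohyperbolic distance. By composing with conformal automorphisms of $\mb B_d$ on both sides (which, by Proposition~\ref{prop:auto_implemt_iso}, are completely isometric and so do not affect what we are trying to prove), I may normalize so that $0\in V$, $0\in W$, and $\phi^*(\rho_0)=\rho_0$.

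Next I would run the argument of Lemma~\ref{L:cc_homo_holo} without change: that lemma only uses that $\phi$ is a \emph{contractive} unital homomorphism to produce a holomorphic $F:\mb B_d\to\mb B_d$ with row-contractive multiplier coordinates, $F(W)\subseteq V$, $F(0)=0$, and $\phi(f)=f\circ F$ for $f\in\cM_V$. Applying the same to $\phi^{-1}$ (also contractive, being isometric) yields $G:\mb B_d\to\mb B_d$ holomorphic with $G(V)\subseteq W$, $G(0)=0$, $\phi^{-1}(g)=g\circ G$. Since $\phi^*$ and $(\phi^{-1})^*$ are mutual inverses, $F\circ G|_V=\id_V$ and $G\circ F|_W=\id_W$. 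At this point the proof of Lemma~\ref{L:comp_iso_iso_biholo} applies verbatim: after shrinking the ambient dimension so that $\mb C^d=\spn V=\spn W$ (legitimate because $\phi^*$ is a distance-preserving bijection fixing $0$, so the spans have equal dimension), the map $H=F\circ G$ is a holomorphic self-map of $\mb B_d$ that is the identity on $V$; by \cite[Theorem~8.2.2]{RudinBall} its fixed-point set is affine and hence all of $\mb B_d$, so $H=\id$, and symmetrically $G\circ F=\id$. Thus $F$ is a biholomorphism of $\mb B_d$ with $F(W)=V$ and, by Cartan's theorem \cite[Theorem~2.1.3]{RudinBall}, a unitary linear map, in particular a conformal automorphism of $\mb B_d$.

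Finally, $\phi(f)=f\circ F$ with $F\in\Aut(\mb B_d)$ carrying $W$ onto $V$, so Proposition~\ref{prop:auto_implemt_iso} shows $\phi$ is unitarily implemented and completely isometric. Undoing the two normalizing automorphisms at the start (each completely isometric) shows the original $\phi$ is completely isometric and unitarily implemented as well.

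The step I expect to be the main obstacle is justifying that the normalizing reductions at the beginning — composing with ball automorphisms moving $\phi^*(w_0)$ and $w_0$ to $0$, and then restricting to $\spn V=\spn W$ — are legitimate in the isometric (rather than completely isometric) setting. The key point is that Corollary~\ref{C:fibers}(3) already gives the honest set-theoretic fact that $\phi^*:W\to V$ is a pseudohyperbolic isometry onto $V$; once that is in hand, everything downstream only needs $\phi$ to be a contractive homomorphism with a contractive inverse, which is exactly what an isometric isomorphism provides, and no complete contractivity is needed to reach the conclusion that $F$ is a genuine automorphism of the ball. The rigidity results of Rudin and Cartan then do the heavy lifting, exactly as in Lemma~\ref{L:comp_iso_iso_biholo} and Theorem~\ref{theorem:complete_iso_iff_auto}.
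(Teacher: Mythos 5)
There is a genuine gap at the step where you propose to ``run the argument of Lemma~\ref{L:cc_homo_holo} without change,'' claiming it only needs $\phi$ to be a contractive unital homomorphism. That lemma in fact uses \emph{complete} contractivity in an essential way: the very first move is to observe that $\big[Z_1\ \dots\ Z_d\big]$ is a row contraction, i.e.\ a contractive element of $M_{1,d}(\cM_V)$, and to conclude that $\big[\phi(Z_1)\ \dots\ \phi(Z_d)\big]$ is again a row contraction, whence Proposition~\ref{prop:complete_quotient} extends it to a row-contractive $F$ mapping $\mb B_{d'}$ into $\ol{\mb B}_d$. An isometric isomorphism is only guaranteed to be contractive on $1\times 1$ matrices; it gives $\|\phi(Z_i)\|\le 1$ for each $i$ but no control on the norm of the row, and assuming that control is essentially assuming the conclusion of the theorem (that isometric implies completely isometric). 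Without it, the $F$ you obtain from Theorem~\ref{theorem:algiso_biholo} has multiplier coordinates and satisfies $F(W)=V$, but need not map $\mb B_d$ into $\mb B_d$ (even its sup norm could be as large as $\sqrt d$), so $H=F\circ G$ is not a holomorphic self-map of the ball and neither Rudin's fixed-point theorem nor Cartan's theorem applies. Everything downstream of that point in your argument collapses.

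The paper's proof avoids this by working only with scalar contractivity. After the same normalization ($0\in V\cap W$, $F(0)=0$) and the same appeal to Corollary~\ref{C:fibers}(3), it picks a basis $w_1,\dots,w_k$ of $\spn W$, sets $v_p=F(w_p)$, and uses preservation of the pseudohyperbolic distance to $0$ to get $\|v_p\|=\|w_p\|$. It then applies $\phi$ to the \emph{single} multipliers $h_p(z)=\ip{z,v_p/r_p}$, each of norm at most one because $Z$ is a row contraction; isometry of $\phi$ gives $\|h_p\circ F\|\le 1$ in $\cM_W$, and the scalar Pick necessary condition plus a $3\times 3$ Cholesky computation forces $\ip{F(w),v_p}=\ip{w,w_p}$ for all $w\in W$. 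From these inner-product identities one builds a unitary $U$ of $\mb C^d$ agreeing with $F$ on $W$, and only then invokes Proposition~\ref{prop:auto_implemt_iso}. If you want to repair your write-up, this substitution of the Pick-matrix argument for the Lemma~\ref{L:cc_homo_holo}/Rudin/Cartan route is the missing ingredient.
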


\begin{proof}
Let $\phi$ be an isometric isomorphism of $\cM_V$ onto $\cM_W$.
By Corollary~\ref{C:fibers}(3), $\phi^*$ maps $W$ onto $V$ and preserves the pseudohyperbolic distance.
Let $F$ be the function constructed as in Theorem~\ref{theorem:algiso_biholo}.
As in Lemma~\ref{L:cc_homo_holo} and Theorem~\ref{theorem:algiso_biholo},
$F$ is a biholomorphism of $W$ onto $V$ and $\phi(h) = h \circ F$.

After modifying both $V$ and $W$ by a conformal automorphism of the ball,
we may assume that $0$ belongs to both $V$ and $W$, and that $F(0)=0$.
Set $w_0=0$ and choose a basis $w_1,\dots,w_k$ for $\spn W$. 
Let $v_p=F(w_p)$ for $1 \le p \le k$.

Suppose that $\|w_p\|=r_p$. This is the pseudohyperbolic distance to $w_0=0=v_0$,
so $\|v_p\|=r_p$ as well. Write $v_p/r_p = \sum_{j=1}^d c_j e_j$.
Let $h_p(z) = \ip{z,v_p/r_p} = \sum_{j=1}^d \ol{c}_j Z_j(z)$. 
This is a linear function on $V$, and thus lies in $\cM_V$.
Since $Z$ is a row contraction, $f$ has norm at most one.
Therefore $k_p:=\phi(h_p) = h_p \circ F$ has norm at most one in $\cM_W$.

Now let $w_{k+1}=w$ be an arbitrary point in $W$, and set $v_{k+1}=v=F(w)\in V$.
By a standard necessary condition for interpolation \cite[Theorem~5.2]{AM02}, the fact that $\|k_p\| \le1$ 
means that in particular interpolating at the  points $w_0,\dots,w_k,w_{k+1}$, we obtain
\[
 0 \le \begin{bmatrix} \frac{1-h_p(v_i)\ol{h_p(v_j)}}{1-\ip{w_i,w_j}} \end{bmatrix}_{0 \le i,j \le k+1} .
\]
In particular, look at the $3\times 3$ minor using rows $0,p,k+1$ to obtain
\[
 0 \le 
 \begin{bmatrix} \ 
 1&1&1\\[.6ex]\ 
 1&1& \frac{1-\ol{\ip{v,v_p}}}{1-\ip{w_p,w}}\\[1.6ex] \ 
 1&\frac{1-\ip{v,v_p}}{1-\ip{w,w_p}}&\frac{1-|\ip{v,v_p/r_p})|^2}{1-\|w\|^2}
 \end{bmatrix}
\]
By the Cholesky algorithm, we find that $\frac{1-\ip{v,v_p}}{1-\ip{w,w_p}} = 1$.
Therefore
\[ \ip{v,v_p} = \ip{w,w_p} \qfor 1 \le p \le k .\]

In particular, we obtain
\[ \ip{v_i,v_j}=\ip{w_i,w_j} \qfor  1 \le i,j \le k .\]
Therefore there is a unitary operator $U$ acting on $\mb C^d$ 
such that $Uw_i=v_i$ for $1 \le i \le k$.
Now since $w\in W$ lies in $\spn\{w_1,\dots,w_k\}$, it is uniquely determined by
the inner products $\ip{w,w_i}$ for $1 \le i \le k$. 
Since $v$ has the same inner products with $v_1,\dots,v_k$, we find that
$Uw = P_N v$ where $N = \spn\{v_1,\dots,v_k\}$. 
However we also have 
\[ \|v\|=\|w\|=\|Uw\|=\|P_Nv\| ; \]
whence $v=Uw$.

Therefore $F$ agrees with the unitary $U$, and hence $\phi$ is implemented by
an automorphism of the ball.
So by Proposition~\ref{prop:auto_implemt_iso}, $\phi$ is completely isometric and 
is unitarily implemented.
\end{proof}

With these results in hand, we may repeat the arguments in 
\cite[Section 11.3]{DRS} word for word to obtain the following automatic continuity result.
Recall that the weak-operator and the weak-$*$ topologies on $\cM_V$ 
coincide by Lemma~\ref{lem:weak_weakstar}.

\begin{theorem}\label{theorem:weak_continuity}
Let $\phi : \cM_V \rightarrow \cM_W$, for $d<\infty$, be a unital algebra isomorphism given
by composition: $\phi(h) = h \circ F$ where $F$ is a holomorphic map of $W$ onto $V$
whose coefficients are multipliers.
Then $\phi$ is continuous with respect to the weak-operator and the weak-$*$ topologies.
\end{theorem}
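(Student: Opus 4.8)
The plan is to reduce the statement to the analogous result already proved in \cite[Section 11.3]{DRS} by checking that every ingredient used there is available in the present setting. The essential point is that weak-$*$ continuity of a linear map between dual operator algebras is a local, bounded-net condition, so it suffices to show that $\phi(h_\alpha) \to 0$ weak-$*$ whenever $h_\alpha \to 0$ weak-$*$ in $\cM_V$ with $\sup_\alpha \|h_\alpha\| < \infty$; by Lemma~\ref{lem:weak_weakstar} this is the same as convergence in \wot. First I would recall that $\phi$ is automatically norm continuous (Lemma~\ref{lem:homo_is_cont}), so that a bounded weak-$*$ null net $h_\alpha$ in $\cM_V$ is carried to a bounded net $\phi(h_\alpha) = h_\alpha \circ F$ in $\cM_W$. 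Since a bounded net converges \wot\ if and only if it converges pointwise as multiplication operators against the kernel vectors $\nu_w$, $w \in W$, it is enough to prove $\phi(h_\alpha)(w) \to 0$ for each fixed $w \in W$.

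The second step is exactly this pointwise convergence. For fixed $w \in W$, evaluation at $w$ is the character $\rho_w \in M(\cM_W)$, and $\phi(h_\alpha)(w) = \rho_w(\phi(h_\alpha)) = (\phi^*\rho_w)(h_\alpha)$. By hypothesis $F = \phi^*|_W$ maps $W$ onto $V$, so $\phi^*\rho_w = \rho_{F(w)}$ is evaluation at the point $F(w) \in V$, which is a \wot-continuous (equivalently weak-$*$-continuous) character of $\cM_V$ by Proposition~\ref{prop:charM_V}. Hence $(\phi^*\rho_w)(h_\alpha) = h_\alpha(F(w)) \to 0$ because $h_\alpha \to 0$ weak-$*$ in $\cM_V$. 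This gives $\phi(h_\alpha) \to 0$ \wot\ in $\cM_W$, and therefore weak-$*$, on bounded nets, which by the Krein--Smulian theorem applied to the dual Banach spaces $\cM_V$ and $\cM_W$ (both being weak-$*$ closed subspaces, or rather complete quotients, of the dual algebra $\cM_d$) upgrades to full weak-$*$ continuity of $\phi$.

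One technical point worth spelling out, and the place I expect the only real friction, is the Krein--Smulian step: $\cM_V$ and $\cM_W$ are preduals-having Banach algebras (as \wot-closed quotients of $\cM_d = \cL_d/\cC$ by Proposition~\ref{prop:complete_quotient}), so a bounded linear map between them that is weak-$*$--weak-$*$ continuous on bounded sets is weak-$*$--weak-$*$ continuous; this is precisely where boundedness of $\phi$ from Lemma~\ref{lem:homo_is_cont} is used, and where one must be careful that the weak-$*$ topology in question is the one coming from the (essentially unique) predual, which by Lemma~\ref{lem:weak_weakstar} coincides with \wot\ on bounded sets. Everything else is a transcription of \cite[Section 11.3]{DRS}: the identity $\phi(h)(w) = h(F(w))$ is all that was used there, and it holds here by statement~(3) of the hypothesis together with Proposition~\ref{prop:charM_V}. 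Thus the proof is:

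\begin{proof}
By Lemma~\ref{lem:homo_is_cont}, $\phi$ is bounded, say $\|\phi\| = C < \infty$. Both $\cM_V$ and $\cM_W$ are \wot-closed quotients of $\cM_d$ (Proposition~\ref{prop:complete_quotient}), hence dual Banach spaces, and by Lemma~\ref{lem:weak_weakstar} the weak-$*$ and \wot\ topologies agree on each. By the Krein--Smulian theorem, it suffices to show that $\phi$ is weak-$*$ continuous on bounded sets; equivalently, that if $(h_\alpha)$ is a net in the unit ball of $\cM_V$ with $h_\alpha \to 0$ weak-$*$, then $\phi(h_\alpha) \to 0$ \wot\ in $\cM_W$. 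Since $\|\phi(h_\alpha)\| \le C$ for all $\alpha$, it is enough to check pointwise convergence of the multiplication operators against the normalized kernel vectors, i.e.\ that $\phi(h_\alpha)(w) \to 0$ for every $w \in W$.

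Fix $w \in W$ and let $\rho_w \in M(\cM_W)$ be evaluation at $w$. By hypothesis $F|_W = \phi^*|_W$, so $\phi^*(\rho_w) = \rho_{F(w)}$ with $F(w) \in V$. By Proposition~\ref{prop:charM_V}, $\rho_{F(w)}$ is a \wot-continuous, hence weak-$*$ continuous, character of $\cM_V$. Therefore
\[
 \phi(h_\alpha)(w) = \rho_w(\phi(h_\alpha)) = (\phi^*\rho_w)(h_\alpha) = \rho_{F(w)}(h_\alpha) = h_\alpha(F(w)) \longrightarrow 0,
\]
since $h_\alpha \to 0$ weak-$*$ in $\cM_V$. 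Hence $\phi(h_\alpha) \to 0$ \wot\ in $\cM_W$, and the proof is complete.
\end{proof}
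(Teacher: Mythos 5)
Your proof is correct and is essentially the argument the paper invokes: the paper simply states that one repeats the proof of \cite[Section 11.3]{DRS} word for word, and that proof is exactly your combination of automatic norm continuity, the Krein--Smulian reduction to bounded nets, the coincidence of the weak-$*$ and \wot\ topologies (Lemma~\ref{lem:weak_weakstar}), and the computation $\ip{\phi(h_\alpha)k_w, k_{w'}} = h_\alpha(F(w'))\ip{k_w,k_{w'}}$ against the densely spanning kernel vectors. No substantive difference from the paper's intended proof.
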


\section{Examples}

In this section, we examine a possible converse to 
Theorem~\ref{theorem:algiso_biholo} in the context of a number of examples.
What we find is that the desired converse is not always true.
That is, suppose that $V$ and $W$ are varieties in $\mb B_d$
and $F$ and $G$ are holomorphic functions on the ball satisfying the
conclusions of Theorem~\ref{theorem:algiso_biholo}.
We are interested in when this implies that the algebras $\cM_V$ and $\cM_W$
are isomorphic.

\subsection*{Finitely many points in the ball}
Let $V = \{v_1, \ldots, v_n\} \subseteq \mb{B}_d$. 
Then $\cA_V = \cM_V$ and they are both isomorphic to $\ell^\infty_n = C(V)$. 
The characters are evaluations at points of $V$. 
If $W$ is another $n$ point set in $\mb{B}_{d'}$, then 
$\cM_W$ is isomorphic to $\cM_V$. 
Also, there are (polynomial) maps $f: \mb{B}_d \rightarrow \mb{C}^{d'}$ 
and $g: \mb{B}_{d'} \rightarrow \mb{C}^{d}$ which are inverses 
of one another when restricted to $V$ and $W$. 
And if $W$ is an $m$ point set, $m\neq n$, then obviously 
$\cM_V$ is not isomorphic to $\cM_W$, 
and there also exists no biholomorphism. 
In this simple case we see that $\cM_V \cong \cM_W$ if and only if 
there exists a biholomorphism, and this happens if and only if $|W|=|V|$.

Nevertheless, the situation for finite sets is not ideal.
Let $V$ and $W$ be finite subsets of the ball, and let $F : W \rightarrow V$ be a biholomorphism. It is natural to hope that the norm of the induced isomorphism can be bounded in terms of the multiplier norm of $F$. The following example shows that this is not possible.

\begin{example}
Fix $n \in \mb{N}$ and $r \in (0,1)$. 
Put $\xi = \exp(\frac{2\pi i}{n})$ and let 
\[ V = \{0\} \cup \{r \xi^j \}_{j=1}^n , \]
and 
\[ W = \{0\} \cup \{\frac{r}{2} \xi^j \}_{j=1}^n . \]
The map $F(z) = 2z$ is a biholomorphism of $W$ onto $V$ 
that extends to an $H^\infty$ function of multiplier norm $2$. 
We will show that the norm of the induced isomorphism 
$\cM_V \rightarrow \cM_W$, given by $f \mapsto f \circ F$, is at least $2^n$. 

Consider the following function in $\cM_V$:
\[ f(0)=0 \qand f(r \xi^j) = r^n \qfor 1 \le j \le n .\]
We claim that the multiplier norm of $f$ is $1$. 
By Proposition \ref{prop:complete_quotient}, $\|f\|$ is the minimal norm
of an $H^\infty$ function that interpolates $f$. 
The function $g(z) = z^n$ certainly interpolates and has norm 1.
We will show that it is of minimal norm. 

The Pick matrix associated to the problem of interpolating $f$ on $V$
by an $H^\infty$ function of norm 1 is
\[
\begin{bmatrix}
1 & 1 & 1 & \cdots & 1 \\
1 & \frac{1 - r^{2n}}{1 - r^2 \xi \ol{\xi}} & \frac{1 - r^{2n}}{1 - r^2 \xi \ol{\xi^2}} & \cdots & \frac{1 - r^{2n}}{1 - r^2 \xi \ol{\xi^n}} \\
1 & \frac{1 - r^{2n}}{1 - r^2 \xi^2 \ol{\xi}} & \frac{1 - r^{2n}}{1 - r^2 \xi^2 \ol{\xi^2}} & \cdots & \frac{1 - r^{2n}}{1 - r^2 \xi^2 \ol{\xi^n}} \\
\vdots & \vdots & \vdots & \ddots & \vdots \\
1 & \frac{1 - r^{2n}}{1 - r^2 \xi^n \ol{\xi}} & \frac{1 - r^{2n}}{1 - r^2 \xi^n \ol{\xi^2}} & \cdots & \frac{1 - r^{2n}}{1 - r^2 \xi^n \ol{\xi^n}} 
\end{bmatrix} .
\]
To show that $g$ is the (unique) function of minimal norm that interpolates $f$, 
it suffices to show that this matrix is singular. 
(We are using well known facts about Pick interpolation. 
See Chapter 6 in \cite{AM02}). 

We will show that the lower right principal sub-matrix
\[
 A = \Bigg[ \frac{1 - r^{2n}}{1 - r^2 \xi^i \ol{\xi^j}} \Bigg]_{i,j=1}^n 
\]
has the vector $(1,\ldots, 1)^t$ as an eigenvector with eigenvalue $n$.
It follows that $(n,-1,-1, \ldots, -1)^t$ is in the kernel of the Pick matrix.
(The matrix $A$ is invertible, so the Pick matrix has rank $n$). 

Indeed, for any $i$,
\begin{align*}
\sum_{j=1}^n \frac{1 - r^{2n}}{1 - r^2 \xi^i \ol{\xi^j}} 
&= (1-r^{2n})\sum_{j=1}^n \sum_{k=0}^\infty (r^2 \xi^i \ol{\xi^j})^k \\
&= (1-r^{2n}) \sum_{k=0}^\infty \sum_{j=1}^n r^{2k} \xi^{ik} \ol{\xi^{jk}} \\
&= (1-r^{2n}) \sum_{m=0}^\infty n r^{2mn} \xi^{imn}  \\
&= n \frac{1-r^{2n}}{1-r^{2n}} = n.
\end{align*}
We used the familiar fact that $\sum_{j=1}^n \xi^{jk}$ is equal 
to $n$ for $k\equiv 0 \pmod{n}$ and equal to $0$ otherwise. 
Therefore $\|f\| = 1$.

Now we will show that $f \circ F \in \cM_W$ has norm $2^n$, where $F(z) = 2z$. 
The function $f \circ F$ is given by 
\[ f \circ F(0) = 0 \qand  f \circ F(\tfrac{r}{2} \xi^j) = r^n \qfor 1 \le j \le n .\] 
The unique $H^\infty$ function of minimal norm that interpolates $f \circ F$ is 
$h(z) = 2^n z^n$.  This follows from precisely the same reasoning as above. 
Therefore the isomorphism has norm at least $2^n$. 
\end{example}

\subsection*{Blaschke sequences}
We will now provide an example of two discrete varieties which are
biholomorphic but yield non-isomorphic algebras.

\begin{example}\label{ex:interp_non_interp_hol}
Let 
\[ v_n = 1-1/n^2 \qand  w_n = 1-e^{-n^2} \qfor n \ge 1 .\]
Set $V = \{v_n\}_{n=1}^\infty$ and $W = \{w_n\}_{n=1}^\infty$. 
Both $V$ and $W$ satisfy the Blaschke condition so they are 
analytic varieties in $\mb{D}$.  
Let $B(z)$ be the  Blaschke product with simple zeros at points in $W$. 
Define
\[
 h(z) = 1 - e^{\frac{1}{z-1}},
\]
and 
\[
 g(z) = \frac{\log(1-z)+1}{\log(1-z)} \Big(1 - \frac{B(z)}{B(0)} \Big).
\]
Then $g,h \in H^\infty$ and they satisfy 
\[ h \circ g|_W = \id_W \qand g \circ h |_V = \id_V .\]
However, by the corollary in \cite[p.204]{Hoffman},  $W$ is an interpolating 
sequence and $V$ is not.
Thus the algebras $\cM_V$ and $\cM_W$ cannot be similar by a map 
sending normalized kernel functions to normalized kernel functions.
The reason is that the normalized kernel functions corresponding to an interpolating 
sequence form a Riesz system, while those corresponding to a non-interpolating 
sequence do not. 
In fact, $\cM_V$ and $\cM_W$ cannot be isomorphic via {\em any} isomorphism, 
as we see below. 
\end{example}

\begin{theorem}\label{theorem:iso_interpolating}
Let $V = \{v_n\}_{n=1}^\infty \subseteq \mb{B}_d$, with $d<\infty$, be a sequence 
satisfying the Blaschke condition $\sum (1-\|v_n\|) < \infty$. 
Then $\cM_V$ is isomorphic to $\ell^\infty$ if and only if $V$ is interpolating.
\end{theorem}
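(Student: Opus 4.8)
The plan is to establish both directions. For the easy direction, suppose $V$ is an interpolating sequence. Then by the standard theory of interpolating sequences for $\cM_d$ (which holds in this multiplier setting via the complete Nevanlinna--Pick property), the restriction map $\cM_d \to \ell^\infty$, $f \mapsto (f(v_n))_n$, is surjective, and by Proposition~\ref{prop:complete_quotient} it factors through $\cM_V = \cM_d/J_V$. Since the point evaluations $\rho_{v_n}$ are exactly the \wot-continuous characters of $\cM_V$ (Proposition~\ref{prop:charM_V}), and since $V$ interpolating means every bounded sequence is attained, the map $\cM_V \to \ell^\infty$ is a bijective bounded homomorphism, hence an isomorphism by the open mapping theorem (or by Lemma~\ref{lem:homo_is_cont} applied to the inverse, since $\ell^\infty$ is semisimple). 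So $\cM_V \cong \ell^\infty$.

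For the converse, suppose $\phi : \ell^\infty \to \cM_V$ is an algebra isomorphism. The first step is to identify where $\phi$ sends the minimal idempotents $e_n \in \ell^\infty$. Each $e_n$ is a minimal nonzero idempotent, so $p_n := \phi(e_n)$ is a minimal nonzero idempotent in $\cM_V$. One shows $p_n$ must be (the restriction to $V$ of) a multiplier that is $1$ at exactly one point of $V$ and $0$ elsewhere, by examining the character space: $\phi^*$ is a homeomorphism of $M(\cM_V)$ onto $M(\ell^\infty) = \beta\mathbb N$, and since $V$ is discrete (Blaschke, $d<\infty$), each $\rho_{v_n}$ is an isolated point of $M(\cM_V)$; an argument as in Lemma~\ref{L:weak_algiso_biholo_discrete} (Shilov idempotent theorem) forces $\phi^{-1}(\rho_{v_n})$ to be an isolated point of $\beta\mathbb N$, i.e. a point of $\mathbb N$, so after relabelling $\phi^*(\rho_{v_n}) = e_n^*$ (the $n$-th point-mass character) and $\phi(e_n) = \upchi_{\{v_n\}}$. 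Thus $\phi(a) = \sum a_n \upchi_{\{v_n\}}$ for $a \in \ell^\infty$, meaning $\phi(a)$ is precisely the (unique minimal-norm) multiplier interpolating the values $(a_n)$ on $V$.

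The key step is now to extract the interpolating condition from the boundedness of $\phi$. Boundedness of $\phi$ says: there is $C$ so that for every bounded sequence $(a_n)$ with $\sup_n|a_n| \le 1$, there is a multiplier $\widetilde f \in \cM_d$ with $\|\widetilde f\| \le C$ and $\widetilde f(v_n) = a_n$ for all $n$ (using Proposition~\ref{prop:complete_quotient} to lift from $\cM_V$ to $\cM_d$ with control on the norm). By a standard Pick-matrix/normal-families argument, this uniform interpolation property for all $\ell^\infty$ targets is equivalent to $V$ being an interpolating sequence for $\cM_d$: indeed, taking finite truncations and the necessary Pick condition (as in \cite[Theorem~5.2]{AM02}) gives a uniform bound on the relevant Pick matrices, which by the Shapiro--Shields-type characterization (valid for complete NP kernels) is exactly interpolation. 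The main obstacle is this last equivalence: one must be careful that "every bounded sequence is interpolated by some multiplier" (with a uniform bound, which follows automatically from the open mapping theorem applied to $\phi$) really does coincide with the separation/Carleson-measure characterization of interpolating sequences in the Drury--Arveson setting — this is where one invokes the theory of interpolating sequences for complete Pick spaces rather than anything elementary. Once that equivalence is in hand, boundedness of $\phi^{-1}$ is automatic and the proof is complete.
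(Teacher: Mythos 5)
Your argument is correct in substance and reaches the conclusion by a slightly different route than the paper. The paper handles the forward direction exactly as you do (it is the definition of interpolating in force here: the restriction map is an isomorphism of $\cM_V$ onto $\ell^\infty$), and for the converse it compares $\cM_V$ with $\cM_W$ for an auxiliary interpolating sequence $W$: the composite isomorphism $\cM_V\cong\ell^\infty\cong\cM_W$ is, by Lemma~\ref{L:weak_algiso_biholo_discrete}, implemented by composition with a bijection of $W$ onto $V$, so the restriction map on $\cM_V$ is already onto $\ell^\infty$, a contradiction. You instead work directly with $\phi:\ell^\infty\to\cM_V$ and analyse $M(\ell^\infty)=\beta\mb{N}$: the Shilov-idempotent/isolated-point argument (the same mechanism as in Lemma~\ref{L:weak_algiso_biholo_discrete}, and likewise requiring $d<\infty$ so that each $\rho_{v_n}$ is isolated in $M(\cM_V)$ while no corona character can be isolated) shows that $\phi^*$ restricts to a bijection of $\{\rho_{v_n}\}$ onto $\mb{N}$, whence $\phi(a)(v_n)=a_{\sigma(n)}$ for a fixed relabelling $\sigma$ and $\phi$ is the inverse of the restriction map. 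Your version avoids introducing the auxiliary sequence $W$ at the cost of redoing the lemma's argument with target $\beta\mb{N}$; the two are otherwise equivalent.

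The one thing to repair is your final paragraph. Once you know that $\phi(a)$ interpolates the values $(a_{\sigma(n)})$ on $V$ for every $a\in\ell^\infty$, you are done: ``$V$ is interpolating'' in this paper means precisely that the restriction map $\cM_V\to\ell^\infty$ is an isomorphism, i.e.\ that every bounded sequence of targets is attained by a multiplier, with uniform norm control supplied for free by the open mapping theorem. The further passage to a separation/Carleson-measure characterization of interpolating sequences is not needed, and it is not innocuous: for the Drury--Arveson space the equivalence of universal interpolation with that characterization was an open problem at the time this paper was written (it is the later Aleman--Hartz--McCarthy--Richter theorem), so a proof that genuinely relied on it would have had a gap. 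Delete that step, take ``interpolating'' to mean surjectivity of the restriction map, and your proof is complete.
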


\begin{proof}
By definition, $V$ is interpolating if and only if $\cM_V$ is isomorphic 
to $\ell^\infty$ via the restriction map. 
It remains to prove that if $V$ is not an interpolating sequence, 
then $\cM_V$ cannot be isomorphic to $\ell^\infty$ via any other isomorphism. 

Let $V$ be a non-interpolating sequence, and let $W$ be any interpolating sequence. 
If $\cM_V$ is isomorphic to $\ell^\infty$, then it is isomorphic to $\cM_W$. 
But by Lemma \ref{L:weak_algiso_biholo_discrete}, this isomorphism must 
be implemented by composition with a holomorphic map, showing that $\cM_V$ is isomorphic to $\ell^\infty$ via the restriction map. This is a contradiction.
\end{proof}

\begin{remark}
We require the Blaschke condition to insure that $V$ is a variety of the type 
we consider, i.e., a zero set of an ideal of multipliers 
(see \cite[Theorem 1.11]{AriasLat10a}). 
Any discrete variety in $\mb{D}$ satisfies this condition.  
\end{remark}

\subsection*{Curves}

Let $V$ be a variety in $\mb{B}_d$. 
If $\cM_V$ is  isomorphic to $H^\infty(\mb{D})$, then by 
Theorem~\ref{theorem:algiso_biholo} we know that 
$V$ must be biholomorphic to the disc. 
To study the converse implication, we shall start with a disc biholomorphically 
embedded in a ball and try to establish a relationship between the associated 
algebras $\cM_V$ and its reproducing kernel Hilbert space $\cF_V$ 
and $H^\infty(\mb{D})$ and $H^2(\mb{D})$. 

Suppose that $h$ is a holomorphic map from the disc $\mb D$ into $\mb B_d$ 
such that $h(\mb{D}) = V$, and that there exists a holomorphic map 
$g: \mb{B}_d \to \mb{C}$ such that $g|_V = h^{-1}$. 

The following result shows that in many cases, the desired isomorphism exists
\cite{APV03}. See \cite[\secsymb 2.3.6]{ARS} for a strengthening to planar domains, and
a technical correction.

\begin{theorem}[Alpay-Putinar-Vinnikov] \label{T:APV}
Suppose that $h$ is an injective holomorphic function of $\mb D$ onto 
$V \subset \mb B_d$ such that
\begin{enumerate}
\item $h$ extends to a $C^1$ function on $\ol{\mb D}$,
\item $\|h(z)\| = 1$ if and only if $|z|=1$,
\item $\ip{h(z),h'(z)} \ne 0$ when $|z|=1$.
\end{enumerate}
Then $\cM_V$ is isomorphic to $H^\infty$.
\end{theorem}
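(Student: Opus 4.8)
The plan is to prove that the composition map
\[
\Gamma : \cM_V \to H^\infty(\mb D), \qquad \Gamma(f) = f \circ h,
\]
is a bounded algebra isomorphism of $\cM_V$ onto $H^\infty(\mb D)$; the open mapping theorem then promotes it to a topological isomorphism, giving the theorem. That $\Gamma$ is a well-defined, unital, injective homomorphism is soft. By (1)--(2) the coordinates $h_1,\dots,h_d$ are continuous on $\ol{\mb D}$ with $\sum_j|h_j|^2\le1$ there, so a pointwise Cauchy--Schwarz estimate on $\mb T$ shows $\big[ M_{h_1}\ \cdots\ M_{h_d}\big]$ is a row contraction on $H^2(\mb D)$; since also $h(\mb D)\subseteq\mb B_d$ by (2), Proposition~\ref{prop:row_holo_cc} (applied with source variety $\mb D=\mb B_1$ and map $h$) shows $\Gamma$ is in fact a completely contractive unital homomorphism of $\cM_V$ into $\Mult(\cF_{\mb D})=H^\infty(\mb D)$, and it is injective because $h(\mb D)=V$. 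So all the content is that $\Gamma$ is surjective with bounded inverse.

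\textbf{Reduction to a kernel comparison.} Pull the reproducing kernel of $\cF_V$ back along $h$: on $\mb D$ put
\[
K(\zeta,\eta)=\frac{1}{1-\ip{h(\zeta),h(\eta)}},
\]
a well-defined positive kernel since $|\ip{h(\zeta),h(\eta)}|\le\|h(\zeta)\|\,\|h(\eta)\|<1$ on $\mb D$ by (2). The unitary $Uf=f\circ h$ of $\cF_V$ onto $\cH(K)$ conjugates $M_f$ to $M_{f\circ h}$, so $\Mult(\cH(K))=\Gamma(\cM_V)$, isometrically for the multiplier norms. Let $s(\zeta,\eta)=(1-\zeta\bar\eta)^{-1}$, so that $\cH(s)=H^2(\mb D)$ and $\Mult(\cH(s))=H^\infty(\mb D)$. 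The theorem now reduces to
\[
c\, s \preceq K \preceq C\, s \quad\text{as positive kernels on }\mb D\qquad (\ast)
\]
for some $0<c\le C$. Indeed, by the standard (Aronszajn) domination criterion for reproducing kernels, $(\ast)$ gives $\cH(K)=H^2(\mb D)$ with equivalent Hilbert-space norms; an operator is then a bounded multiplier of one of these spaces exactly when it is of the other, with comparable multiplier norms, so $\Mult(\cH(K))=H^\infty(\mb D)$ as algebras with equivalent norms. Transported back through $U$, this says $\Gamma$ is a bounded homomorphism of $\cM_V$ onto $H^\infty(\mb D)$ that is bounded below, hence an isomorphism.

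\textbf{The comparison $(\ast)$, and the main obstacle.} Establishing $(\ast)$ is the technical heart and is where hypotheses (1)--(3) are really used; it is the content of \cite{APV03}, with a technical correction and extension in \cite{ARS}. Consider the sesqui-holomorphic, zero-free function on $\mb D\times\mb D$,
\[
\Phi(\zeta,\eta)=\frac{K(\zeta,\eta)}{s(\zeta,\eta)}=\frac{1-\zeta\bar\eta}{1-\ip{h(\zeta),h(\eta)}},
\]
so that $K=\Phi\, s$. By (1) both $1-\zeta\bar\eta$ and $1-\ip{h(\zeta),h(\eta)}$ extend to $C^1$ functions on $\ol{\mb D}\times\ol{\mb D}$, and by (2) (together with injectivity of $h$ on $\ol{\mb D}$, a point clarified in \cite{ARS}) each of them vanishes on $\ol{\mb D}\times\ol{\mb D}$ precisely along the circle $\{(\zeta,\zeta):|\zeta|=1\}$. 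A first-order Taylor expansion transverse to that circle shows the denominator vanishes there to first order with leading coefficient a nonzero multiple of $\ip{h(\zeta),h'(\zeta)}$ --- nonvanishing by exactly hypothesis (3) --- matching the first-order vanishing of $1-\zeta\bar\eta$; hence $\Phi$ and $1/\Phi$ extend continuously and without zeros to the compact set $\ol{\mb D}\times\ol{\mb D}$, and so are bounded above and below there. The remaining and genuinely delicate step --- which I expect to be the main obstacle --- is to upgrade these \emph{function} bounds on $\Phi^{\pm1}$ to the \emph{kernel} comparison $(\ast)$; one cannot simply Schur-multiply, since $\Phi$ is not itself a positive kernel. (This is precisely why a genuinely curved $V$ gives $\cM_V$ isomorphic but, by Theorem~\ref{T:isometric}, not isometrically isomorphic to $H^\infty$.) This last step is carried out in \cite{APV03, ARS}; everything else above is routine.
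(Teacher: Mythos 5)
The paper does not actually prove Theorem~\ref{T:APV}: it is imported wholesale from \cite{APV03} (with the correction in \cite{ARS}), so there is no internal proof to measure you against. That said, your reduction is exactly the framework the paper builds around this theorem in the Curves subsection: your unitary $U f = f\circ h$ identifying $\cM_V$ with $\Mult(\cH(K))$ is Lemma~\ref{lem:k_tilde}; the upper half of your comparison $(\ast)$, namely $K \preceq s$, is already Lemma~\ref{L:NP} and Proposition~\ref{P:bounded} (after normalizing $h(0)=0$ via Section~\ref{S:cc_iso}, and with no boundary regularity needed); and Example~\ref{eg:ball} is precisely the two-sided comparison $(\ast)$ carried out by hand for monomial embeddings. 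Your preliminary observations (that $\Gamma$ is a completely contractive injective homomorphism via Proposition~\ref{prop:row_holo_cc}, and that a two-sided kernel domination forces $\cH(K)=H^2$ with equivalent norms and hence equality of multiplier algebras with equivalent norms) are correct.

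The gap is the one you name yourself: the lower bound $c\,s \preceq K$ is not established, and it is the entire content of the theorem --- everything else is soft. Your analysis of $\Phi=(1-\zeta\bar\eta)/(1-\ip{h(\zeta),h(\eta)})$ correctly shows where hypotheses (1)--(3) enter (first-order matching of the zero sets along the boundary diagonal, with (3) guaranteeing a nonzero leading coefficient, and injectivity of $h$ on $\ol{\mb D}$ --- the point corrected in \cite{ARS} --- ruling out off-diagonal zeros), but, as you observe, uniform bounds on $\Phi^{\pm1}$ over $\ol{\mb D}\times\ol{\mb D}$ do not imply kernel domination, since $\Phi$ is not a positive kernel and cannot be Schur-multiplied against $s$. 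Closing this requires the analytic machinery of \cite{APV03, ARS} (in effect, a Carleson-measure/Besov--Sobolev characterization of the $\cH(K)$ norm), which you cite rather than supply. So as a self-contained proof the proposal is incomplete at its crux; as a reduction of the theorem to the cited literature it is accurate, and somewhat more informative than the paper's own bare citation.
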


Condition (3) should be seen as saying that $V$ meets the 
boundary of the ball non-tangentially. We do not know whether
such a condition is necessary.

The authors of \cite{APV03} were concerned with extending multipliers
on $V$ to multipliers on the ball. This extension follows from
Proposition~\ref{prop:complete_quotient}.

By the results of  Section \ref{S:cc_iso}, there is no loss of generality 
in assuming that $h(0) = 0$, and we do so.
Define a kernel $\tilde k$ on $\mb{D}$ by
\[ \tilde k(z,w) = k(h(z),h(w)) = \frac{1}{1 - \lel h(z), h(w)\rir} . \]
Let $\H$ be the RKHS determined by $\tilde k$. 
Write $\tilde k_w$ for the function $\tilde k(\cdot,w)$. 

The following routine lemma shows that we can consider this new kernel
on the disc instead of $\cF_V$.

\begin{lemma}\label{lem:k_tilde}
The map $\tilde k_z \mapsto k_{h(z)}$ extends to a unitary map $U$ of $\H$ onto $\cF_V$. 
Hence, the multiplier algebra $\Mult(\H)$ is unitarily equivalent to $\cM_V$. 
This equivalence is implemented by composition with $h$:
\[ U^* M_f U = M_{f \circ h} \qfor f \in \cM_V .\]
\end{lemma}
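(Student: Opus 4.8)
The plan is to verify directly that the prescribed map on kernel functions is an isometry with dense range, hence extends to a unitary, and then to check that conjugation by this unitary intertwines the two multiplier algebras in the stated way. First I would show that $U\tilde k_z = k_{h(z)}$ preserves inner products: for $z,w\in\mb D$ we have
\[
 \ip{k_{h(z)},k_{h(w)}}_{\cF_V} = k(h(w),h(z)) = \tilde k(w,z) = \ip{\tilde k_z,\tilde k_w}_{\H},
\]
using the very definition of $\tilde k$ and the reproducing property in each space. Since $\{\tilde k_z : z\in\mb D\}$ has dense span in $\H$ by construction and $\{k_{h(z)} : z \in \mb D\} = \{k_\lambda : \lambda\in V\}$ has dense span in $\cF_V$ by the definition of $\cF_V$, the map extends to a surjective isometry $U\colon\H\to\cF_V$, i.e.\ a unitary.

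Next I would identify the action of $U$ on adjoints of multipliers. The distinguishing property of a multiplier $f\in\cM_V$ is that $M_f^* k_\lambda = \ol{f(\lambda)}k_\lambda$ for $\lambda\in V$ (as recalled just before Proposition~\ref{prop:complete_quotient}); likewise $g\in\Mult(\H)$ is characterized by $M_g^*\tilde k_z = \ol{g(z)}\tilde k_z$. So for $f\in\cM_V$ and $z\in\mb D$,
\[
 U^* M_f^* U \tilde k_z = U^* M_f^* k_{h(z)} = \ol{f(h(z))}\, U^* k_{h(z)} = \ol{(f\circ h)(z)}\,\tilde k_z .
\]
Since this holds on the dense set $\{\tilde k_z\}$, we conclude $U^* M_f^* U = M_{f\circ h}^*$, which exhibits $f\circ h$ as a multiplier of $\H$ (by the same characterization, using that $U^*M_f^*U$ is a bounded operator with the required eigenvalue property) and gives $U^* M_f U = M_{f\circ h}$ after taking adjoints. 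Running the argument with $U$ in place of $U^*$ and a multiplier of $\H$ shows the correspondence is onto, so $f\mapsto f\circ h$ is a (completely) isometric isomorphism of $\cM_V$ onto $\Mult(\H)$, unitarily implemented by $U$.

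There is essentially no hard part here: the lemma is genuinely routine, and the only points requiring a word of care are (i) confirming that $\{\tilde k_z\}$ spans a dense subspace of $\H$, which is automatic for the RKHS \emph{determined by} $\tilde k$, and (ii) being careful about which of $U$ or $U^*$ appears where, since $U$ is defined on kernel functions by $U\tilde k_z = k_{h(z)}$ but it is $U^*$ that conveniently transports the eigenvector relations. If one prefers, the identity $U^*M_fU = M_{f\circ h}$ can instead be checked by pairing $U^*M_fU\tilde k_z$ against $\tilde k_w$ and comparing with $\ip{M_{f\circ h}\tilde k_z,\tilde k_w}$, which reduces to the scalar identity $f(h(z))\,\tilde k(w,z) = (f\circ h)(z)\,\tilde k(w,z)$; but the adjoint/eigenvector route above is cleaner and also immediately yields that $f\circ h$ actually \emph{is} a multiplier rather than merely that the formula holds formally.
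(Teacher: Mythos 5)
Your proposal is correct and follows essentially the same route as the paper: verify that the map on kernel functions preserves the inner product (the paper phrases this as preservation of norms of finite linear combinations, which is equivalent), conclude it extends to a unitary by density of the kernel spans, and then establish the intertwining relation by the adjoint/eigenvector computation $U^*M_f^*U\tilde k_z = \ol{(f\circ h)(z)}\tilde k_z$, exactly as in the proof of Proposition~\ref{prop:auto_implemt_iso} to which the paper's proof defers. Your write-up merely supplies the details the paper leaves implicit.
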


\begin{proof}
A simple computation shows that 
\begin{align*}
\big\| \sum_i c_i \tilde k_{z_i} \big\|^2 
&= \sum_{i,j} \frac{c_i \ol{c_j}}{1 - \lel h(z_i), h(z_j) \rir }  
 = \big\| \sum_i c_i k_{h(z_i)} \big\|^2 .
\end{align*}
So we get a unitary $U : \H \rightarrow \cF_V$. 
As in the proof of Proposition \ref{prop:auto_implemt_iso}, 
for all $f \in \cM_V$ we have $U^* M_f U = M_{f \circ h}$.
\end{proof}

Our goal in this section is to study conditions on $h$ 
which yield a natural isomorphism of the RKHSs $\H$ and $H^2(\mb{D})$. 
The first result is that the Szego kernel $k_z$ dominates the kernel $\tilde k_z$.

\begin{lem} \label{L:NP}
Suppose that $h$ is a holomorphic map of $\bD$ into $\bB_d$.
Then for any finite subset $\{ z_1,\dots,z_n \} \subset \bD$, 
\[  
 \bigg[ \frac 1 {1-\ip{h(z_j),  h(z_i)} } \bigg] \le \bigg[ \frac 1 {1-z_j \ol{z_i}} \bigg] .
\]
\end{lem}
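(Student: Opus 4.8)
The plan is to read the claimed matrix inequality as the statement that the kernel
\[ K(z,w) \;:=\; \frac{1}{1-z\ol{w}} \;-\; \frac{1}{1-\ip{h(z),h(w)}} \]
is positive semidefinite on $\bD$, and to prove this by exhibiting a factorization of $K$ as a Schur product of three manifestly positive kernels. I use the standing normalization $h(0)=0$ of this subsection (without it the statement is false, as constant maps show). Applying the scalar Schwarz lemma to $z\mapsto\ip{h(z),u}$ for each unit vector $u$ gives $\|h(z)\|\le|z|$, so $g(z):=h(z)/z$ extends to a bounded holomorphic map $g\colon\bD\to\ol{\bB_d}$ with $\ip{h(z),h(w)}=z\ol{w}\,\ip{g(z),g(w)}$, and the nontangential boundary values satisfy $\|g(e^{i\theta})\|\le1$ a.e.\ on $\bT$.

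First I would show that the kernel
\[ Q(z,w) \;:=\; \frac{1-\ip{g(z),g(w)}}{1-z\ol{w}} \]
is positive semidefinite on $\bD$. A pointwise Cauchy--Schwarz estimate on the unit circle, using $\|g(e^{i\theta})\|\le 1$, shows that the row $\big[\,M_{g_1}\ \cdots\ M_{g_d}\,\big]$ is a contraction from $H^2(\bD)\otimes\bC^d$ to $H^2(\bD)$. The positivity of $Q$ is then the standard necessary condition for this to be a contractive multiplier (the easy half of Pick's theorem, valid in any reproducing kernel Hilbert space; cf.\ \cite[Theorem~5.2]{AM02}).

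The heart of the argument is an algebraic identity. Writing $s(z,w)=\tfrac1{1-z\ol{w}}$ and $\tilde k(z,w)=\tfrac1{1-\ip{h(z),h(w)}}$, the relations $s = 1 + z\ol{w}\,s$ and $\tilde k = 1 + \ip{h(z),h(w)}\,\tilde k$, combined with $\ip{h(z),h(w)}=z\ol{w}\,\ip{g(z),g(w)}$, give
\begin{align*}
 K = s - \tilde k
 &= z\ol{w}\,s \;-\; z\ol{w}\,\ip{g(z),g(w)}\,\tilde k \\
 &= z\ol{w}\big( (1-\ip{g(z),g(w)})\,s \;+\; \ip{g(z),g(w)}\,(s-\tilde k)\big) \\
 &= z\ol{w}\,Q \;+\; \ip{h(z),h(w)}\,K .
\end{align*}
Solving for $K$ yields $K\big(1-\ip{h(z),h(w)}\big) = z\ol{w}\,Q$, that is
\[ K(z,w) \;=\; z\ol{w}\;Q(z,w)\;\tilde k(z,w) . \]
Now for any finite set $\{z_1,\dots,z_n\}\subset\bD$ the three matrices $\big[z_j\ol{z_i}\big]$, $\big[Q(z_j,z_i)\big]$ and $\big[\tilde k(z_j,z_i)\big]$ are positive semidefinite --- the first has rank one, the second by the previous paragraph, and the third is the Gram matrix of the kernel functions $k_{h(z_i)}\in H^2_d$ --- so by the Schur product theorem their entrywise product $\big[K(z_j,z_i)\big]$ is positive semidefinite. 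This is precisely the asserted inequality.

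I expect the delicate point to be the identity in the third step. One cannot simply expand $s$ and $\tilde k$ in geometric series in $z\ol{w}$ and $\ip{h(z),h(w)}$ and compare term by term, because $z\ol{w} - \ip{h(z),h(w)}$ need not be a positive kernel (already $h(z)=z^2$ fails this), so no naive Schur splitting of $K$ is available. It is the self-referential identity $K = z\ol{w}\,Q + \ip{h(z),h(w)}\,K$ --- which reproduces the compensating factor $\tilde k$ upon solving --- that makes the Schur product argument close.
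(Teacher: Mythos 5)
Your proof is correct and follows essentially the same route as the paper's: Schwarz's lemma applied to $g=h(z)/z$ (using the standing normalization $h(0)=0$, which you rightly note is needed), positivity of the Pick kernel $\frac{1-\ip{g(z),g(w)}}{1-z\ol w}$, and Schur products with $[z_j\ol{z_i}]$ and $[\tilde k]$. Your factorization $K=z\ol w\,Q\,\tilde k$ is exactly the paper's chain of Schur multiplications written as a single identity (which, incidentally, follows by direct algebra without the self-referential recursion).
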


\begin{proof}
Observe that $h(z)/z$ maps $\bD$ into $\ol{\bB_d}$ by Schwarz's Lemma
\cite[Theorem 8.1.2]{RudinBall}. Thus by the matrix version of the
Nevanlinna-Pick Theorem for the unit disk, we obtain that
\begin{align*}
 0 &\le  \left[ \frac {1-\ip{h(z_j)/z_j,  h(z_i)/z_i}}  {1 - z_j \ol{z_i}} \right] 
 = \left[ \frac1{ z_j \ol{z_i}} \right] \circ 
 \left[ \frac {1-\ip{h(z_j),  h(z_i)}} {1 - z_j \ol{z_i}} - 1  \right] .
\end{align*}
Here $\circ$ represents the Schur product. But $\left[ \frac1{ z_j \ol{z_i}} \right]$ and
its Schur inverse $\big[ z_j \ol{z_i} \big]$ are positive.  Therefore the second matrix
on the right is positive. This can be rewritten as 
\[ \Big[ \ \ 1 \ \ \Big] \le \left[ \frac {1-\ip{h(z_j),  h(z_i)}} {1 - z_j \ol{z_i}}  \right]  \]
where $\big[\, 1\, \big]$ represents an $n\times n$ matrix of all 1's.
Now 
\[ 
 \left[ \frac 1{1-\ip{h(z_j),  h(z_i)}} \right] = \left[ \ip{\tilde k_{z_i}, \tilde k_{z_j}} \right] \ge 0 .
\]
So the Schur multiplication by this operator to the previous inequality yields
\[
  \left[ \frac 1{1-\ip{h(z_j),  h(z_i)}} \right] 
  \le \left[ \frac 1 {1 - z_j \ol{z_i}}  \right] . \qedhere
\]
\end{proof}

We obtain the well-known consequence that there is a contractive map  
of $H^2$ into $\cH$. 

\begin{prop} \label{P:bounded}
The linear map $R$, defined by $Rk_z = \tilde k_z$ for $z \in \mb D$, 
from $\spn\{ k_z : z \in \bD \}$ to $\spn\{ \tilde k_z : z \in \bD \}$
extends to a contractive map from $H^2$ into $\H$. 
\end{prop}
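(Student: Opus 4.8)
The plan is to deduce boundedness of $R$ directly from the matrix inequality just proved in Lemma~\ref{L:NP}, using the standard reproducing-kernel fact that a kernel domination $\tilde k \le k$ is equivalent to the contractivity of the map sending $k_z \mapsto \tilde k_z$. Concretely, I would take an arbitrary finite linear combination $f = \sum_i c_i k_{z_i}$ in $\spn\{k_z : z \in \bD\}$ and compute
\[
 \|Rf\|_{\H}^2 = \Big\| \sum_i c_i \tilde k_{z_i} \Big\|_{\H}^2 = \sum_{i,j} c_i \ol{c_j}\, \ip{\tilde k_{z_i}, \tilde k_{z_j}}_{\H} = \sum_{i,j} c_i \ol{c_j}\, \frac{1}{1 - \ip{h(z_j), h(z_i)}},
\]
and similarly $\|f\|_{H^2}^2 = \sum_{i,j} c_i \ol{c_j} (1 - z_j\ol{z_i})^{-1}$. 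The matrix inequality
\[
 \Big[ \tfrac{1}{1-\ip{h(z_j),h(z_i)}} \Big] \le \Big[ \tfrac{1}{1-z_j\ol{z_i}} \Big]
\]
from Lemma~\ref{L:NP}, paired against the vector $(c_i)$, gives exactly $\|Rf\|_{\H}^2 \le \|f\|_{H^2}^2$.

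Having shown $R$ is a contraction on the dense subspace $\spn\{k_z : z \in \bD\} \subseteq H^2$, I would invoke the standard fact that a bounded operator on a dense subspace extends uniquely to a bounded operator on the whole space with the same norm; this gives the contractive extension $R : H^2 \to \H$. One small point worth noting is well-definedness: a priori a vector $f$ in $\spn\{k_z\}$ might have two representations as finite combinations of kernel functions, but the computation above shows $\|Rf\|_{\H}$ depends only on $f$ through the Gram data $\ip{f, k_z}_{H^2} = f(z)$, and more precisely the inequality shows that if $f = 0$ in $H^2$ then $\|Rf\|_{\H} = 0$, so $R$ is well-defined on the linear span; this is exactly the content of the kernel-domination criterion.

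There is really no serious obstacle here: the entire content of the proposition was extracted in Lemma~\ref{L:NP}, and this statement is the routine functional-analytic repackaging. The only thing to be careful about is the direction of the inequality and the matching of indices (the kernel $\tilde k$ is the ``smaller'' one, so the map \emph{out of} $H^2$ \emph{into} $\H$ is the contraction, not the other way around), and the observation that well-definedness of $R$ on the span is automatic from the norm estimate rather than requiring a separate argument. So the proof is essentially two lines: expand both norms against the kernel functions, apply Lemma~\ref{L:NP}, and extend by density.
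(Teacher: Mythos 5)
Your proposal is correct and follows essentially the same route as the paper: expand $\|R\sum_i c_i k_{z_i}\|^2$ and $\|\sum_i c_i k_{z_i}\|^2$ as quadratic forms in the Gram matrices, apply the matrix inequality of Lemma~\ref{L:NP}, and extend by continuity. Your added remark on well-definedness is a point the paper leaves implicit, but as you note it is subsumed by the norm estimate.
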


\begin{proof}
This follows from an application of Lemma~\ref{L:NP}. 
Given $a_i \in \bC$, let $\ba = (a_1,\dots,a_n)^t$.  
Observe that
\begin{align*}
 \| R \sum_{i=1}^n a_i k_{z_i} \|^2 &=
 \| \sum_{i=1}^n a_i \tilde k_{z_i} \|^2 =
 \sum_{i,j=1}^n a_i \ol{a_j} \ip{ \tilde k_{z_i}, \tilde k_{z_j} } \\&
 = \bip{ \Big[ \ip{ \tilde k_{z_i}, \tilde k_{z_j} } \Big] \ba, \ba } 
 \le \bip{ \Big[ \ip{ k_{z_i}, k_{z_j} } \Big] \ba, \ba } \\&
 = \sum_{i,j=1}^n a_i \ol{a_j} \ip{ k_{z_i}, k_{z_j} }
 = \| \sum_{i=1}^n a_i k_{z_i} \|^2
\end{align*}
Hence $R$ is contractive, and extends to $H^2$ by continuity.
\end{proof}

\begin{eg}\label{eg:ball}
Let $h : \mb{D} \to \mb{B}_d$ be given by 
\[
h(z) = (a_1 z, a_2 z^{n_2}, \ldots, a_d z^{n_d}) ,
\]
where $a_1 \ne 0$ and $\sum_{l=1}^d |a_l|^2 = 1$. Let $V = h(\mb{D})$. 
Then $\cM_V$ is similar to $H^\infty(\mb{D})$, and $\cM_V = H^\infty(V)$. 
Moreover, $\A_V$ is similar to $\AD$.
This follows from Theorem~\ref{T:APV}, but we will provide a direct argument.

First observe that for $p\ge N = \max\{n_l: 1 \le l \le d\}$, we have
\begin{align*}
 \frac{\ip{h(z),h(w)} - z^p\bar{w}^p}{1-z\bar{w}} &=
 \sum_{l=1}^d |a_l|^2 \bigg( \frac{z^{n_l}\bar{w}^{n_l} - z^p\bar{w}^p}{1-z\bar{w}} \bigg)\\
 &= \sum_{l=1}^d |a_l|^2 z^{n_l}\bar{w}^{n_l}
  \bigg( \frac{1-z^{p-n_l}\bar{w}^{p-n_l}}{1-z\bar{w}} \bigg)
\end{align*}
Therefore if $z_1,\dots,z_k$ are distinct points in $\mb D$, the $k\times k$ matrix
\[
 A_p :=
 \bigg[ \frac{\ip{h(z_i),h(z_j)} - z_i^p\bar{z}_j^p}{1-z_i\bar{z}_j} \bigg] =
 \sum_{l=1}^d |a_l|^2  \bigg[ z_i^{n_l} \bar{z}_j^{n_l} \bigg] \circ 
 \bigg[ \frac{1-z_i^{p-n_l}\bar{z}_j^{p-n_l}}{1-z\bar{w}} \bigg]
\]
is positive definite because the second matrix on the right is positive by
Pick's condition, and the Schur product of positive matrices is positive.

Since the first coordinate of $h$ is injective, we see that $h$ is injective.
Moreover, 
\[ \|h^{-1}\|_{\cM_V} \le \|a_1^{-1} z_1\|_{\cM} = |a_1|^{-1} =: C. \]
Since the kernel for $\cF_V$ is a complete NP kernel, applying this
to $(h^{-1})^{2^{n-1}}$ yields the positivity of the matrices
\[  \bigg[\frac{C^{2^n}- z_i^{2^{n-1}}\bar{z}_j^{2^{n-1}}}{1- \ip{h(z_i),h(z_j)}} \bigg] .\]
Since $z^{2^{n-1}}$ has norm one, the Pick condition shows that
\[  \bigg[\frac{1- z_i^{2^{n-1}} \bar{z}_j^{2^{n-1}}}{1- z_i \bar{z}_j} \bigg] \ge 0.\]
Thus we obtain positive matrices 
\begin{align*}
 H_n &:=
 \bigg[ \frac{C^{2^n} - z_i^{2^{n-1}}  \bar{z}_j^{2^{n-1}}}{1- \ip{h(z_i),h(z_j)}} \bigg]
 \circ \bigg[ \frac{1 - z_i^{2^{n-1}}  \bar{z}_j^{2^{n-1}}}{1- z_i \bar{z}_j}  \bigg] \\&=
 \bigg[ \frac{C^{2^n} - (C^{2^n} +1) z_i^{2^{n-1}}  \bar{z}_j^{2^{n-1}}
 + z_i^{2^n} \bar{z}_j^{2^n}} {(1 - \ip{h(z_i),h(z_j)})(1 - z_i \bar{z}_j)}  \bigg]
\end{align*}

Choose $M$ so that $2^M \ge N$. 
We form a telescoping sum of positive multiples of the $H_n$'s:
\[
 0 \le \sum_{n=1}^M b_n H_n = 
 \bigg[ \frac{(D-1) -D z_i \bar{z}_j + z_i^{2^M} \bar{z}_j^{2^M}} 
 {(1 - \ip{h(z_i),h(z_j)})(1 - z_i \bar{z}_j)}  \bigg] =: H
\]
where $b_M=1$, $b_n = \prod_{k=n+1}^M (C^{2^k}+1)$ for $1 \le n < M$ and 
$D = \prod_{k=1}^M (C^{2^k}+1)$.
Thus
\begin{align*}
  \bigg[\frac{D}{1\!-\! \ip{h(z_i),h(z_j)}} \bigg] - \bigg[ \frac{1}{1 \!-\! z_i \bar{z}_j} \bigg]
  &= \bigg[ \frac{(D\!-\!1) \!-\! D z_i \bar{z}_j \!+\! \ip{h(z_i),h(z_j)}}
 {(1 \!-\! \ip{h(z_i),h(z_j)})(1 \!-\! z_i \bar{z}_j)}  \bigg] \\[1ex]
 &= H + A_{2^M}\circ\bigg[\frac1{1 \!-\! \ip{h(z_i),h(z_j)}}\bigg] \ge 0.
\end{align*}

This inequality shows that the two kernels $k_z$ and $\tilde k_z$ are comparable.
The argument of Proposition~\ref{P:bounded} shows that $\|R^{-1}\| \le D$.
In particular, $R$ yields an isomorphism of the two RKHSs $H^2$ and $\cH$.
This yields the desired isomorphism of $H^\infty$ and $\cM_V$.

This isomorphism is not isometric.
Indeed, if it were, then we would have $\|h^{-1}\|_{\cM_V} = \|z\|_\infty = 1$.
This would imply that 
\[  0 \le \bigg[\frac{1- z_i\bar{z}_j}{1- \ip{h(z_i),h(z_j)}} \bigg] .\]
Thus arguing as in Lemma~\ref{L:NP}, we obtain 
\[
 \bigg[ \frac 1 {1-z_j \ol{z_i}} \bigg]  \le \bigg[ \frac 1 {1-\ip{h(z_j),  h(z_i)} } \bigg] .
\]
But then the map $R$ would be unitary, and the algebras would be completely isometric.
So by Lemma \ref{L:comp_iso_iso_biholo}, the map $h$ would map onto an
affine disk---which it does not do. \qed
\end{eg}

\subsection*{A class of examples in $\mb{B}_\infty$}

We will now exhibit biholomorphisms of $\mb{D}$ into $\mb{B}_\infty$, some of which
yield an isomorphism and some which do not. 

Let $\{b_n\}_{n=1}^\infty$ be a sequence of complex numbers with $\sum |b_n|^2 = 1$ 
and $b_1 \neq 0$.
Let  $h : \mb{D} \rightarrow \mb{B}_\infty$ be given by 
\[
 h(z) = (b_1 z, b_2 z^2, b_3 z^3, \ldots)  .
\]
Note that $h$ is analytic (because it is given by a power series in the disc), 
with the analytic inverse:
\[
  g(z_1, z_2, z_3,  \ldots ) = z_1/b_1 . 
\] 
The set $V = h(\mb D)$ is the variety in $\mb{B}_\infty$ determined by the equations 
\[ z_k = \frac{b_k}{b_1} z_1^k \qfor k \ge 2. \]

As above let
\[
 \tilde k(z,w) = \frac{1}{1 - \langle h(z), h(w) \rangle} , 
\]
and let $\H$ be the RKHS determined by $\tilde k$. 
By Lemma \ref{lem:k_tilde}, $\H$ is equivalent to $\cF_V$. 
The special form of $h$ allows us to write 
\[
 \frac{1}{1 - \ip{h(z), h(w)}} = 
 \sum_{n=0}^\infty \Big(\sum_{i=1}^\infty |b_i|^2 z^i \ol{w}^i\Big)^n = 
 \sum_{n=0}^\infty a_n (z\ol{w})^n .
\]
By a basic result in RKHSs, $\tilde k(z,w) = \sum e_n(z) \ol{e_n(w)}$ 
where $\{e_n\}$ is an orthonormal basis for $\tilde k$ 
(see Proposition 2.18 of \cite{AM02}).
Hence $\H$ is the space of holomorphic functions on $\mb{D}$ with 
orthonormal basis $\{\sqrt{a_n} z^n\}_{n=0}^\infty$. 

The map $R$ defined in Proposition~\ref{P:bounded} is a contraction.
Observe that $R^*: \cH \rightarrow H^2$ is given 
by composition with the identity mapping because
\[ 
 (R^*f)(z) = \ip{ R^* f, k_z } = \ip{ f, R k_z } = \ip{ f, \tilde k_z } = f(z). 
\]
It is easy to see that the issue is whether $R$ is bounded below.
Since $\|z^n\|_{H^2}=1$ and $\|z^n\|_{\cH} = 1/\sqrt{a_n}$, we get:

\begin{proposition}\label{prop:HequivH2}
$\H$ is equivalent to $H^2$ via $R$ if and only if there are constants 
$0 < c < C$ so that $c \le a_n \le C$  for $n\ge0$. 
\end{proposition}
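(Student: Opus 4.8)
The plan is to show that the assertion follows directly from the explicit description of $\H$ as the space of holomorphic functions on $\mb D$ with orthonormal basis $\{\sqrt{a_n}\,z^n\}_{n=0}^\infty$, together with the fact (established just before the statement) that $R^*$ is the identity inclusion of $\H$ into $H^2$ and hence $R$ has dense range and is injective. Since both $H^2$ and $\H$ are RKHSs on $\mb D$ whose reproducing kernels are power series in $z\ol w$ with strictly positive coefficients, the monomials $\{z^n\}$ form an orthogonal basis in both, with $\|z^n\|_{H^2}^2 = 1$ and $\|z^n\|_{\H}^2 = 1/a_n$. Thus $R$, which sends $k_z \mapsto \tilde k_z$, acts diagonally with respect to these bases: $R z^n = a_n z^n$ (this is the content of $R^* = \mathrm{id}$ dualized). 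The map $R$ is therefore equivalent, up to the unitaries diagonalizing each space, to the diagonal operator $\mathrm{diag}(a_n)$.

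The key steps, in order. First I would record that $R$ is a bounded injective operator with dense range, so that ``$\H$ equivalent to $H^2$ via $R$'' means precisely that $R$ is bounded below, i.e. invertible. Second, I would make the diagonalization explicit: writing an arbitrary finite linear combination $f = \sum_n c_n z^n$, one has $\|f\|_{H^2}^2 = \sum |c_n|^2$ while $\|f\|_{\H}^2 = \sum |c_n|^2/a_n$, and $R f = \sum_n c_n a_n z^n$ by the computation $\ip{Rk_z,\tilde k_w} = \tilde k(w,z) = \sum a_n w^n \ol z^{\,n}$. Hence $\|Rf\|_{\H}^2 = \sum |c_n|^2 a_n$. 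Third, from these formulas: if $c \le a_n \le C$ for all $n$, then $c\,\|f\|_{H^2}^2 \le \|Rf\|_{\H}^2 \le C\,\|f\|_{H^2}^2$ on the dense subspace of polynomials, so $R$ extends to a bounded invertible map, giving the equivalence. Conversely, if $R$ is an equivalence, then applying the two-sided bound to $f = z^n$ gives $c \le a_n \le C$ for the constants realizing the equivalence; in particular $(a_n)$ is bounded above and below away from $0$.

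There is really no serious obstacle here: the statement is essentially a restatement of the fact that $R$ is a diagonal operator with diagonal entries $a_n$, and a diagonal operator is invertible if and only if its entries are bounded above and bounded away from zero. The only point requiring a line of care is justifying the identity $R z^n = a_n z^n$ (equivalently $\ip{Rk_z, \tilde k_w} = \sum a_n w^n\ol z^{\,n}$) and the orthogonality of $\{z^n\}$ in $\H$, both of which are immediate from the series expansion $\tilde k(z,w) = \sum_n a_n (z\ol w)^n$ and the general RKHS fact (Proposition 2.18 of \cite{AM02}, already invoked in the text) that such an expansion exhibits $\{\sqrt{a_n}\,z^n\}$ as an orthonormal basis. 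I would present the proof in two short paragraphs: one establishing the norm formulas and the diagonal action of $R$, and one doing the two implications by comparing norms on the monomials.
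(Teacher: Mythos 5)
Your proof is correct and follows essentially the same route as the paper: the authors derive the proposition directly from the observations, made immediately before its statement, that $R^*$ is the identity inclusion of $\H$ into $H^2$ and that $\|z^n\|_{H^2}=1$ while $\|z^n\|_{\H}=1/\sqrt{a_n}$, so that $R$ is diagonal on the monomials and is an equivalence precisely when the $a_n$ are bounded above and below away from zero. Your writeup simply makes explicit the computation $Rz^n=a_nz^n$ and the resulting norm comparison, which is exactly the argument the paper leaves implicit.
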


The coefficients $a_n$ are determined by the sequence $\{|b_n|\}_{n=1}^\infty$, 
and can be found recursively by the formulae 
\begin{equation}\label{eq:a_n}
 a_0 = 1 \qand a_n = |b_1|^2 a_{n-1} + \ldots + |b_n|^2 a_0 \qfor n\ge1.
\end{equation} 
The logic behind this recursion is that the term $a_n(z\ol{w})^n$ gets 
contributions from the sum 
\[
 \sum_{k=1}^n\Big(\sum_{i=1}^n |b_i|^2 z^i \ol{w}^i\Big)^k = 
 \Big(\sum_{i=1}^n |b_i|^2 z^i \ol{w}^i\Big) \sum_{k=1}^n 
 \Big(\sum_{i=1}^n |b_i|^2 z^i \ol{w}^i\Big)^{k-1} .
\]
Every $|b_i|^2 z^i \ol{w}^i$ from the factor $\sum_{i=1}^n |b_i|^2 z^i \ol{w}^i$ 
needs to be matched with the $(z \ol{w})^{n-i}$ term from the factor 
$\sum_{k=1}^n (\sum_{i=1}^n |b_i|^2 z^i \ol{w}^i)^{k-1}$, 
which has coefficient precisely $a_{n-i}$.
It follows by induction from equation (\ref{eq:a_n}) that $a_n \le 1$.
This provides an alternative proof of Proposition \ref{P:bounded} in this special case. 

We will now construct a sequence $\{b_n\}_{n=1}^\infty$ that makes 
$\liminf a_n > 0$, and another sequence 
that makes $\liminf a_n = 0$. By Proposition \ref{prop:HequivH2}, this will show that there are choices of $\{b_n\}_{n=1}^\infty$ for which $\H$ and $H^2$ are naturally isomorphic, and there are choices for which they are not. 

\begin{eg}
Define $b_n = (1/2)^{n/2}$ for $n\ge1$. 
It follows from the recursion relation (\ref{eq:a_n}) that $a_n = 1/2$ for $n > 1$. 
Thus $R^*$ is bounded below, showing that $\H$ and $H^2$ are naturally isomorphic. 
\end{eg} 

\begin{eg}
We will choose a rapidly increasing sequence $\{n_k\}_{k=1}^\infty$ with $n_1=1$ and define the sequence $\{b_n\}_{n=1}^\infty$ by 
\[
b_{m} = \begin{cases}
(1/2)^{k/2}  & \textrm{ if } m = n_k \\
0 & \textrm{ otherwise }.
\end{cases}
\]
The sequence $\{n_k\}_{k=1}^\infty$ will be defined recursively so that 
$a_{n_{k}-1} \leq 1/k$.

We begin with $n_1 = 1$ and $a_0 = 1$. 
Suppose that we have already chosen $n_1, \ldots, n_k$. 
This means that we have already determined the sequence $b_1, \ldots, b_{n_k}$, 
but the tail $b_{n_k+1}, b_{n_k+2}, \ldots$ is yet to be determined. 
We compute
\[
\sum_{m=1}^{n_k} b_m^2 = \sum_{j=1}^k b_{n_j}^2 = \sum_{j=1}^{k} 1/2^j = r < 1.
\]
Thus, if $b_{n_k+1} = b_{n_k+2} = \ldots = b_{(N+1)n_k} = 0$, 
then it follows from (\ref{eq:a_n}) that $a_{(N+1)n_k} \leq r^{N}$ 
(recall that $a_n \leq1$ for all $n$). 
Therefore we may choose $N$ so large that $a_{(N+1)n_k} \leq (k+1)^{-1}$, 
and we set $n_{k+1} = (N+1)n_k+1$. 

Our construction yields a sequence $\{b_n\}_{n=1}^\infty$ so that $\liminf a_n = 0$. 
Thus the kernel for the analytic disk $V$ so defined is not similar to $H^2$.

We do not know whether $\cM_V$ is isomorphic to $H^\infty$ or not. 
We suspect that it isn't.
\end{eg} 

\begin{remark}
Suppose that there is some $N$ such that $b_n = 0$ for all $n>N$. 
Then the mapping $h : \mb{D} \rightarrow \mb{B}_\infty$ given by 
\[ h(z) = (b_1 z, b_2 z^2, b_3 z^3, \ldots) \]
can be considered as a mapping into $\mb{B}_N$. 
Equation (\ref{eq:a_n}) implies that for $n > N$, $a_n$ will always remain 
between the minimum and the maximum of $a_0, a_1, \ldots, a_N$. 
Therefore, the conditions of Proposition \ref{prop:HequivH2} are fulfilled, 
and $\H$ is equivalent to $H^2$ via $R$.
This is an alternate argument to obtain Example \ref{eg:ball}. 
\end{remark}

\section{The norm closed algebras $\cA_V$}

Let $\cA_V$ be the norm closure of the polynomials in $\cM_V$. 
Define a closed ideal $I_V$ of $\cA_V$ by  
\[ 
 I_V = \{f \in \cA : f(\lambda) = 0 \FORAL \lambda \in V \} .
\] 
It is natural to study the algebra $\cA_V$ and  
the quotient algebra $\cA / I_V$; 
and to ask whether these algebras can be identified. 
We make a few remarks about the subtleties involved.
These subtleties are the reason why in general it seems that 
the algebras $\cM_V$ will be more amenable to general study.

Consider the following \textit{special assumption}, which we will usually 
assume when considering $\cA_V$:
\be\label{eq:special_assumption}
 [I_V H^2_d] = [J_V H^2_d]. 
\ee

By Theorem \ref{thm:DavPitts2}, this is equivalent to assuming 
that the \wot-closure of $I_V$ is $J_V$. 
In function theoretic terms, this means that every $f \in J_V$ is 
the bounded pointwise limit of a net of functions in $I_V$. 
It is not clear when this happens in general. 
A large class of varieties for which this condition holds is the class of 
homogeneous varieties \cite[Section 6]{DRS}. 
Another class is described below.

In dimension $d=1$, the analytic varieties are sequences of points 
which are either finite or satisfy the Blaschke condition. 
For such a sequence $V$, let us denote $S(V) = \ol{V} \cap \mb{T}$. 
When the Lebesgue measure of $S(V)$ is positive, 
there is no nonzero $f \in A(\mb{D})$ that vanishes on $V$ 
because a non-zero function in the disk algebra must be non-zero a.e.\ on
the unit circle. So $I_V = 0$. 
On the other hand, $J_V \neq 0$, because it contains the Blaschke 
product of the sequence $V$.
So it cannot be the \wot-closure of $I_V$. 
In particular, the special assumption (\ref{eq:special_assumption}) is not always satisfied. 
If the Lebesgue measure $|S(V)|$ of $S(V)$ is zero, 
then the special assumption is valid.

\begin{lemma}\label{lem:indisc}
Let $V$ be an analytic variety in $\mb{D}$ such that $S(V)$ has zero measure. 
Then the ideal $J_V$ is the \wot-closure of $I_V$.
\end{lemma}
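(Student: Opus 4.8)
The goal is to show $J_V = \overline{I_V}^{\wot}$ for an analytic variety $V = \{v_n\}$ in $\mb D$ with $|S(V)| = 0$, where $S(V) = \overline V \cap \mb T$. One inclusion is free: $I_V \subseteq J_V$ and $J_V$ is \wot-closed, so $\overline{I_V}^{\wot} \subseteq J_V$. For the reverse inclusion, since \wot-closure equals weak-$*$ closure (Lemma~\ref{lem:weak_weakstar}) and the weak-$*$ closure of a convex set equals its closure in any topology making it dual, it suffices to show: every $f \in J_V$ is the bounded pointwise limit of a net (or sequence) of functions in $I_V = A(\mb D) \cap J_V$. So the real content is an approximation statement inside $H^\infty(\mb D)$.

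\textbf{Key steps.} First I would reduce to the case where $f \in J_V$ is a single function in $H^\infty(\mb D)$ vanishing on $V$; in dimension one $\cM_1 = H^\infty(\mb D)$ and $J_V$ is the ideal of $H^\infty$ functions vanishing on the Blaschke sequence $V$. Write $B$ for the Blaschke product with simple zeros exactly at the points of $V$; then $f = Bg$ for some $g \in H^\infty(\mb D)$. I want to approximate $f$ boundedly and pointwise by disk-algebra functions vanishing on $V$. The natural candidates are $f_r(z) = B(z) g(rz)$ for $r \uparrow 1$, or better, dilations $f_r(z) = f(rz)/\,$(something), but $f(rz)$ need not vanish on $V$. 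The cleaner device: approximate $g$, not $f$. Since $g \in H^\infty$, the dilates $g_r(z) = g(rz)$ lie in $A(\mb D)$, are uniformly bounded by $\|g\|_\infty$, and converge to $g$ pointwise (indeed uniformly on compacta). Then $B g_r \in A(\mb D)$ — here I use that $B$ extends continuously to $\mb T$ off $S(V)$ and, crucially, that $|S(V)| = 0$ combined with a theorem on Blaschke products: a Blaschke product whose zero set clusters only on a \emph{closed set of measure zero} on $\mb T$ is actually continuous on $\overline{\mb D} \setminus S(V)$, and belongs to $A(\mb D)$ precisely when... — actually $B$ itself need not be in $A(\mb D)$. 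So instead multiply by an outer function: since $|S(V)| = 0$, there is an outer function $u \in A(\mb D)$ with $u = 0$ exactly on $S(V)$ and $0 \le u \le 1$ (e.g. built from $\operatorname{dist}(\cdot, S(V))$ via a conformal-type construction, using $\int \log u\,\frac{d\theta}{2\pi} > -\infty$ because $|S(V)|=0$ lets us make $\log u$ integrable). Then $u^\varepsilon B \to B$ pointwise as $\varepsilon \to 0$, uniformly bounded, and $u^\varepsilon B \in A(\mb D)$ for each $\varepsilon > 0$ since $u^\varepsilon$ kills the only obstruction to continuity of $B$ on $\mb T$. Combining, $u^\varepsilon B g_r \in I_V$ and, letting $r \uparrow 1$ then $\varepsilon \downarrow 0$ (as a net over the product directed set), converges boundedly and pointwise to $Bg = f$. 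That net lies in $I_V$ and bp-converges to $f$, so $f \in \overline{I_V}^{\wot}$ by the function-theoretic description of \wot-convergence in $H^\infty$ (bounded nets converging pointwise converge \wot, cf.\ \cite[Theorem~2.1]{DavPitts2} or the special assumption discussion).

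\textbf{Main obstacle.} The crux is the construction of the outer "peak-type" function $u \in A(\mb D)$ vanishing exactly on $S(V)$ with $u^\varepsilon B \in A(\mb D)$. This is where $|S(V)| = 0$ is used essentially: it guarantees both the existence of such an outer function (one needs $\log \operatorname{dist}(e^{i\theta}, S(V)) \in L^1(\mb T)$, which holds when $S(V)$ is closed of measure zero by a standard estimate) and that multiplying $B$ by any positive power of $u$ produces a function extending continuously across $S(V)$ to $0$, hence lying in $A(\mb D)$. I would isolate this as a sublemma: \emph{if $E \subseteq \mb T$ is closed with $|E| = 0$, then there is an outer $u \in A(\mb D)$ with $u^{-1}(0) \cap \overline{\mb D} = E$ and $0 < u \le 1$ on $\overline{\mb D}\setminus E$}; this is classical (Rudin-type peak set constructions, or Fatou's theorem on closed null sets being peak sets for $A(\mb D)$). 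Given the sublemma, verifying $u^\varepsilon B \in A(\mb D)$ is routine: $B$ is bounded and continuous on $\overline{\mb D} \setminus E$, $u^\varepsilon$ is continuous on $\overline{\mb D}$ and vanishes on $E$, so the product extends continuously by $0$ on $E$. Everything else — uniform boundedness, pointwise convergence, passing to the \wot-closure — is bookkeeping. One should double-check the directed-set argument (diagonalizing over $r$ and $\varepsilon$) but no subtlety arises since all approximants are bounded by $\|f\|_\infty$.
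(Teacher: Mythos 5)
Your proof is correct and follows essentially the same route as the paper's: factor $f = Bh$, dilate the $H^\infty$ factor $h$, and tame the Blaschke product near $S(V)$ by multiplying with a small power of a disk-algebra function vanishing exactly on $S(V)$ (the paper writes this factor as $e^{-g/n}$, obtained from Fatou's theorem, and takes a single sequence $f_n = B\, e^{-g/n}\, h((1-\tfrac1n)z)$ rather than a net in two parameters). The one small inaccuracy is your suggested justification of the sublemma via integrability of $\log \operatorname{dist}(e^{i\theta}, S(V))$ --- that integral can diverge for closed null sets that are not Carleson sets --- but the sublemma itself is exactly Fatou's classical theorem, which is what the paper cites, so nothing is lost.
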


\begin{proof}
Let $B$ be the Blaschke product with simple zeros on $V$.
It suffices to construct for every $f \in J_V = BH^\infty$, 
a bounded sequence in $I_V$ converging pointwise to $f$. 
Factor $f = Bh$ with $h\in H^\infty$.
By a theorem of Fatou there is an analytic function $g$ with 
$\re g \ge 0$ such that $e^{-g}$ is in $A(\mb{D})$ and 
vanishes precisely on $S(V)$. 
Define 
\[ f_n(z) = B(z)\, e^{-g(z)/n} h((1-\tfrac1n)z) \qfor n\ge1. \]
This sequence belongs to $\AD$, is bounded by $\|f\|_\infty$, 
and converges to $f$ uniformly on compact subsets of the disk.
Hence it converges to $f$ in the \wot\ topology.
\end{proof}

The importance of the special assumption (\ref{eq:special_assumption}) 
is in the following result.

\begin{proposition}\label{prop:special}
Let $V$ be an ideal such that  $[I_V H^2_d] = [J_V H^2_d]$.
Let $\cA_V$ be the norm closure of the polynomials in $\cM_V$. 
Then 
\begin{enumerate}
\item For every $f \in \cA_d$, the compression of $M_f$ to $\cF_V$ 
is equal to $M_g$, where $g = f|_V$.
\item $\cA_V = \{f|_V : f \in \cA_d \}$.
\item $\cA_d/I_V$ is completely isometrically isomorphic to $\cA_V$ 
via the restriction map $f \mapsto f|_V$ of $\cA_d$ into $\cA_V$.
\item For every $f \in \cA_d$, $\dist(f,I_V) = \dist(f,J_V)$.
\end{enumerate}
\end{proposition}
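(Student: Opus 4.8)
Part (1) is essentially the last assertion of Proposition~\ref{prop:complete_quotient}: for a polynomial $p$ the operator $P_{\cF_V}M_p|_{\cF_V}$ has adjoint fixing each kernel $k_\lambda$, $\lambda\in V$, up to the scalar $\ol{p(\lambda)}$, so it equals $M_{p|_V}$; for general $f\in\cA_d$ one writes $f=\lim p_n$ in multiplier norm and passes to the norm limit, using that restriction $\cM\to\cM_V$ is a complete contraction, whence $p_n|_V\to f|_V=g$ in $\cM_V$ and $P_{\cF_V}M_f|_{\cF_V}=M_g$. In particular the restriction map $\rho\colon\cA_d\to\cM_V$, $\rho(f)=f|_V$, is a unital completely contractive homomorphism with $\ker\rho=\cA_d\cap J_V=I_V$, whose range contains all polynomials and is contained in $\cA_V$.

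\textbf{The crux.}
I would organize (2)--(4) around the single statement $(\star)$: for every $g\in\cA_V$ and $\ep>0$ there is $h\in\cA_d$ with $h|_V=g$ and $\|h\|_{\cM}\le\|g\|_{\cM_V}+\ep$, together with its matrix-valued analogue (with $M_k(\cA_d)$, $M_k(\cA_V)$ in place of $\cA_d$, $\cA_V$), proved the same way. Granting $(\star)$: for $f\in\cA_d$ one always has $\dist_{\cA_d}(f,I_V)\ge\dist_{\cM}(f,J_V)=\|f|_V\|_{\cM_V}$ by $I_V\subseteq J_V$ and Proposition~\ref{prop:complete_quotient}, while $(\star)$ applied to $g=f|_V$ produces $h'\in\cA_d$ with $h'|_V=f|_V$ and $\|h'\|_\cM\le\|f|_V\|_{\cM_V}+\ep$, so that $h:=f-h'\in I_V$ and $\|f-h\|_\cM\le\|f|_V\|_{\cM_V}+\ep$. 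This yields the distance formula $\dist_{\cA_d}(f,I_V)=\dist_{\cM}(f,J_V)$, which is (4), and says the induced injection $\cA_d/I_V\to\cM_V$ is isometric---indeed, with the matrix version, completely isometric. An isometric homomorphism has closed range, and a closed subalgebra of $\cM_V$ that contains every polynomial and is contained in $\cA_V$ must equal $\cA_V$; this is (2), and the map $\cA_d/I_V\to\cA_V$ is then the completely isometric isomorphism of (3). Finally (1) together with (3) gives the stated form $M_g=P_{\cF_V}M_f|_{\cF_V}$.

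\textbf{Proof of $(\star)$ and the main obstacle.}
This is where the hypothesis $[I_VH^2_d]=[J_VH^2_d]$ is used; recall it is equivalent to $\ol{I_V}^{\,\wot}=J_V$. Given $g\in\cA_V$ of norm $s$, choose a polynomial $p$ with $\|p|_V-g\|_{\cM_V}$ small; by the complete Nevanlinna--Pick property (Proposition~\ref{prop:complete_quotient}) lift $p|_V$ to $\tilde p\in\cM$ with $\|\tilde p\|_\cM=\|p|_V\|_{\cM_V}<s+\ep$, and write $\tilde p=p+j$ with $j\in J_V$. What one must extract from the hypothesis is a \emph{norm-bounded} \wot-approximation of $j$ by $I_V$: a net $i_\alpha\in I_V$ with $i_\alpha\to j$ weak-$*$ and $\|i_\alpha\|_\cM\le\|j\|_\cM$. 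Then each $p+i_\alpha$ lies in $\cA_d$, agrees with $p|_V$ on $V$, and has $\|p+i_\alpha\|_\cM\le\|p\|_\cM+\|j\|_\cM$; feeding these approximants through the completely contractive, weak-$*$-continuous Ces\`aro means $\Sigma_N\colon\cM\to\cA_d$ (which converge point-norm on $\cA_d$ and point-weak-$*$ on $\cM$) replaces them by honest polynomials, and an iterated-correction argument, summing a rapidly convergent series in $\cM$-norm, produces the desired $h\in\cA_d$ with $h|_V=g$ and $\|h\|_\cM\le s+O(\ep)$. The main obstacle is exactly the norm-bounded \wot-approximation of $J_V$ by $I_V$: bare \wot-density only gives a norm-uncontrolled net, and $I_V$ is merely $H^2$-norm dense in $J_V$, a much weaker statement than bounded density in the multiplier norm. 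Upgrading \wot-density to a bounded approximation is the point at which the hypothesis is genuinely exploited; the prototype is the explicit construction in the proof of Lemma~\ref{lem:indisc} for $d=1$, where an outer function in $\AD$ vanishing precisely on $S(V)$ does exactly this job. One also has to verify that passing to Ces\`aro means does not destroy the equality $h|_V=g$, which is why the argument is run as a sequence of corrections rather than in a single step.
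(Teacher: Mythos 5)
Your part (1) is fine and matches the paper (it is indeed just the last assertion of Proposition~\ref{prop:complete_quotient}, and needs no special hypothesis). The problem is the crux: everything in (2)--(4) is made to rest on the lifting statement $(\star)$, and $(\star)$ is not actually proved. You yourself flag the key step --- a \emph{norm-bounded} weak-$*$ approximation of an element of $J_V$ by elements of $I_V$ --- as ``the main obstacle,'' and then only gesture at it via the $d=1$ prototype of Lemma~\ref{lem:indisc}. The hypothesis $[I_VH^2_d]=[J_VH^2_d]$ is equivalent (via Theorem~\ref{thm:DavPitts2}) to \wot-density of $I_V$ in $J_V$, but for a non-selfadjoint subspace \wot-density does not upgrade to bounded \wot-density by any general principle (there is no Kaplansky density theorem here), so the step you need is genuinely missing, not merely routine. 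Moreover, even granting a net $i_\alpha\in I_V$ with $i_\alpha\to j$ weak-$*$ and $\|i_\alpha\|\le\|j\|$, your estimate gives $\|p+i_\alpha\|\le\|p\|_{\cM}+\|j\|_{\cM}$, which is controlled by the norm of the \emph{polynomial} $p$ rather than by $\|g\|_{\cM_V}+\ep$; the sketched ``iterated-correction'' scheme could at best recover a bounded isomorphism with a constant, not the completely isometric identification claimed in (3). So as written the argument does not close.

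The paper avoids this entirely by a universality argument: by Arveson's theorem $\cA_d$ is the universal operator algebra of a commuting row contraction, so $\cA_V$ (the compression of $\cA_d$ to $\cF_V$) and $\cA_d/I_V$ are, by (a slight modification of) Popescu's results on noncommutative varieties, each identified as the universal operator algebra generated by a commuting row contraction annihilating $I_V$; hence they are canonically completely isometrically isomorphic, and since compression is restriction, (2) and (3) follow, with (4) coming from the chain $\dist(f,I_V)=\|P_{\cF_V}M_fP_{\cF_V}\|=\dist(f,J_V)$. The special assumption enters precisely in knowing that $\cF_V^\perp=[J_VH^2_d]$ coincides with $[I_VH^2_d]$, so that the compression to $\cF_V$ is the correct universal model for the relations $I_V$. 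If you want to salvage your direct route, you would need to actually establish the bounded lifting $\cA_V\ni g\mapsto h\in\cA_d$ with $\|h\|\le\|g\|+\ep$ (and its matrix version); that statement is essentially equivalent to the proposition itself, so deriving it from the universality machinery is the honest path.
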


\begin{proof}
The first item is just a restatement of Proposition \ref{prop:complete_quotient}. 
By universality of $\cA_d$, $\cA_V$ is equal to the compression of $\cA_d$ to $\cF_V$. 
Therefore, by (a slight modification of) Popescu's results \cite{Popescu06}, 
$\cA_V$ is the universal operator algebra generated by a commuting row 
contraction subject to the relations in $I_V = J_V \cap \cA_d$. 
But so is $\cA_d/ I_V$. So these two algebras can be naturally identified. 
Since compression is restriction, (2) and (3) follow. Item (4) follows from the fact that 
\begin{align*}
 \dist(f,I_V) &= \|f + I_V\|_{\cA/I_V} = 
 \|P_{\cF_V}M_fP_{\cF_V}\| \\&= 
 \|f + J_V\|_{\cM/ J_V} = \dist(f,J_V) . \qedhere
\end{align*}
\end{proof}

\begin{corollary}
Let $V$ be a homogeneous variety, 
or a Blaschke sequence in the disc such that $S(V)$ has measure zero. 
Then $\cA /I_V$ embeds into $\cM / J_V$ isometrically.
\end{corollary}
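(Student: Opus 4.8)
The plan is to deduce the corollary directly from Proposition~\ref{prop:special}. The key observation is that both classes of varieties named in the statement---homogeneous varieties and Blaschke sequences $V \subseteq \mb D$ with $|S(V)| = 0$---satisfy the special assumption~\eqref{eq:special_assumption}. For homogeneous varieties this is the content of \cite[Section 6]{DRS}, which is cited in the excerpt; for Blaschke sequences with $S(V)$ of measure zero this is exactly Lemma~\ref{lem:indisc}. So in either case we have $[I_V H^2_d] = [J_V H^2_d]$, and Proposition~\ref{prop:special} applies.

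Once the special assumption is in force, the embedding is immediate. By Proposition~\ref{prop:special}(4), for every $f \in \cA_d$ we have $\dist(f, I_V) = \dist(f, J_V)$, which says precisely that the natural map $\cA_d / I_V \to \cM_d / J_V$ induced by the inclusion $\cA_d \hookrightarrow \cM_d$ is isometric. (One should first note that this map is well-defined and injective: it is well-defined because $I_V = J_V \cap \cA_d$, so $f \in I_V$ maps to $0$ in $\cM_d/J_V$; and the equality of distances forces injectivity, since $f \in \cA_d$ with $f \in J_V$ gives $\dist(f, I_V) = \dist(f, J_V) = 0$, hence $f \in I_V$.) Therefore $\cA_d / I_V$ embeds isometrically into $\cM_d / J_V$.

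I do not expect any serious obstacle here; the corollary is essentially a packaging of earlier results. The only point requiring a word of care is verifying the hypothesis of Proposition~\ref{prop:special} for the two classes---and both verifications are already available (the homogeneous case from \cite[Section 6]{DRS}, the Blaschke case from Lemma~\ref{lem:indisc}). If one wanted the full completely isometric statement rather than merely isometric, one would invoke Proposition~\ref{prop:special}(3) together with the fact that the identification there is via the restriction map, but the corollary as stated only claims an isometric embedding, so item~(4) suffices.

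\begin{proof}
By \cite[Section 6]{DRS} in the homogeneous case, and by Lemma~\ref{lem:indisc} in the case of a Blaschke sequence $V \subseteq \mb D$ with $|S(V)| = 0$, the variety $V$ satisfies the special assumption $[I_V H^2_d] = [J_V H^2_d]$. Hence Proposition~\ref{prop:special} applies. Since $I_V = J_V \cap \cA_d$, the inclusion $\cA_d \hookrightarrow \cM_d$ induces a well-defined injective homomorphism $\cA_d / I_V \to \cM_d / J_V$. By Proposition~\ref{prop:special}(4), $\dist(f, I_V) = \dist(f, J_V)$ for every $f \in \cA_d$, so this map is isometric. Therefore $\cA_d / I_V$ embeds isometrically into $\cM_d / J_V$.
\end{proof}
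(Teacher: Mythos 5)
Your proof is correct and follows exactly the route the paper intends: the corollary is stated immediately after Proposition~\ref{prop:special} with no separate argument, precisely because it reduces to checking the special assumption (via \cite[Section 6]{DRS} in the homogeneous case and Lemma~\ref{lem:indisc} in the Blaschke case) and then invoking item~(4). Your added remarks on well-definedness and injectivity of the induced map are a harmless elaboration of what the paper leaves implicit.
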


Define
\[
 \ol{V}^{\cA} = \{\lambda \in \ol{\mb{B}_d} : f(\lambda) = 0 \FORAL f \in I_V  \}.
\]
Clearly $\ol{V}^{\cA}$ contains the closure of $V$ in $\mb{B}_d$.
But it is not clear exactly what else it contains. 
However, it seems most reasonable to restrict our attention to 
the algebras $\cA_V$ such that $V = V(I_V)$, so that the variety 
$V$ is determined by functions in $\cA$. 
In this case, we obtain
\be\label{eq:VA}
 \mb{B}_d \cap \ol{V}^{\cA} = V.
\ee
The proof is the same as that of Proposition \ref{prop:zero_set}. 
It is not clear whether this holds for arbitrary varieties. 
This equation does hold when $V \subseteq \mb{D}$ is a 
Blaschke sequence and $|S(V)|=0$.

\begin{proposition}\label{prop:charA_V}
Let $V$ be a variety satisfying condition $(\ref{eq:special_assumption})$. 
Then the character space $M(\cA_V)$ of $\cA_V$ can be identified with $\ol{V}^{\cA}$.
\end{proposition}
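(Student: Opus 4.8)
The plan is to identify $M(\cA_V)$ with $\ol{V}^{\cA}$ by exhibiting a homeomorphism via the canonical projection $\pi(\rho) = (\rho(Z_1),\dots,\rho(Z_d))$ into $\ol{\mb B}_d$, mirroring the strategy used for $\cM_V$ in Section~\ref{sec:characters} but working throughout with $\cA_d$ in place of $\cM_d$. First I would invoke Proposition~\ref{prop:special}: under the special assumption $(\ref{eq:special_assumption})$, the restriction map gives a completely isometric isomorphism $\cA_d / I_V \cong \cA_V$. So characters of $\cA_V$ correspond bijectively to characters of $\cA_d$ that annihilate $I_V$, and the question reduces to identifying $M(\cA_d)$ and tracking which characters kill $I_V$.

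The key external input is the description of $M(\cA_d)$: every character of $\cA_d$ is determined by its values on the generators $Z_1,\dots,Z_d$, and since $[Z_1\ \cdots\ Z_d]$ is a row contraction, $\pi$ maps $M(\cA_d)$ into $\ol{\mb B}_d$; conversely, by the universality of $\cA_d$ (\cite[Theorem~6.2]{Arv98}), every point $\mu \in \ol{\mb B}_d$ gives a commuting row contraction $(\mu_1,\dots,\mu_d)$ (a $1\times d$ scalar contraction) and hence a character $\rho_\mu$ with $\rho_\mu(Z_i)=\mu_i$. Thus $\pi: M(\cA_d) \to \ol{\mb B}_d$ is a continuous bijection between compact Hausdorff spaces, hence a homeomorphism. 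Now $\rho_\mu$ annihilates $I_V$ if and only if $f(\mu)=0$ for every $f \in I_V$ — here one uses that polynomials are dense in $\cA_d$ and that evaluation of $f \in \cA_d$ at $\mu$ really is $\rho_\mu(f)$, which follows since $\rho_\mu$ agrees with point evaluation on polynomials and both sides are norm-continuous. Therefore the characters of $\cA_d$ annihilating $I_V$ are exactly $\{\rho_\mu : \mu \in \ol{V}^{\cA}\}$, and composing with the isomorphism $\cA_V \cong \cA_d / I_V$ identifies $M(\cA_V)$ homeomorphically with $\ol{V}^{\cA}$.

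I would then note the consistency check that this identification extends the one already obtained for $\cM_V$: on $\mb B_d$, by $(\ref{eq:VA})$ we have $\mb B_d \cap \ol{V}^{\cA} = V$, so the finite interior part of $M(\cA_V)$ is again just $V$ with point evaluations, while the remainder is fibered over $\ol{V}^{\cA}\cap \partial\mb B_d$.

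The main obstacle is making precise the claim that every character of $\cA_d$ is a point evaluation $\rho_\mu$ with $\mu \in \ol{\mb B}_d$ — i.e.\ the surjectivity and injectivity of $\pi$ on $M(\cA_d)$. Injectivity is immediate since characters are determined on generators. For surjectivity one needs that each scalar row contraction extends to a character, which is exactly the universal property of $\cA_d$; this requires care when $d=\infty$, where "row contraction" means $\sum|\mu_i|^2 \le 1$ and one must check that the resulting functional is indeed bounded and multiplicative on all of $\cA_\infty$, not merely on polynomials. Once $\pi$ is known to be a continuous bijection of the compact space $M(\cA_d)$ onto $\ol{\mb B}_d$, the topological conclusion is automatic, and the passage to $\cA_V$ via Proposition~\ref{prop:special} is routine.
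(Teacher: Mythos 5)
Your proposal is correct and follows essentially the same route as the paper: identify characters of $\cA_V$ with characters of $\cA_d$ killing $I_V$ via $\cA_V \cong \cA_d/I_V$, use the row contraction $[Z_1\ \cdots\ Z_d]$ to see that $\pi$ lands in $\ol{\mb B}_d$, and check that $\rho$ annihilates $I_V$ exactly when $\pi(\rho) \in \ol{V}^{\cA}$. The paper is terser (it does not isolate the identification $M(\cA_d)\cong\ol{\mb B}_d$ as a separate step and simply asserts the homeomorphism at the end), but the substance is the same.
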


\begin{proof}
Let $\lambda \in \ol{V}^{\cA}$. 
Then the evaluation functional $\rho_\lambda$ given by 
$\rho_\lambda(f) = f(\lambda)$ is a character of $\cA$ with 
kernel equal to $I_{\{\lambda\}} \supseteq I_V$.
Thus $\rho_\lambda$ can be promoted to a character of $\cA_V = \cA/I_V$.

Denote by $Z_1, \ldots, Z_d$ the images of the coordinate functions in $\cA_V$. 
If $\rho$ is a character of $\cA_V$, let 
\[ \lambda = (\lambda_1, \ldots, \lambda_d) = (\rho(Z_1), \ldots, \rho(Z_d)) . \]
Then $\lambda \in \ol{\mb{B}}_d$ because $\rho$ is completely contractive. 
For every $f \in I_V$, $f(Z_1, \ldots, Z_d) = 0$. 
Thus 
\[ \rho(f(Z_1, \ldots, Z_d)) = f(\lambda_1, \ldots, \lambda_d) = 0 .\]
So $\lambda$ lies in the set of all points in $\ol{\mb{B}}_d$ that annihilate $I_V$, 
which is $\ol{V}^{\cA}$.

This identification is easily seen to be a homeomorphism.
\end{proof}

\begin{proposition}\label{prop:alghomo_holo}
Let $V \subseteq \mb{B}_d$ and $W \subseteq \mb{B}_{d'}$ be varieties 
which satisfy condition $(\ref{eq:special_assumption})$. 
Let $\phi : \cA_V \rightarrow \cA_W$ be a unital algebra homomorphism. 
Then there exists a holomorphic map $F: \mb{B}_{d'} \rightarrow \mb{C}^d$ 
that extends continuously to $\ol{\mb{B}}_{d'}$ such that 
\[
 F|_{\ol{W}^{\cA}} = \phi^*.
\]
The components of $F$ are in $\cA_{d'}$, and norm of $F$ as a 
row of multipliers is less than or equal to the cb-norm of $\phi$. 
Moreover, $\phi$ is given by composition with $F$, that is
\[
\phi(f) = f \circ F \qfor f \in \cA_V .
\]
\end{proposition}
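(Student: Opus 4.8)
The plan is to mirror the structure of the proof of Lemma~\ref{L:cc_homo_holo}, replacing point evaluations in the open ball by point evaluations on $\ol{W}^{\cA}$, which by Proposition~\ref{prop:charA_V} is precisely $M(\cA_W)$. First I would consider the images of the coordinate functions $Z_1,\dots,Z_d$ of $\cA_V$ under $\phi$. Since $\phi$ is completely contractive (being a unital homomorphism of operator algebras, so $\|\phi\|_{cb}$ is at least finite, and in fact the cb-norm enters the bound) and since the special assumption $(\ref{eq:special_assumption})$ lets us invoke Proposition~\ref{prop:special}(3) to identify $\cA_W$ with $\cA_{d'}/I_W$, the row $\big[\phi(Z_1)\ \dots\ \phi(Z_d)\big]$ is the restriction to $W$ of a row $F=\big[f_1,\dots,f_d\big]$ of multipliers in $\cA_{d'}$ whose norm as a row of multipliers is at most $\|\phi\|_{cb}\cdot\|\,[Z_1\ \dots\ Z_d]\,\| = \|\phi\|_{cb}$. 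Because $\cA_{d'}\subseteq\cM_{d'}$, each $f_i$ extends to a function continuous on $\ol{\mb B}_{d'}$ and holomorphic on $\mb B_{d'}$, and so $F$ is a holomorphic map $\mb B_{d'}\to\mb C^d$ with continuous extension to $\ol{\mb B}_{d'}$.

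Next I would identify $F$ with $\phi^*$ on $\ol{W}^{\cA}$. Fix $\lambda\in\ol{W}^{\cA}$, so that $\rho_\lambda$ is a character of $\cA_W$ by Proposition~\ref{prop:charA_V}. Then $\phi^*(\rho_\lambda)=\rho_\lambda\circ\phi$ is a character of $\cA_V$, and the computation
\[
 [\phi^*(\rho_\lambda)](Z_i) = \rho_\lambda(\phi(Z_i)) = \phi(Z_i)(\lambda) = f_i(\lambda)
\]
shows that $\pi(\phi^*(\rho_\lambda)) = F(\lambda)$. Under the identification $M(\cA_V)\cong\ol{V}^{\cA}$ of Proposition~\ref{prop:charA_V}, the character $\phi^*(\rho_\lambda)$ is itself an evaluation functional $\rho_\mu$ for some $\mu\in\ol{V}^{\cA}$, and applying that character to the coordinate functions recovers $\mu=\pi(\rho_\mu)=F(\lambda)$; so $F(\lambda)\in\ol{V}^{\cA}$ and $\phi^*(\rho_\lambda)=\rho_{F(\lambda)}$. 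Thus $F|_{\ol{W}^{\cA}}=\phi^*$ under these identifications, and in particular $F(\ol{W}^{\cA})\subseteq\ol{V}^{\cA}$.

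Finally, for the formula $\phi(f)=f\circ F$, I would first check it on polynomials $p$: for $\lambda\in W$,
\[
 \phi(p)(\lambda) = \rho_\lambda(\phi(p)) = [\phi^*(\rho_\lambda)](p) = \rho_{F(\lambda)}(p) = p(F(\lambda)) = (p\circ F)(\lambda),
\]
so $\phi(p)=p\circ F$ as elements of $\cA_W$ (two multipliers agreeing on $W$ are equal). Since polynomials are dense in $\cA_V$ and $\phi$ is norm continuous, while $f\mapsto f\circ F$ is also norm continuous on $\cA_V$ into $\cA_W$—here one uses that $F$ has multiplier coordinates, so composition with $F$ maps $\cA_d$ into $\cA_{d'}$ boundedly, essentially by Proposition~\ref{prop:row_holo_cc} adapted to the norm-closed setting (or directly, since $p\circ F$ is a uniform limit of polynomials composed with polynomial truncations of the $f_i$)—the identity $\phi(f)=f\circ F$ extends from polynomials to all of $\cA_V$. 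I expect the main obstacle to be the bookkeeping around the boundary: verifying that $F$ genuinely extends continuously to $\ol{\mb B}_{d'}$ and that the character-space identifications of Proposition~\ref{prop:charA_V} are compatible with $\phi^*$ on the \emph{closure} $\ol{W}^{\cA}$ rather than just on $W$, since unlike the $\cM_V$ case the relevant characters are not confined to the open ball. The cb-norm bound on $F$ comes for free from the completely contractive extension in Proposition~\ref{prop:special}, but one must be slightly careful that this is exactly the hypothesis $(\ref{eq:special_assumption})$ being used.
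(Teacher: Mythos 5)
Your proposal is correct and follows essentially the same route as the paper: identify $M(\cA_W)$ with $\ol{W}^{\cA}$ via Proposition~\ref{prop:charA_V}, use the special assumption through Proposition~\ref{prop:special} to lift $\phi(Z_1),\dots,\phi(Z_d)$ to elements of $\cA_{d'}$, and read off $\phi(f)=f\circ F$ from the action of $\phi^*$ on evaluation characters. The only difference is your closing density argument, which is superfluous: since every character of $\cA_V$ is a point evaluation on $\ol{V}^{\cA}$, the identity $\phi(f)(\lambda)=\rho_{F(\lambda)}(f)=f(F(\lambda))$ holds directly for every $f\in\cA_V$, not just for polynomials.
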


\begin{proof}
Every character in $M(\cA_W)$ is an evaluation functional at 
some point $\lambda \in \ol{W}^{\cA}$. 
Identifying $\ol{W}^{\cA}$ and $M(\cA_W)$, we find, as in 
Lemma \ref{L:cc_homo_holo}, that the mapping $\phi^*$ is given by 
\[
 \phi^*(\lambda) = (\phi(Z_1) (\lambda), \ldots, \phi(Z_d) (\lambda))
 \qforal  \lambda \in \ol{W}^{\cA} .
\]
Proposition \ref{prop:special} implies that $\phi(Z_1), \ldots, \phi(Z_d)$ 
are restrictions to $W$ of functions $f_1, \ldots, f_d$ in $\cA_{d'}$.
(This is only true under our special assumption (\ref{eq:special_assumption}).
Otherwise we only get $f_1, \ldots, f_d$ in $\cM_{d'}$). Defining 
\[
 F(z) = (f_1(z), \ldots, f_d(z)), 
\]
we obtain the required map $F$.
Finally, for every $\lambda \in \ol{W}^{\cA}$, 
\[
 \phi(f) (\lambda) = \rho_\lambda (\phi(f)) =
 \phi^*( \rho_\lambda) (f) = \rho_{F(\lambda)} (f)  = f(F (\lambda)) .
\]
Therefore $\phi(f) = f \circ F$.
\end{proof}

This immediately yields:

\begin{corollary}\label{cor:algiso_biholo}
Let $V \subseteq \mb{B}_d$ and $W \subseteq \mb{B}_{d'}$ be varieties 
satisfying condition $(\ref{eq:special_assumption})$. 
If $\cA_V$ and $\cA_W$ are isomorphic, then there are two holomorphic 
maps $F: \mb{B}_{d'} \rightarrow \mb{C}^d$ and 
$G: \mb{B}_d \rightarrow \mb{C}^{d'}$ which extend continuously to the closed balls, 
such that $F(\ol{W}^{\cA}) = \ol{V}^{\cA}$, $G(\ol{V}^{\cA}) = \ol{W}^{\cA}$, 
and $F|_{\ol{W}^{\cA}}$ and $G|_{\ol{V}^{\cA}}$ are inverses of each other.
If $V$ and $W$ satisfy the condition $(\ref{eq:VA})$, 
then $F(W) = V$ and $G(V) = W$.
\end{corollary}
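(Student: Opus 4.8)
The plan is to derive everything from Proposition~\ref{prop:alghomo_holo}, applied to $\phi$ and to $\phi^{-1}$ in turn, and then to combine the two. Both $\cA_V$ and $\cA_W$ satisfy the special assumption $(\ref{eq:special_assumption})$, so applying that proposition to the unital homomorphism $\phi:\cA_V\to\cA_W$ produces a holomorphic map $F:\mb{B}_{d'}\to\mb{C}^d$, continuous on $\ol{\mb{B}}_{d'}$ with components in $\cA_{d'}$, such that $F|_{\ol{W}^{\cA}}=\phi^*$ and $\phi(f)=f\circ F$ for all $f\in\cA_V$. Applying the same proposition to $\phi^{-1}:\cA_W\to\cA_V$ yields the symmetric data $G:\mb{B}_d\to\mb{C}^{d'}$ with $G|_{\ol{V}^{\cA}}=(\phi^{-1})^*$ and $\phi^{-1}(g)=g\circ G$. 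This already supplies the maps $F,G$ and the composition formulas.

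Next I would combine the two. Under the identifications $M(\cA_W)=\ol{W}^{\cA}$ and $M(\cA_V)=\ol{V}^{\cA}$ of Proposition~\ref{prop:charA_V}, the maps $\phi^*$ and $(\phi^{-1})^*=(\phi^*)^{-1}$ are mutually inverse homeomorphisms; hence $F|_{\ol{W}^{\cA}}$ and $G|_{\ol{V}^{\cA}}$ are mutually inverse, so $F\circ G|_{\ol{V}^{\cA}}=\id$ and $G\circ F|_{\ol{W}^{\cA}}=\id$, and in particular $F(\ol{W}^{\cA})=\ol{V}^{\cA}$ and $G(\ol{V}^{\cA})=\ol{W}^{\cA}$. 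That is the whole general statement.

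For the final sentence, assume in addition that $V$ and $W$ satisfy $(\ref{eq:VA})$, so $W=\mb{B}_{d'}\cap\ol{W}^{\cA}$ and $V=\mb{B}_d\cap\ol{V}^{\cA}$. Since $F$ is a bijection of $\ol{W}^{\cA}$ onto $\ol{V}^{\cA}$ with inverse $G|_{\ol{V}^{\cA}}$, it suffices to prove the two symmetric inclusions $F(W)\subseteq\mb{B}_d$ and $G(V)\subseteq\mb{B}_{d'}$: then $F(W)\subseteq V$, $G(V)\subseteq W$, and applying $F$ to $G(V)\subseteq W$ gives $V=F(G(V))\subseteq F(W)$, so $F(W)=V$, and likewise $G(V)=W$. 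To prove $F(W)\subseteq\mb{B}_d$, fix $\lambda\in W$ and suppose $F(\lambda)\in\partial\mb{B}_d$. Because $\lambda$ lies in the open ball, if $\lambda$ is isolated in $W$ it is isolated in $\ol{W}^{\cA}$, so $F(\lambda)$ is isolated in $M(\cA_V)=\ol{V}^{\cA}$; then, exactly as in the proof of Lemma~\ref{L:weak_algiso_biholo_discrete}, Shilov's idempotent theorem puts $\upchi_{\{F(\lambda)\}}$ in $\cA_V$, and since $F(\lambda)\notin V$ this multiplier vanishes on $V$, hence is $0$, contradicting that it is the characteristic function of a nonempty clopen subset of $M(\cA_V)$. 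If $\lambda$ is not isolated, then on a positive-dimensional component of $W$ near $\lambda$ the holomorphic map $F$ satisfies $\|F\|^2\le1$ with equality at $\lambda$; the maximum principle for plurisubharmonic functions then forces $F$ to map a neighbourhood of $\lambda$ in $W$ into $\partial\mb{B}_d$, and as $\partial\mb{B}_d$ contains no nonconstant holomorphic image of a positive-dimensional variety, $F$ would be locally constant there, contradicting injectivity of $\phi^*=F|_{\ol{W}^{\cA}}$. The same argument applied to $G$ gives $G(V)\subseteq\mb{B}_{d'}$.

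The step I expect to be the main obstacle is this last one. The isolated-point case and the formal combination of the two applications of Proposition~\ref{prop:alghomo_holo} are routine, but the plurisubharmonic maximum-principle argument in the non-isolated case uses that our varieties behave locally like classical analytic sets near $\lambda$ --- precisely the point flagged as not understood in general in Section~\ref{sec:isomorphisms_biholomorphisms} --- so some care (or an additional hypothesis, such as the finite-union-of-irreducibles-plus-discrete structure of Theorem~\ref{theorem:algiso_biholo}) is needed to make it rigorous.
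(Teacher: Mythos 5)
Your first two paragraphs reproduce the paper's argument exactly: the paper derives this corollary ``immediately'' from Proposition~\ref{prop:alghomo_holo} applied to $\phi$ and to $\phi^{-1}$, using that $(\phi^{-1})^*=(\phi^*)^{-1}$ under the identifications $M(\cA_W)=\ol{W}^{\cA}$ and $M(\cA_V)=\ol{V}^{\cA}$ of Proposition~\ref{prop:charA_V}. There is nothing more to the paper's proof of the main assertion, so on that part you and the paper coincide.

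Where you diverge is the final sentence, and your instincts there are good. The paper gives no justification for $F(W)=V$ beyond the bijectivity of $F|_{\ol{W}^{\cA}}$, and you correctly isolate the missing ingredient: under $(\ref{eq:VA})$ one only needs $F(W)\subseteq \mb{B}_d$ and $G(V)\subseteq\mb{B}_{d'}$, i.e.\ that no point of $W$ is sent into the spherical part of $\ol{V}^{\cA}$, and this is not formal. Your isolated-point argument is sound and is exactly the mechanism of Lemma~\ref{L:weak_algiso_biholo_discrete} (note that $(\ref{eq:VA})$ is what guarantees a point isolated in $W$ is isolated in $\ol{W}^{\cA}$, since $\ol{W}^{\cA}\setminus W$ lies on the sphere). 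Your plurisubharmonic argument in the non-isolated case, however, has the gap you yourself flag: it presupposes that $W$ is locally a classical analytic set near $\lambda$, which the paper's notion of variety (an intersection of zero sets of multipliers) does not supply in general --- this is precisely the obstruction discussed in Remarks~\ref{R:fiber} and the reason Theorem~\ref{theorem:algiso_biholo} carries the finite-union-of-irreducibles-plus-discrete hypothesis. So your proof of the last sentence closes the discrete case but not the general one; since the paper asserts that sentence with no argument at all, your treatment is more careful than the source, but neither fully disposes of the general case.
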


{}From these results and the techniques of 
Lemma \ref{L:comp_iso_iso_biholo}, we also get if $\cA_V$ 
and $\cA_W$ are completely isometrically isomorphic, 
then there exists an automorphism $F \in \Aut(\mb{B}_d)$ 
such that $F(V) = W$. 
On the other hand, the completely isometric isomorphisms of 
Proposition \ref{prop:auto_implemt_iso} are easily seen to 
respect the norm closures of the polynomials in $\cM_V$ and $\cM_W$. 
Together with the above corollary we obtain the following analogue to 
Theorem \ref{theorem:complete_iso_iff_auto}.

\begin{theorem}\label{theorem:complete_iso_iff_auto2}
Let $V$ and $W$ be varieties in $\mb{B}_d$ satisfying $(\ref{eq:special_assumption})$. 
Then $\cA_V$ is completely isometrically isomorphic to $\cA_W$ 
if and only if there exists an automorphism $F$ of $\mb{B}_d$ such that $F(W) = V$. 

Every completely isometric isomorphism $\phi: \cA_V \rightarrow \cA_W$ 
arises as composition $\phi(f) = f \circ F$ where $F$ is such an automorphism. 
In this case $\phi$ is unitarily implemented by a unitary sending the kernel 
function $k_w \in \cF_W$ to a scalar multiple of the kernel function $k_{F(w)} \in \cF_V$.
\end{theorem}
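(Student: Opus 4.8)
The plan is to dispatch the two implications separately, the reverse one being short. Suppose first that $F \in \Aut(\mb{B}_d)$ satisfies $F(W) = V$. By Proposition~\ref{prop:auto_implemt_iso} the map $f \mapsto f \circ F$ is a completely isometric, unitarily implemented isomorphism of $\cM_V$ onto $\cM_W$, the implementing unitary sending $k_w$ to a scalar multiple of $k_{F(w)}$. The components of a ball automorphism are rational functions holomorphic on a neighbourhood of $\ol{\mb{B}_d}$, so they lie in $\cA_d$; hence $p \circ F \in \cA_d$ for every polynomial $p$, and passing to norm closures shows that $f \mapsto f \circ F$ carries $\cA_V$ into $\cA_W$. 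Applying the same to $F^{-1}$ gives surjectivity. (This is the ``easily seen'' assertion preceding the theorem.) So the restriction of the above map to $\cA_V$ is the required completely isometric isomorphism onto $\cA_W$, still unitarily implemented.

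For the converse, let $\phi \colon \cA_V \to \cA_W$ be a completely isometric isomorphism. First I would extract the holomorphic data. By Corollary~\ref{cor:algiso_biholo} together with Proposition~\ref{prop:alghomo_holo} there are holomorphic maps $F, G \colon \mb{B}_d \to \mb{C}^d$, continuous on $\ol{\mb{B}_d}$, with components in $\cA_d$, such that $F|_{\ol{W}^{\cA}} = \phi^*$, $G|_{\ol{V}^{\cA}} = (\phi^{-1})^*$, these restrictions are mutually inverse homeomorphisms, and $\phi(f) = f \circ F$, $\phi^{-1}(g) = g \circ G$. Since $\phi$ (and hence $\phi^{-1}$) is completely isometric, Proposition~\ref{prop:alghomo_holo} shows that $[f_1\ \cdots\ f_d]$ is a row contraction of multipliers; thus $F(\mb{B}_d) \subseteq \ol{\mb{B}_d}$, and as $\phi$ is injective $F$ is non-constant, so the maximum principle forces $F(\mb{B}_d) \subseteq \mb{B}_d$, and likewise for $G$. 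Next I would note that the special assumption~(\ref{eq:special_assumption}) implies~(\ref{eq:VA}): under~(\ref{eq:special_assumption}) every element of $J_V$ is a bounded pointwise limit of elements of $I_V$, so a point of $\mb{B}_d$ annihilating $I_V$ annihilates $J_V$, whence $\ol{V}^{\cA} \cap \mb{B}_d = V(J_V) = V$ by Proposition~\ref{prop:zero_set}, and similarly for $W$. By Corollary~\ref{cor:algiso_biholo} this gives $F(W) = V$, $G(V) = W$, $F \circ G|_V = \id_V$ and $G \circ F|_W = \id_W$.

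From here the argument of Lemma~\ref{L:comp_iso_iso_biholo} applies essentially verbatim. Picking $w_0 \in W$ and setting $v_0 = F(w_0) \in V$, and pre- and post-composing $\phi$ with the completely isometric, $\cA$-preserving isomorphisms of Proposition~\ref{prop:auto_implemt_iso} induced by ball automorphisms that move $w_0$ and $v_0$ to the origin, I may assume $0 \in V \cap W$ and $F(0) = G(0) = 0$. After replacing the ambient $\mb{C}^d$ by $\spn W$ for the source and $\spn V$ for the target (which changes neither algebra), $H := F \circ G$ is a holomorphic self-map of $\mb{B}_{\dim\spn V}$ fixing $V$ pointwise with $H(0) = 0$; by \cite[Theorem~8.2.2]{RudinBall} its fixed-point set is affine and equal to that of $H'(0)$, so since $\spn V$ is the whole space $H = \id$. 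Likewise $G \circ F = \id$, so $F$ is a biholomorphism---in particular $\dim\spn V = \dim\spn W$---and by Cartan's theorem \cite[Theorem~2.1.3]{RudinBall} $F$ is a unitary linear map with $F(W) = V$. Extending it to a unitary of $\mb{C}^d$ produces an automorphism of $\mb{B}_d$ carrying $W$ onto $V$; unwinding the reductions, the original $\phi$ is composition with a ball automorphism $F$ satisfying $F(W) = V$, and Proposition~\ref{prop:auto_implemt_iso} supplies the unitary implementation sending $k_w$ to a scalar multiple of $k_{F(w)}$.

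The main obstacle, exactly as in Theorem~\ref{theorem:complete_iso_iff_auto}, is the bookkeeping needed to reduce to $0 \in V \cap W$ with $F(0) = 0$ and then to cut the ambient dimension down to $\spn W$ and $\spn V$---the delicate point being that these spans are only guaranteed to have the same dimension once both varieties contain the origin. A secondary point that must be handled with care is the precise use of the completely isometric hypothesis, via complete boundedness of $\phi$, to force $F$ to map $\mb{B}_d$ into itself rather than merely into $\mb{C}^d$; once these two points are in place, Rudin's fixed-point theorem and Cartan's uniqueness theorem finish the proof.
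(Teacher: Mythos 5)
Your proof is correct and follows essentially the same route as the paper: the paper's own argument consists of the two sentences preceding the theorem, invoking Proposition~\ref{prop:auto_implemt_iso} (and the fact that composition with a ball automorphism preserves the norm-closed polynomial algebras) for one direction, and Proposition~\ref{prop:alghomo_holo}, Corollary~\ref{cor:algiso_biholo} and the techniques of Lemma~\ref{L:comp_iso_iso_biholo} (Rudin's fixed-point theorem plus Cartan's uniqueness theorem) for the other. Your one genuine addition is the explicit check that (\ref{eq:special_assumption}) implies (\ref{eq:VA}) via \wot-continuity of point evaluations on $\mb{B}_d$ --- a hypothesis that Corollary~\ref{cor:algiso_biholo} requires but that the theorem statement does not list separately --- and this correctly closes a point the paper glosses over.
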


\section{Quotients of $\AD$}

Let $V = \{z_n : n \ge 1 \}$ be a Blaschke sequence in the disk.
Write $B_V$ for the Blaschke product with simple zeros at the points in $V$.
Observe that $J_V = B_V H^\infty$ and $I_V = J_V \cap \AD$.
By Lemma \ref{lem:indisc} and Proposition \ref{prop:special}, 
if the measure $|S(V)|$ of $S(V) = \ol{V}\cap\bT$ is zero, 
then $\A_V = \AD|_{V} \cong \AD/I_V$. 

The interpolating sequences play a special role.

\pagebreak[3]
\begin{theorem} \label{T:AV}
Let $S(V) = \ol{V}\cap\bT$.  
\begin{enumerate}
\item If $|S(V)|>0$ then $I_V=\{0\}$.
\item If $V$ is interpolating and $|S(V)|=0$, then $\A_V$
is isomorphic to $\rC(\ol{V})$ by the restriction map.
\item If $\A_V$ is isomorphic to $\rC(\ol{V})$ via the restriction map, 
then $V$ is an interpolating sequence.
\end{enumerate}
\end{theorem}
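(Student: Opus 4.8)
The plan is to prove the three items in order, with the bulk of the work going into part (3). For part (1), if $|S(V)| > 0$, then any $f \in I_V \subseteq \AD$ is continuous on $\ol{\bD}$, vanishes on $V$, and hence by continuity vanishes on $S(V) = \ol{V} \cap \bT$; but a nonzero function in the disk algebra has boundary values nonzero almost everywhere on $\bT$ (its $\log|f|$ is integrable by the $F.$ and $M.$ Riesz theorem / outer-inner factorization), so $f = 0$. For part (2), when $V$ is interpolating and $|S(V)| = 0$, Lemma~\ref{lem:indisc} and Proposition~\ref{prop:special} give $\A_V \cong \AD/I_V$, and the restriction map $\AD \to \rC(\ol V)$ has kernel exactly $I_V$; surjectivity onto $\rC(\ol V)$ follows because $V$ interpolating means every bounded sequence on $V$ is the restriction of an $H^\infty$ function, and with $|S(V)|=0$ one can (as in the proof of Lemma~\ref{lem:indisc}, multiplying by a suitable outer function $e^{-g/n}$ with $e^{-g}\in\AD$ vanishing on $S(V)$) arrange the interpolant to lie in $\AD$, while a continuous function on the compact set $\ol V$ is a uniform limit of functions extending to $\AD$.

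For part (3), suppose $\A_V$ is isomorphic to $\rC(\ol V)$ via the restriction map. First, $\A_V$ is then a commutative C*-algebra, hence every element has a square root, etc.; more concretely, every bounded function on $V$ that extends continuously to $\ol V$ is the restriction of a function in $\A_V$, i.e. a multiplier in $\cM_V$ that is a uniform limit of polynomials. The key point I would extract is: for every bounded sequence $(c_n)$ on $V$ that extends to a continuous function on $\ol V$ vanishing on $S(V)$, there is $f \in \AD$ with $f|_V = (c_n)$ and $\|f\|_\infty$ controlled by $\sup|c_n|$ times the (fixed) norm of the isomorphism. Now to show $V$ is interpolating it suffices, by the standard characterization, to show that the restriction map $H^\infty \to \ell^\infty(V)$ is surjective; equivalently (Carleson) that $\inf_n \prod_{m\neq n}\big|\frac{z_n - z_m}{1 - \ol{z_m}z_n}\big| > 0$. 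The strategy is to feed into the bounded interpolation afforded by $\A_V \cong \rC(\ol V)$ the "hardest" targets: fix $n$ and take $c_m = \delta_{mn}$. This extends to a continuous function on $\ol V$ precisely when $z_n$ is isolated in $\ol V$ — which, since $|S(V)| = 0$ forces $S(V)$ to be totally disconnected but not necessarily that each $z_n$ is isolated, needs care. I would instead localize: the bounded right inverse $\rC(\ol V) \to \A_V \subseteq H^\infty$ gives a uniform Carleson-type bound, and then invoke the equivalence (see \cite[p.204]{Hoffman} and the discussion around Example~\ref{ex:interp_non_interp_hol}) that $V$ is interpolating iff the normalized reproducing kernels $\{\nu_{z_n}\}$ form a Riesz system in $\cF_V = \cF_S$, iff the map $\A_V \to \ell^\infty$ (restriction) is bounded below in the appropriate sense, which is exactly what a C*-isomorphism onto $\rC(\ol V)$ provides.

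The main obstacle, and where I expect to spend the most effort, is the last one: converting "$\A_V \cong \rC(\ol V)$ isometrically (or just boundedly) via restriction" into the Carleson separation condition. The cleanest route is probably via duality and the Pick property: $V$ is interpolating for $H^\infty$ iff it is interpolating for the multiplier algebra $\cM$ of Drury–Arveson space restricted to the disc (here $d=1$, so $\cM = H^\infty$), iff $\sum \frac{1 - |z_n|}{|1 - \ol{z_m}z_n|}\cdots$ — and the hypothesis that the $C^*$-algebra $\rC(\ol V)$ sits completely boundedly inside (quotients of) $H^\infty$ via point evaluations forces the Pick matrix $\big[\frac{1 - c_i\ol{c_j}}{1 - z_i\ol{z_j}}\big]$ to be positive for all choices of $(c_i)$ with $\sup|c_i| \le 1$, which is precisely the statement that the Gram matrix $\big[\frac{1}{1 - z_i\ol{z_j}}\big]$ of the kernels is uniformly comparable to the diagonal $\big[\frac{\delta_{ij}}{1 - |z_i|^2}\big]$ — i.e. the $\nu_{z_n}$ form a Riesz system — which is the definition of an interpolating sequence. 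I would then just cite the classical equivalences to close the loop, so that the one genuinely new ingredient is the deduction "$\rC(\ol V) \hookrightarrow$ quotient of $H^\infty$ by restriction $\Rightarrow$ uniform Riesz bound on kernels," which is a short functional-analytic argument using that a unital $*$-isomorphism of C*-algebras is completely isometric.
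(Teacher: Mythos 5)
Your outline of part (1) matches the paper's one-line argument, but both (2) and (3) have real gaps as written.

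In part (3) you correctly isolate the right starting point -- the isomorphism plus the open mapping theorem gives a constant $c$ such that every $h\in\rC(\ol{V})$ admits an $\AD$-interpolant of norm at most $c\|h\|$ -- but you then fail to close the argument. Your worry about whether the targets $c_m=\delta_{mn}$ extend continuously to $\ol{V}$ is unfounded: $V$ is a Blaschke sequence, hence has no accumulation points in $\bD$, and $S(V)\subseteq\bT$, so \emph{every} point of $V$ is isolated in $\ol{V}$ and every finitely supported sequence extends continuously (by $0$ on $S(V)$). This is all that is needed: given a bounded sequence $(a_n)$, truncate it, get $f_N\in\AD$ with $f_N(z_n)=a_n$ for $n\le N$ and $\|f_N\|\le c\sup|a_n|$, and take a weak-$*$ cluster point in $H^\infty$; this interpolates $(a_n)$, so $V$ is interpolating. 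The detour you take instead is incorrect: the hypothesis does \emph{not} force the Pick matrix $\bigl[\frac{1-c_i\ol{c_j}}{1-z_i\ol{z_j}}\bigr]$ to be positive for all data with $\sup|c_i|\le1$ -- that would say the interpolation constant is exactly $1$, which essentially never happens -- and the asserted equivalence with a ``uniform Riesz bound'' is not established by the functional-analytic remark about $*$-isomorphisms (the isomorphism in the statement is not assumed isometric).

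In part (2) the surjectivity of the restriction map onto $\rC(\ol{V})$ is the real content and your sketch does not carry it out. You must first match the prescribed continuous values on $S(V)$ itself, which requires Rudin's theorem (a continuous function on a closed null subset of $\bT$ extends to $\AD$); only then can you reduce to $h$ vanishing on $S(V)$. Moreover, ``multiplying the $H^\infty$ interpolant by an outer function $e^{-g/n}$'' does not work: post-multiplication changes the values at the points $z_n$, so it destroys the interpolation. The paper instead builds, for data supported on $\{z_1,\dots,z_N\}$, an interpolant of the form $B_{V_N}\,e^{-g/m}f_0$ where the target values are \emph{pre-divided} by $B_{V_N}(z_n)e^{-g(z_n)/m}$; the interpolating hypothesis is used precisely to bound $|B_{V_N}(z_n)|$ from below, and the resulting norm bound must be independent of $N$ so that a uniform approximation argument yields surjectivity. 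None of these steps is present in your plan, so as written part (2) is not a proof.
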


\begin{proof}
(1) If $|S(V)|>0$, then any $f\in I_V$ must vanish on $S(V)$, and hence is 0.

(2) The map taking $f\in\AD$ to $f|_{\ol{V}}$ is clearly a contractive homomorphism
of $\AD$ into $\rC(\ol{V})$ with kernel $I_V$.  
So it factors through $\A_V$, and induces an injection of $\A_V$ into $\rC(\ol{V})$.  
It suffices to show that this map is surjective, for the result then 
follows from the open mapping theorem. 

Fix $h \in \rC(\ol{V})$.  By Rudin's Theorem (see \cite[p.81]{Hoffman}), there is a function 
$f\in\AD$ such that $f|_{S(V)} = h|_{S(V)}$.  
By replacing $h$ with $h-f$, we may suppose that $h|_{S(V)} = 0$.  
Hence $h|_V$ is a function that $\lim_{n\to\infty} h(z_n) = 0$.
Now it suffices to show that if $h(z_n) = 0$ for all $n>N$, then there is
a function $f\in\AD$ with $f|_V = h|_V$ and $\|f\| \le C \|h\|_\infty$ for
a constant $C$ which is independent of $N$.
Surjectivity will follow from a routine approximation argument.
Let $c$ be the interpolation constant for $V$.

Fix $N$. Take $h\in \rC(\ol{V})$ with $h(z_n) = 0$ for all $n>N$ and $\|h\|_\infty \le 1$.
By a theorem of Fatou \cite[p.81]{Hoffman}, there is an analytic function $g$ on $\bD$
such that $\re g \ge0$ and $e^{-g} \in \AD$ vanishes precisely on $S(V)$.
There is an integer $m>0$ so that $|e^{-g/m}(z_n)| > .5$ for $1 \le n \le N$.
Set $V_N = \{ z_n : n > N \}$.
Since $V$ is interpolating, 
\[ \min \{ |B_{V_N}(z_n)| : 1 \le n \le N \} \ge 1/c .\]
We will look for a function $f$ of the form $f = B_{V_N} e^{-g/m} f_0$.
By the arguments for Rudin's theorem, this will lie in $\AD$.
Clearly it vanishes on $V_N \cup S(V)$, and we require
\[ h(z_n) = f(z_n) = B_{V_N}(z_n) e^{-g/m}(z_n) f_0(z_n) .\]
So we need to find $f_0\in\AD$ with $\|f_0\| \le C$ and 
\[ f_0(z_n) = a_n := h(z_n)e^{g(z_n)/m} / B_{V_N}(z_n) \qfor 1 \le n \le N . \]
The estimates made show that $|a_n| \le 2c$.
The interpolation constant for $\{z_n : 1 \le n \le N \}$ is at most $c$,
and since this is a finite set, we can interpolate using functions in $\AD$
which are arbitrarily close to the optimal norm.  Thus we can find an $f_0$
with $\|f_0\| \le 3c^2$.  Hence we obtain $f$ with the same norm bound.

(3) If $\A_V$ is isomorphic to $\rC(\ol{V})$ via the restriction map, then by the open
mapping theorem, there is a constant $c$ so that for any $h\in\rC(\ol{V})$,
there is an $f\in\AD$ with $f|_V = h|_V$ and $\|f\| \le c \|h\|$.
In particular, for any bounded sequence $(a_n)$ and $N \ge 1$, 
there is an $f_N \in \AD$ such that $\|f_N\|\le c \|(a_n)\|_\infty$ and 
\[ f_N(z_n) = \begin{cases} a_n &\IF 1 \le n \le N\\ 0 &\IF n > N \end{cases} .\]
Take a weak-$*$ cluster point $f$ of this sequence in $\Hinf$.
Then $\|f\| \le c$ and $f$ interpolates the sequence $(a_n)$ on $V$.
So $V$ is interpolating. 
\end{proof}

We can now strengthen Example \ref{ex:interp_non_interp_hol}, 
showing that there are discrete varieties giving rise to 
non-isomorphic algebras which are biholomorphic with a 
biholomorphism that extends continuously to the boundary.

\begin{eg}
We will show that there is a Blaschke sequence $V$ which is not interpolating
and an interpolating sequence $W$ and functions $f$ and $g$ in $\AD$ so that
$f|_V$ is a bijection of $V$ onto $W$ and $g|_W$ is its inverse.
Take
\[ V = \{ z_n := 1-n^{-2} : n \ge 1 \} \AND W = \{ w_n := 1- n^{-2}e^{-n^2} : n \ge 1 \} .\]
Then $W$ is an interpolating sequence, and $V$ is not.  Let 
\[ f(z) = 1 + (z-1) e^{1/(z-1)} .\]
Then since $1/(z-1)$ takes $\bD$ conformally onto $\{z : \re z < -1/2 \}$,
it is easy to see that $e^{1/(z-1)}$ is bounded and continuous on $\ol{\bD}\setminus\{1\}$.
Hence $f$ is continuous, so lies in $\AD$.
Clearly, \mbox{$f(z_n) = 1- n^{-2}e^{-n^2} = w_n$} for $n\ge1$.
The inverse of $f|_V$ is the map $h(w_n) = z_n$.  
Since 
\[ \lim_{n\to\infty} h(w_n) = \lim_{n\to\infty} z_n = 1 ,\]
this extends to be a continuous function on $\ol{W} = W \cup\{1\}$.
By Theorem~\ref{T:AV}, there is a function $g\in\AD$ such that $g|_W = h$.
\end{eg}

\begin{rem}
Let $V = \{v_n\}$ and $W =  \{w_n\}$ be two interpolating sequences in $\bD$
with $\lim v_n = \lim w_n = 1$. 
Then the algebras $\A_V$ and $\A_W$ are both isomorphic to $c$, the space
of convergent sequences. As in our counterexamples using Blaschke products,
we can find biholomorphisms carrying one sequence onto the other. 
However there is no reason for the rates at which they approach the 
boundary to be comparable.
\end{rem}

We now give a strengthening of Theorem \ref{T:AV}.
\begin{theorem}
Let $V = \{v_n\}_{n=1}^\infty$ be a Blaschke sequence in $\mb{D}$, 
such that $|S(V)| = 0$. 
Then $\cA_V$ is isomorphic to $\rC(\ol{V})$ if and only if $V$ is interpolating.
\end{theorem}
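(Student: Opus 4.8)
\textit{Proof plan.} If $V$ is finite then it is trivially interpolating and $\cA_V=\rC(\ol{V})$, so assume $V$ is infinite. The forward implication is exactly Theorem~\ref{T:AV}(2), using $|S(V)|=0$. For the converse the plan is to show that any algebra isomorphism $\phi\colon\cA_V\to\rC(\ol{V})$ forces the Gelfand transform of $\cA_V$ to be an isomorphism onto $\rC(\ol{V})$; since this Gelfand transform is the restriction map, Theorem~\ref{T:AV}(3) will then give that $V$ is interpolating.

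First I would assemble the ingredients. A homomorphism into a semisimple commutative Banach algebra is continuous (as in Lemma~\ref{lem:homo_is_cont}), so $\phi$ is continuous; being bijective, it is a topological isomorphism by the bounded inverse theorem, and in particular its adjoint $\phi^{*}$ is a homeomorphism of $M(\rC(\ol{V}))$ onto $M(\cA_V)$. Since $|S(V)|=0$, Lemma~\ref{lem:indisc} shows that $V$ satisfies the special assumption~(\ref{eq:special_assumption}); hence Proposition~\ref{prop:charA_V} identifies $M(\cA_V)$ with $\ol{V}^{\cA}$, and by Proposition~\ref{prop:special}(2) every $f\in\cA_V$ is the restriction to $V$ of a disc algebra function and so extends continuously to $\ol{V}^{\cA}\subseteq\ol{\mb{D}}$. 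Thus the Gelfand transform is the map $\Gamma\colon\cA_V\to\rC(\ol{V}^{\cA})$ sending $f$ to this extension. Since $M(\rC(\ol{V}))=\ol{V}$, the homeomorphism $\psi:=\phi^{*}$ runs from $\ol{V}$ onto $\ol{V}^{\cA}$, and writing $\phi^{*}(\rho_\lambda)=\rho_{\psi(\lambda)}$ and unwinding gives
\[
 \phi(f)(\lambda)=\Gamma(f)(\psi(\lambda))\qquad\text{for all }f\in\cA_V,\ \lambda\in\ol{V}.
\]
That is, $\phi=C_\psi\circ\Gamma$, where $C_\psi\colon\rC(\ol{V}^{\cA})\to\rC(\ol{V})$, $g\mapsto g\circ\psi$, is an isometric isomorphism. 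As $\phi$ is onto and $C_\psi$ is invertible, $\Gamma$ is onto; it is injective, since an element of $\cA_V\subseteq\cM_V$ vanishing on $V$ is $0$, and it is norm-decreasing, so by the open mapping theorem $\Gamma$ is an isomorphism of $\cA_V$ onto $\rC(\ol{V}^{\cA})$.

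It remains to identify $\ol{V}^{\cA}$ with $\ol{V}$ (the closure of $V$ in $\ol{\mb{D}}$), so that $\Gamma$ is literally the restriction map of Theorem~\ref{T:AV}(3). The inclusion $\ol{V}\subseteq\ol{V}^{\cA}$ is automatic. Conversely, if there were a point $\lambda_0\in\ol{V}^{\cA}\setminus\ol{V}$, then Urysohn's lemma would produce $G\in\rC(\ol{V}^{\cA})$ with $G(\lambda_0)=1$ and $G|_{\ol{V}}=0$; choosing $f\in\cA_V$ with $\Gamma(f)=G$, we would get $f|_V=G|_V=0$, hence $f=0$ and $G=0$, contradicting $G(\lambda_0)=1$. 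Therefore $\Gamma$ is an isomorphism of $\cA_V$ onto $\rC(\ol{V})$ realized by restriction, and Theorem~\ref{T:AV}(3) shows $V$ is an interpolating sequence.

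I do not anticipate a genuine obstacle: all of the hard analysis (Rudin's and Fatou's theorems and the finite interpolation estimates) is already packaged into Theorem~\ref{T:AV} and Lemma~\ref{lem:indisc}. The only step requiring any care is pinning down the maximal ideal space of $\cA_V$, i.e.\ that $\ol{V}^{\cA}=\ol{V}$; the Urysohn argument above extracts this for free from the hypothesis $\cA_V\cong\rC(\ol{V})$, so no separate analysis of the boundary behaviour of $\cA_V$ is needed.
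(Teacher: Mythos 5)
Your argument is correct, but it follows a genuinely different route from the paper's. The paper proves the converse by contradiction using an auxiliary variety: given non-interpolating $V$, it radially perturbs to $w_n=(1-e^{-n})v_n/|v_n|$, obtaining an interpolating sequence $W$ with $S(W)=S(V)$ and $\ol{W}$ homeomorphic to $\ol{V}$, so that $\cA_W\cong\rC(\ol{V})$ via restriction; an isomorphism $\cA_V\cong\rC(\ol{V})$ would then give $\cA_V\cong\cA_W$, and Corollary~\ref{cor:algiso_biholo} forces this to be composition with a holomorphic map, reducing to the restriction-map case of Theorem~\ref{T:AV}(3). You instead argue purely Banach-algebraically: any isomorphism $\phi$ factors as $C_\psi\circ\Gamma$ through the Gelfand transform $\Gamma$ (using Lemma~\ref{lem:indisc}, Proposition~\ref{prop:charA_V} and Proposition~\ref{prop:special} to identify $M(\cA_V)$ with $\ol{V}^{\cA}$ and $\Gamma$ with the restriction map), so surjectivity of $\phi$ forces surjectivity of $\Gamma$, and your Urysohn argument then pins down $\ol{V}^{\cA}=\ol{V}$ so that Theorem~\ref{T:AV}(3) applies. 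Your route avoids both the construction of the perturbed sequence and the holomorphic-composition machinery of Corollary~\ref{cor:algiso_biholo}, and as a by-product establishes $M(\cA_V)=\ol{V}$ under the hypothesis; the paper's route is shorter given the rigidity results it has already developed and displays them in action. Both are valid proofs.
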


\begin{proof}
Theorem \ref{T:AV} says that $\cA_V$ is isomorphic to $\rC(\ol{V})$ 
via the restriction map if and only if $V$ is interpolating. 
All that remains to prove is that if $V$ is not an interpolating 
sequence, then it cannot be isomorphic via any other isomorphism. 

Suppose that $V$ is a non-interpolating sequence and define 
$w_n = (1-e^{-n})v_n/|v_n|$. 
Then $W = \{w_n\}$ is an interpolating sequence with $S(W) = S(V)$, 
and $\ol{V}$ is homeomorphic to $\ol{W}$ via the map that
continuously extends $v_n \mapsto  (1-e^{-n})v_n/|v_n|$. 
Therefore, $\cA_W$ is isomorphic to $\rC(\ol{V})$ via the restriction map. 
Now assume that $\cA_V$ is isomorphic to $\rC(\ol{V})$ by any isomorphism. 
Then it is isomorphic to $\cA_W$. But by Corollary \ref{cor:algiso_biholo}, 
this isomorphism is given by 
composition with a holomorphic map. 
Therefore $\cA_V$ is isomorphic to $\rC(\ol{V})$ via the restriction map---a contradiction.
\end{proof}

\begin{remark}
In \cite{DRS}, now improved by \cite{Hartz}, 
we saw that in the case of homogeneous varieties 
$V$ and $W$, the algebras $\cA_V$ and $\cA_W$ are isomorphic
if and only if the algebras $\cM_V$ and $\cM_W$ are isomorphic. 
The above discussion shows that this is not true in general.  
If $V$ and $W$ are two interpolating sequences in $\mb{D}$,
then $\cM_V$ and $\cM_W$ are both isomorphic to $\ell^\infty$, 
whereas the isomorphism classes of $\cA_V$ and $\cA_W$ 
depend on the structure of the limit sets.
\end{remark}

\bibliographystyle{amsplain}

\end{document}